\newcommand{\scal}[2]{\langle #1,#2\rangle}
\newcommand{\nn}[1]{\mathbf N^{#1}}
\newcommand{\rr}[1]{\mathbf R^{#1}}
\newcommand{\zz}[1]{\mathbf Z^{#1}}
\newcommand{\nm}[2]{\Vert #1\Vert _{#2}}
\newcommand{\Op}[1]{\operatorname{Op}(#1)}
\newcommand{\sets}[2]{\{ \, #1\, ;\, #2\, \} }
\newcommand{\ep}{\varepsilon}
\newcommand{\fy}{\varphi}
\newcommand{\cdo}{\, \cdot \, }
\newcommand{\supp}{\operatorname{supp}}
\newcommand{\eabs}[1]{\langle #1\rangle}     %%%%%   for <x>
\newcommand{\vrum}{\vspace{0.1cm}}
\newcommand{\op}{\operatorname{Op}}
\newcommand{\SG}{\operatorname{SG}}
\newcommand{\norm}[1]{\langle#1\rangle}
\def\cB{{\mathcal B}}
\def\cD{{\mathscr D}}
\def\cF{{\mathscr F}}
\def\cL{{\mathcal L}}
\def\cS{{\mathscr S}}
\def\SG{\operatorname{SG}}
\def\mascP{\mathscr P}
\numberwithin{equation}{section}          %Detta g?r att man f?r
\newtheorem{thm}{Theorem}
\numberwithin{thm}{section}
\newcommand{\rubrik}{}
\newtheorem{prop}[thm]{Proposition}
\newtheorem{lemma}[thm]{Lemma}
\theoremstyle{definition}
\newtheorem{defn}[thm]{Definition}
\theoremstyle{remark}
\newtheorem{rem}[thm]{Remark}
\def\zz#1{\mathbf{Z}^{#1}}
\def\Ph{\mathfrak{F}}
\def\Phr{\mathfrak{F}^r}
\def\binomial#1#2{{#1 \choose #2}}
\def\Div{\mathscr{D}}
\def\Sym#1{\mathrm{Sym}\left(#1\right)}
\author{Sandro Coriasco}
\address{Dipartimento di Matematica ``G. Peano'', Università degli
Studi di Torino, Torino, Italy}
\email{sandro.coriasco@unito.it}
\author{Joachim Toft}
\address{Department of Computer science, Physics and Mathematics,
Linn{\ae}us University, V{\"a}xj{\"o}, Sweden}
\email{joachim.toft@lnu.se}
\title{Calculus for Fourier Integral Operators\\in generalized $\mathbf{SG}$ classes}
\keywords{Fourier Integral Operator, Weyl-h\"ormander calculus,
micro-local analysis}
\subjclass[2010]{35A18, 35S30, 42B05, 35H10}
\begin{document}

\begin{abstract}
We construct a calculus for generalized $\SG$
Fourier integral operators, extending known results to a
broader % changed wider to broader /JT
class of symbols of $\SG$ type. In particular, we do 
not require that the phase functions are homogeneous.
%
% Suggest that the following part should be omitted /JT
%
An essential ingredient in the proofs is a general criterion for 
asymptotic expansions within the Weyl-H\"ormander calculus, 
which we previously proved.
%
% ....to here /JT
%
We also prove the $L^2(\rr{d})$-boundedness
of the generalized $\SG$ Fourier integral operators 
having regular phase functions and amplitudes uniformly bounded on $\rr{2d}$. 
\end{abstract}

\maketitle

\setcounter{tocdepth}{1}

\tableofcontents

%%%%%%%%%%%%%%%%%%%%%%%%%%%%%%%%%%%%
\section{Introduction}\label{sec0}
%%%%%%%%%%%%%%%%%%%%%%%%%%%%%%%%%%%%
%

\par

The aim of this paper is to extend the calculus of Fourier integral
operators based on the so-called $SG$ symbol classes, originally
studied by S. Coriasco \cite{coriasco}, to the more general setting
of generalized $\SG$ symbols introduced in \cite{CJT2} by
S. Coriasco, K. Johansson and J. Toft.

\par

Explicitly, for every $m,\mu \in \mathbf R$,  the standard class
$\SG ^{m,\mu}(\rr{d})$ of $\SG$ symbols, are functions 
$a(x,\xi) \in C^\infty(\rr d\times\rr d)$
with the property
that, for any multiindices $\alpha \in \nn d$ and $\beta \in \nn d$, there exist
constants $C_{\alpha\beta}>0$ such that% the estimates 
\begin{equation}
	\label{disSG}
	|D_x^{\alpha}D_\xi^{\beta} a(x, \xi)| \leq C_{\alpha\beta} 
	\langle x\rangle^{m-|\alpha|}\langle\xi\rangle^{\mu-|\beta|},
	\qquad x, \xi \in \rr d \times \rr d.
\end{equation}
hold.
Here $\langle x \rangle=(1+|x|^2)^{1/2}$ when $x\in\rr d$ and $\mathbf N$
is the set of natural numbers.
These classes together with corresponding
classes of pseudo-differential operators 
$\op (\SG ^{m,\mu})$,
were first introduced in
the '70s by H.O.~Cordes \cite{Co} and C.~Parenti
\cite{PA72}. See also R.~Melrose \cite{Me}. They form a
graded algebra, i.{\,}e.,
$$
\op (\SG ^{m_1,\mu _1})\circ \op (\SG ^{m_2,\mu _2})
\subseteq \op (\SG ^{m_1+m_2,\mu _1+\mu _2}),
$$
whose residual
elements are operators with symbols in
\[
	 \SG ^{-\infty,-\infty}(\rr d)= \bigcap_{(m,\mu) \in \rr 2} \SG ^{m,\mu} (\rr d)
	 =\cS(\rr {2d}),
\]
that is, those having kernel in $\cS(\rr {2d})$, continuously
mapping  $\cS^\prime(\rr d)$ to $\cS(\rr d)$.

\par

Operators in $\op (\SG ^{m,\mu})$
are continuous on $\cS(\rr d)$, and extend uniquely to
continuous operators on $\cS^\prime(\rr d)$ and from
$H^{s,\sigma}(\rr d)$ to $H^{s-m,\sigma-\mu}(\rr d)$.
Here $H^{t,\tau}(\rr d)$,
$t,\tau \in \mathbf R$, denotes the weighted Sobolev space
\begin{equation*}
  	H^{t,\tau}(\rr d)= \{u \in \cS^\prime(\rr {d}) \colon \|u\|_{t,\tau}=
	\|\norm x^t\norm D^\tau u\|_{L^2}< \infty\},
\end{equation*}

\par

An operator $A=\Op{a}$, is called \emph{elliptic}
(or $\SG ^{m,\mu}$-\emph{elliptic}) if
$a\in \SG ^{m,\mu} (\rr d)$ and there
exists $R\ge0$ such that
%and
%
\[
	C\norm{x}^{m} \norm{\xi}^{\mu}\le |a(x,\xi)|,\qquad 
	|x|+|\xi|\ge R,
\] 
for some constant $C>0$.

\par

An elliptic $\SG $ operator $A \in \op (\SG ^{m,\mu})$ admits a
parametrix $P\in \op (\SG ^{-m,-\mu})$ such that
\[
PA=I + K_1, \quad AP= I+ K_2,
\]
for suitable $K_1, K_2$, smoothing operators with symbols in
$\SG^{-\infty,-\infty}(\rr{d})$, and it turns out to be a Fredholm
operator on the scale of functional spaces $H^{t,\tau}(\rr d)$,
$t,\tau\in\mathbf R$.

\par

In 1987, E.~Schrohe \cite{Sc} introduced a class of non-compact
manifolds, the so-called $\SG$ manifolds, on which it is possible
to transfer from $\rr d$ the whole $\SG$ calculus.
These are manifolds which admit a finite atlas whose changes of
coordinates behave like symbols of order $(0,1)$ (see \cite{Sc}
for details and additional technical hypotheses). An especially
interesting example of  $\SG$ manifolds are the manifolds with
cylindrical ends, where also the concept of classical $\SG$
operator makes sense,
see, e.{\,}g. \cite{BaCo,CoSc14,CoMa2,ES97,MP02,Me}.

\par

The calculus of corresponding classes of Fourier integral
operators, in the forms
\begin{align*}
f\mapsto (\op _\fy (a)f)(x)&= (2\pi )^{-d}\int _{\rr d}e^{i\fy (x,\xi )} a(x,\xi )
\widehat f(\xi )\, d\xi ,
\intertext{and}
f\mapsto (\op _\fy (a)^*f)(x)&= (2\pi )^{-d}\iint _{\rr {2d}}
e^{i(\scal x\xi -\fy (y,\xi ))} \overline {a(y,\xi )} f(y)\, dyd\xi,
\end{align*}
$f\in\cS(\rr{d})$,
started in \cite{coriasco}. Here the operators $\op _\fy (a)$ and
$\op _\fy ^*(a)=\op _\fy (a)^*$ are called Fourier integral operators
of type I and type II, respectively, with amplitude $a$ and
phase function $\fy$. Note that the type II operator
$\op_\fy(a)^*$ is the formal $L^2$-adjoint of the 
type I operator $\op_\fy(a)$.

We assume that the phase function $\fy$ belongs
$\SG ^{1,1}(\rr d)$ and satisfy
\begin{equation}\label{phasecondCones}
\eabs {\fy ' _x(x,\xi )}\asymp \eabs \xi \quad \text{and}\quad
\eabs {\fy ' _\xi (x,\xi )}\asymp \eabs x,
\end{equation}
if nothing else is stated. Here and in what follows, $A\asymp B$
means that $A\lesssim B$ and $B\lesssim A$,
where $A\lesssim B$ means that
$A\le c\cdot B$, for a suitable constant $c>0$.
In many cases, especially when studying the
mapping properties of such operators,
$\fy$ should also fulfill the usual (global)
non-degeneracy condition
$$
|\det (\fy '' _{x\xi}(x,\xi ))|\ge c ,\qquad x,\xi \in \rr d,
$$
for some constant $c>0$. 
The calculus developed in \cite{coriasco} has been first applied to
the analysis of the well-posedness, in the scale of weighted
spaces $H^{t,\tau}(\rr{d})$, of certain hyperbolic Cauchy problems.
These involved linear operators whose coefficients have, at most, 
polynomial growth at infinity, and was studied in \cite{coriasco2}.
 
 \par

The analysis of such Fourier integral operators subsequently
developed into an interesting 
field of active research, with extensions in many different directions.
For example, an approach involving more
general phase functions compared to \cite{coriasco, coriasco2}
can be found in \cite{AndPhD} by Andrews. In \cite{CaRo}
Cappiello and Rodino deduce results involving Gelfand-Shilov
spaces, and in \cite{CNR07, CorNicRod1}, boundedness
on $\mathcal{F}{L^p(\rr d)} _\mathrm{comp}$ and the
modulation spaces are obtained.
Furthermore, these results are applied in \cite{CoMa2}
to obtain Weyl formulae for the asymptotic behavior of the counting
function for elliptic self-adjoint operators of $\SG$ type, with positive
orders, on manifolds with ends, through (a variant of) the so-called
stationary phase method. The $L^p(\rr{d})$-continuity
of the above operators is studied in \cite{CoRu}, extending
to the global $\rr d$ situation a celebrated result by Seeger,
Sogge and Stein in \cite{SSS91}, valid on compact manifolds.

\par

More general $\SG$ symbol classes, denoted by
$\SG^{(\omega)}_{r,\rho}(\rr d)$, $r,\rho\ge0$, $r+\rho>0$,
have been introduced in the aforementioned paper \cite{CJT2}.
In place of the estimates \eqref{disSG}, $a\in\SG^{(\omega)}
_{r,\rho}(\rr d)$ satisfies
\begin{equation}\label{eq:genSG}
		|D_x^{\alpha}D_\xi^{\beta} a(x, \xi)| \leq C_{\alpha\beta} 
		\omega(x,\xi)\langle x\rangle^{-r|\alpha|}\langle\xi\rangle
		^{-\rho|\beta|}
\end{equation}
for suitable weight $\omega$ and constants $C_{\alpha\beta}>0$,
see Subsections \ref{subs:1.1} and \ref{subs:1.3} below. For the
corresponding pseudo-differential operators, continuity results
and the propagation of singularities, in terms of global wave-front
sets are established in \cite{CJT2,CJT3}. (See also
\cite{Me,CoMa} for related results.) These generalized
$\SG$ symbol classes are well suited when investigating
singularities in the context of modulation and
Fourier-Lebesgue spaces. (See
\cite{F1,Feichtinger3, Feichtinger6} for details on these
functional spaces.)

\medspace

\par

In Section \ref{sec2} we extend the calculus developed in \cite{coriasco}
to include operators $\op_\fy(a)$ and $\op_\fy^*(a)$ with phase functions
in $\SG^{1,1}_{1,1}(\rr{d})$ and amplitudes in the 
generalized $\SG$ classes \eqref{eq:genSG}. 
More precisely, in the first part of Section \ref{sec2} we prove that 
for every $a,b\in \SG ^{(\omega _0)}$ and $p\in \SG ^{(\omega )}$
we have
\begin{align*}
\op (p)\circ \op _\fy (a) &= \op _\fy (c_1) \mod \op (\cB _0),
\\[1ex]
\op (p)\circ \op _\fy ^*(b) &= \op _\fy ^*(c_2) \mod \op (\cB _0),
\\[1ex]
\op _\fy (a) \circ \op (p) &= \op _\fy (c_3) \mod \op (\cB _0)
\\[1ex]
\op _\fy ^*(b) \circ \op (p) &= \op _\fy ^*(c_4) \mod \op (\cB _0),
\end{align*}
for some $c_j\in \SG ^{(\omega_{0,j})}$, $j=1,\dots ,4$, and suitable weights
$\omega_{0,j}$. Here $\op (\cB _0)$
is a set of appropriate smoothing operators, depending on the 
symbols and the phase function. Furthermore,
if $a\in \SG ^{(\omega _1)}$ and $b\in \SG ^{(\omega _2)}$, then it is also
proved that $\op _\fy ^*(b)\circ \op _\fy (a)$ and $\op _\fy(a)\circ \op _\fy^*
(b)$ are equal to pseudo-differential operators $\op (c_5)$ and $\op (c_6)$,
respectively, for some $c_5,c_6\in \SG ^{(\omega _{0,j})}$, $j=5,6$.
We also present asymptotic formulae for $c_j$, $j=1,\dots ,6$,
in terms of $a$ and $b$, or of
$a$, $b$ and $p$, modulo smoothing terms,
with symbol which in most cases
belong to
$\SG^{-\infty,-\infty} = \mathscr S$. %

\par

The results shown in this paper are an essential part of  
the study of the propagation of singularities,
in the context of general modulation spaces, from the data 
to the solutions of the Cauchy problems considered in \cite{coriasco2,CoMa}. 
Another application of the calculus developed here
has been the proof of boundedness results between
suitable couples of weighted modulation spaces for
the class of Fourier integral operators studied here. Both these applications
are examined in \cite{CJT4}.

\par

The paper is organized as follows. In Section \ref{sec1} we recall the
needed definitions and some basic results concerning the generalized
$\SG$ symbol classes. In Section \ref{sec2} we give the definition
of the generalized $\SG$ Fourier integral operators, and prove our main 
results, i.{\,}e., the composition theorems between generalized pseudo-differential
and generalized Fourier integral operators of $\SG$ type, as well as
between the Fourier integral operators. The parametrices for the elliptic elements
are also studied, together with an adapted version of the Egorov theorem.
%
% Concerning what is described next about Section 3: Should these studies really be
% done here, in this paper? For me it seems that these results are for the next paper. /JT
%
In Section \ref{sec3} we discuss the global $L^2(\rr d)$-boundedness of
the generalized $\SG$ Fourier integral operators under the hypotheses 
that the phase function is regular, see Subsection \ref{subs:2.1} below, and the
amplitude is uniformly bounded on $\rr{2d}$.
%
% For me it seems that the formulas in the Appendix on asymptotic expansion in SG-classes
% are explicitly given in our paper on asymptotic expansion. If so, then I suggest
% that we skip this appendix and refer to our results in our paper. /JT
%
%Finally, in the Appendix we
%summarize the results on asymptotic expansions within the general
%Weyl-H\"ormander calculus, that we proved in \cite{CoTo}, which are needed
%in the analysis of the composition results in Section \ref{sec2}.

\par

%%%%%%%%%%%%%%%%%%%%%%%%%%%%%%%%%%%%
\section{Preliminaries}\label{sec1}
%%%%%%%%%%%%%%%%%%%%%%%%%%%%%%%%%%%%
%

\par

We begin by fixing the notation and recalling some basic concepts
which will be needed below.
%The material in
In Subsections
\ref{subs:1.1}-\ref{subs:1.3} we mainly summarizes part of the contents
of Sections 2 in \cite{CJT2}, and
%comes from
in \cite{CJT3}. In Subsection
\ref{subs:1.4} we
%then
state a few lemmas which
will be useful in the subsequent Section \ref{sec2}. Some of these,
compared with their original formulation in the $\SG$ context,
appeared in \cite{coriasco}, are here given in a more general
form, adapted to the definitions given in Subsection \ref{subs:1.3}.
%In
%the sequel, we write $A\lesssim B$ when there exists a 
%constant $C>0$ such that $A\le C\cdot B$. We also use the notation
%$A\asymp B$ when
%$A\lesssim B\lesssim A$.

\par
\subsection{Weight functions}\label{subs:1.1}
Let $\omega$ and $v$ be positive measurable functions
on $\rr d$. Then $\omega$ is called $v$-moderate if
\begin{equation}\label{moderate}
\omega (x+y) \lesssim \omega (x)v(y)
\end{equation}
If $v$ in \eqref{moderate} can be chosen as a polynomial, then $\omega$ is
called a function or weight of \emph{polynomial type}.
We let $\mathscr P(\rr d)$ be the set
of all polynomial type functions on $\rr d$. If $\omega (x,\xi
)\in \mathscr P(\rr {2d})$ is constant with respect to the
$x$-variable or the $\xi$-variable, then we sometimes write $\omega
(\xi )$, respectively $\omega (x)$, instead of $\omega (x,\xi )$,
%In this case we consider
and consider
$\omega$ as an element in $\mathscr P(\rr {2d})$ or in $\mathscr P(\rr
d)$ depending on the situation. We say that $v$ is submultiplicative
if \eqref{moderate} holds for $\omega=v$. For convenience we assume
that all submultiplicative weights are even, and
%we always let
$v$ and $v_j$ always stand for submultiplicative weights, if nothing else is stated.

\par

Without loss of generality we may assume that every $\omega \in
\mathscr P(\rr d)$ is smooth and satisfies the ellipticity condition
$\partial ^\alpha \omega / \omega \in L^\infty$. In fact,
by Lemma 1.2 in \cite {To8} it follows
that for each $\omega \in \mathscr P(\rr d)$, there is a smooth and
elliptic $\omega _0\in \mathscr P(\rr d)$ which is equivalent to
$\omega$ in the sense
\begin{equation}\label{onestar}
\omega \asymp \omega _0.
\end{equation}

\par

The weights involved in the sequel have to satisfy additional conditions.
More precisely let $r,\rho \ge0$. Then $\mathscr P_{r,\rho}(\rr {2d})$ is
the set of all $\omega (x,\xi )$ in $\mathscr P(\rr {2d})\bigcap$ $C^\infty (\rr
{2d})$ such that
\begin{equation}\label{SGCondWeight}
\eabs x^{r|\alpha |}\eabs \xi ^{\rho |\beta |}\frac {\partial ^\alpha
_x\partial ^\beta _\xi \omega (x,\xi )}{\omega (x,\xi )}\in L^\infty
(\rr {2d}),
\end{equation}
for every multi-indices $\alpha$ and $\beta$. Any weight
$\omega\in\mascP_{r,\rho}(\rr {2d})$ is then called $\SG$ moderate on
$\rr{2d}$, of order $r$ and $\rho$. Note that $\mathscr P_{r,\rho}$
is different here compared to \cite{CJT1}, and
%that
there are elements in $ \mathscr P(\rr {2d})$ which have
no equivalent elements in $\mathscr P_{r,\rho}(\rr {2d})$.
On the other hand, if $s, t\in
\mathbf R$ and $r, \rho \in [0,1]$, then $\mathscr P_{r,\rho} (\rr {2d})$
contains all weights of the form
\begin{equation}\label{varthetaWeights}
\vartheta _{m,\mu} (x,\xi )\equiv \eabs x ^m\eabs \xi ^\mu ,
\end{equation}
which are one of the most common type of weights.
%in the applications.

\par

It will also be useful to consider $\SG$ moderate weights in
one or three sets of variables. Let $\omega \in \mathscr
P(\rr {3d})\bigcap C^\infty (\rr {3d})$, and let $r_1,r_2,\rho
\ge 0$. Then $\omega$ is called $\SG$ moderate on $\rr{3d}$,
of order $r_1$, $r_2$ and $\rho$, if  it fulfills
$$
\eabs {x_1}^{r_1|\alpha _1|} \eabs {x_2}^{r_2|\alpha _2|} \eabs
\xi ^{\rho |\beta |} \frac{\partial _{x_1}^{\alpha _1} \partial _{x_2}
^{\alpha _2} \partial _\xi ^\beta \omega (x_1,x_2,\xi )}
       {\omega (x_1,x_2,\xi )}
\in L^\infty(\rr {3d}).
$$
The set of all $\SG$ moderate weights on $\rr{3d}$ of order
$r_1$, $r_2$ and $\rho$ is denoted by $\mascP _{r_1,r_2,
\rho}(\rr{3d})$. Finally, we denote by $\mascP_{r}(\rr{d})$ the
set of all $\SG$ moderate weights of order $r\ge0$ on $\rr d$,
which are defined in a %completely
similar fashion.

\par

\subsection{Pseudo-differential operators and generalized $\SG$
symbol classes}\label{subs:1.3}
%
%Next we recall some facts in Chapter XVIII in \cite {Ho1}
%concerning pseudo-differential operators.
Let $a\in
\mathscr S(\rr {2d})$, and $t\in \mathbf R$ be fixed. Then
the pseudo-differential operator $\op _t(a)$ is the linear and
continuous operator on $\mathscr S(\rr d)$ defined by the formula
\begin{equation}\label{e0.5}
(\op _t(a)f)(x)
=
(2\pi ) ^{-d}\iint e^{i\scal {x-y}\xi } a((1-t)x+ty,\xi )f(y)\,
dyd\xi 
\end{equation}
(cf. Chapter XVIII in \cite {Ho1}). For
general $a\in \mathscr S'(\rr {2d})$, the pseudo-differential
operator $\op _t(a)$ is defined as the continuous operator from
$\mathscr S(\rr d)$ to $\mathscr S'(\rr d)$ with distribution
kernel
\begin{equation}\label{weylkernel}
K_{t,a}(x,y)=(2\pi )^{-d/2}(\mathscr F_2^{-1}a)((1-t)x+ty,x-y).
\end{equation}
Here and in what follows, $\cF f =\widehat f$ is the Fourier transform
of $f\in \cS '(\rr d)$ which takes the form
$$
(\cF f)(\xi )= \widehat f(\xi ) = (2\pi )^{-d/2}\int _{\rr d}f(x)e^{-i\scal x\xi }\, dx
$$
when $f\in \cS (\rr d)$, and $\mathscr F_2F$ is the partial Fourier
transform of $(x,y)\mapsto F(x,y)$ with respect to the $y$-variable.
%
% I have added the definition of Fourier transform. I have chosen
% the L^2-normalized version, since this seems to be the most
% common one in our considerations (we need to check that everything
% is consistent with this definition). /JT
%
%
%

\par

If  $t=0$, then $\op _t(a)$ is the Kohn-Nirenberg
representation $\op (a)=a(x,D)$, and if $t=1/2$, then $\op _t(a)$ is
the Weyl quantization. 

\par

%We now recall the definition of the generalized $\SG$ symbol classes.
In most of our situations, $a$ belongs to a generalized $\SG$ symbol
class, which we shall consider now.
Let $m,\mu ,r, \rho \in \mathbf R$
be fixed. Then the $\SG$ class $\SG
^{m,\mu }_{r,\rho}(\rr {2d})$ is the set of all $a\in
C^\infty (\rr {2d})$ such that
$$
|D _x^\alpha D _\xi ^\beta a(x,\xi )|\lesssim
\eabs x^{m-r|\alpha|}\eabs \xi ^{\mu -\rho |\beta
|},
$$
for all multi-indices $\alpha$ and $\beta$. Usually we assume that
$r,\rho \ge 0$ and $\rho +r >0$.

\par

More generally, assume that $\omega \in \mathscr P_{r ,\rho}
(\rr {2d})$. Then $\SG _{r,\rho}^{(\omega )}(\rr {2d})$ consists of all $a\in
C^\infty (\rr {2d})$ such that
\begin{equation}\label{Somegadef}
	|D_x^\alpha D_\xi ^\beta a(x,\xi)|\lesssim \omega(x,\xi)\eabs
        x^{-r|\alpha|}\eabs \xi ^{-\rho|\beta|}, \qquad
        x,\xi\in \rr d,
\end{equation}
for all multi-indices $\alpha$ and $\beta$. We note that
\begin{equation}\label{SG}
\SG _{r,\rho}^{(\omega )}(\rr {2d})=S(\omega ,g_{r,\rho}
),
\end{equation}
when $g=g_{r,\rho}$ is the Riemannian metric on $\rr {2d}$,
defined by the formula
\begin{equation}\label{riemannianmetric}
\big (g_{r,\rho}\big )_{(y,\eta )}(x,\xi ) =\eabs y ^{-2r}|x|^2 +\eabs
\eta ^{-2\rho}|\xi |^2
\end{equation}
(cf. Section 18.4--18.6 in \cite{Ho1}). Furthermore, $\SG ^{(\omega
)}_{r,\rho} =\SG ^{m,\mu }_{r,\rho}$ when $\omega$ coincides with
the weight  $\vartheta _{m,\mu}$ defined in \eqref{varthetaWeights}.

\par

For conveniency we set
\begin{gather*}
\SG ^{(\omega \vartheta _{-\infty ,0} )} _\rho (\rr {2d})
= \SG ^{(\omega \vartheta _{-\infty ,0} )} _{r,\rho} (\rr {2d})
\equiv
\bigcap _{N\ge 0} \SG ^{(\omega \vartheta _{-N ,0} )} _{r,\rho} (\rr {2d}),
\\[1ex]
\SG ^{(\omega \vartheta _{0,-\infty } )} _r (\rr {2d})
= \SG ^{(\omega \vartheta _{0,-\infty } )} _{r,\rho} (\rr {2d})
\equiv
\bigcap _{N\ge 0} \SG ^{(\omega \vartheta _{0,-N } )} _{r,\rho} (\rr {2d}),
\intertext{and}
\SG ^{(\omega \vartheta _{-\infty ,-\infty } )}  (\rr {2d})
= \SG ^{(\omega \vartheta _{-\infty ,-\infty } )} _{r,\rho} (\rr {2d})
\equiv
\bigcap _{N\ge 0} \SG ^{(\omega \vartheta _{-N ,-N} )} _{r,\rho} (\rr {2d}).
\end{gather*}
We observe that $\SG ^{(\omega \vartheta _{-\infty ,0} )} _{r,\rho} (\rr {2d})$
is independent of $r$, $\SG ^{(\omega \vartheta _{0,-\infty } )} _{r,\rho} (\rr {2d})$
is independent of $\rho$, and that
$\SG ^{(\omega \vartheta _{-\infty ,-\infty } )} _{r,\rho} (\rr {2d})$ is independent
of both $r$ and $\rho$. Furthermore, for any $x_0,\xi _0\in \rr d$ we have
\begin{alignat*}{3}
\SG ^{(\omega \vartheta _{-\infty ,0} )} _\rho (\rr {2d}) &=
\SG ^{(\omega _0\vartheta _{-\infty ,0} )} _\rho (\rr {2d}),&
\quad &\text{when}& \quad \omega _0(\xi ) &= \omega (x_0,\xi ),
\\[1ex]
\SG ^{(\omega \vartheta _{0,-\infty } )} _r (\rr {2d}) &=
\SG ^{(\omega _0\vartheta _{0,-\infty } )} _r (\rr {2d}),&
\quad &\text{when}& \quad \omega _0(x ) &= \omega (x,\xi _0),
\intertext{and}
\SG ^{(\omega \vartheta _{-\infty ,-\infty } )}  (\rr {2d}) &=
\mathscr S(\rr {2d}).& & & &
\end{alignat*}
%%

%\vspace{0.5cm}
%
%================================
%
%\vspace{0.5cm}
%
%\textbf{A question:} Perhaps it should be better to use the notation something like
%$$
%\SG _{r,\rho}(\omega ,\rr {2d})
%$$
%instead of
%$$
%\SG _{r,\rho}^{(\omega )}(\rr {2d}).
%$$
%
%
%
%\vspace{0.5cm}
%
%
%================================
%
%\vspace{0.5cm}
%

\par

The following result shows that
the concept of asymptotic expansion extends to the classes
$\SG^{(\omega)}_{r,\rho}(\rr{2d})$. We refer to
\cite[Theorem 8]{CoTo} for the proof.% (see also the Appendix).

\par

\begin{prop}\label{propasymp}
Let $r,\rho \ge 0$ satisfy $r+\rho >0$, and let
$\{ s_j \} _{j\ge 0}$ and $\{ \sigma _j \} _{j\ge 0}$ be sequences of
non-positive numbers
such that $\lim _{j\to \infty} s_j =-\infty$ when $r>0$ and $s_j=0$ otherwise,
and $\lim _{j\to \infty} \sigma _j =-\infty$ when $\rho >0$ and $\sigma _j=0$
otherwise. Also let $a_j\in\SG ^{(\omega_j)}_{r,\rho}(\rr{2d})$, $j=0,1,\dots$,
where $\omega_j=\omega \cdot \vartheta_{s_j,\sigma_j}$. Then there is a
symbol $a\in\SG^{(\omega)}_{r,\rho}(\rr{2d})$ such that
\begin{equation}
	\label{eq:sgassum}
	a-\sum_{j=0}^Na_j\in\SG^{(\omega_{N+1})}_{r,\rho}(\rr{2d}).
\end{equation}
The symbol $a$ is uniquely determined modulo a remainder $h$, where
\begin{equation}
	\label{eq:gensgasexp}
	\begin{alignedat}{3}
		h &\in \SG ^{\omega \vartheta _{-\infty ,0})}_{\rho}(\rr{2d}) & \quad
		&\text{when}& \quad r&>0,
		\\[1ex]
		h &\in \SG^{(\omega \vartheta _{0,-\infty} )}_{r}(\rr{2d}) & \quad
		&\text{when} & \quad \rho &> 0,
		\\[1ex]  
		h &\in \mathscr S(\rr{2d}) & \quad &\text{when} &\quad r &> 0,
		\rho >0.
	\end{alignedat}
\end{equation}
\end{prop}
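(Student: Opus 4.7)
The plan is to adapt the classical Borel summation argument to the two-parameter $\SG$ setting: the symbol $a$ is constructed by excising a suitable neighborhood of the origin from each $a_j$ via smooth cutoffs chosen large enough so that the series $\sum_j (1-\chi_j)a_j$ converges in every seminorm of $\SG^{(\omega)}_{r,\rho}(\rr{2d})$.

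Fix $\chi\in C_0^\infty(\rr d)$ with $\chi\equiv 1$ near $0$ and $0\le\chi\le 1$. The condition $r+\rho>0$ forces me to distinguish three subcases. If $r>0$ and $\rho>0$, set $\chi_j(x,\xi)=\chi(x/t_j)\chi(\xi/\tau_j)$; if $r=0$ (so necessarily $s_j\equiv 0$ by the hypothesis), set $\chi_j(x,\xi)=\chi(\xi/\tau_j)$, and symmetrically when $\rho=0$. Then I define
\begin{equation*}
a(x,\xi)=\sum_{j=0}^\infty \big(1-\chi_j(x,\xi)\big)\,a_j(x,\xi),
\end{equation*}
and pick $t_j,\tau_j\nearrow\infty$ by a standard diagonal procedure so that, for every fixed $N$, each seminorm of the tail $\sum_{j\ge N+1}(1-\chi_j)a_j$ in $\SG^{(\omega_{N+1})}_{r,\rho}$ is finite. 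The key estimate is that on $\supp(1-\chi_j)$ one of $\eabs x\gtrsim t_j$ or $\eabs \xi\gtrsim \tau_j$ must hold, so the nonpositive powers $\eabs x^{s_j-s_{N+1}}$, $\eabs\xi^{\sigma_j-\sigma_{N+1}}$ contribute arbitrarily small constants that allow the $j$-th term to be transferred from its natural class $\SG^{(\omega_j)}_{r,\rho}$ into $\SG^{(\omega_{N+1})}_{r,\rho}$.

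With the cutoffs in place, the asymptotic expansion \eqref{eq:sgassum} reduces to observing that
\begin{equation*}
a-\sum_{j=0}^N a_j = -\sum_{j=0}^N \chi_j\,a_j+\sum_{j=N+1}^\infty (1-\chi_j)\,a_j
\end{equation*}
lies in $\SG^{(\omega_{N+1})}_{r,\rho}(\rr{2d})$: the tail by the preceding step, and each $\chi_j a_j$ with $j\le N$ either has compact support in both variables (so belongs to $\mathscr S\subset \SG^{(\omega_{N+1})}_{r,\rho}$) or has $\eabs x$ bounded (resp. $\eabs\xi$ bounded), which places it in the appropriate class of \eqref{eq:gensgasexp} and, a fortiori, in $\SG^{(\omega_{N+1})}_{r,\rho}$.

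Uniqueness is a direct consequence: if $a'$ is another such symbol, then $a-a'\in\bigcap_{N\ge 0}\SG^{(\omega_{N+1})}_{r,\rho}(\rr{2d})$. Since $s_N\to-\infty$ when $r>0$ and $\sigma_N\to-\infty$ when $\rho>0$, this intersection coincides with $\SG^{(\omega\vartheta_{-\infty,0})}_{\rho}$, $\SG^{(\omega\vartheta_{0,-\infty})}_{r}$, or $\mathscr S$, matching the three alternatives in \eqref{eq:gensgasexp}. The principal technical hurdle I foresee is the boundary case $r=0$ (or $\rho=0$): there the conical cutoff used in the original $\SG$ setting of \cite{coriasco} is not available, and one must force decay along a single axis while keeping all estimates compatible with the weight class $\mascP_{r,\rho}(\rr{2d})$, which is precisely why the cutoff above is chosen to depend only on the variable whose $\vartheta$-exponent actually tends to $-\infty$.
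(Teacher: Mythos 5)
Your construction is correct, and it is the classical excision (Borel--type) argument carried out directly in the bi-graded $\SG$ setting. The paper, however, does not argue this way at all: it gives no self-contained proof and instead invokes the general criterion for asymptotic expansions in the Weyl--H\"ormander calculus from \cite[Theorem 8]{CoTo}, applied through the identification \eqref{SG} of $\SG^{(\omega)}_{r,\rho}(\rr{2d})$ with $S(\omega,g_{r,\rho})$ for the split metric \eqref{riemannianmetric}. The two routes buy different things. Your direct argument is elementary and makes the trichotomy in \eqref{eq:gensgasexp} transparent: the excision can only force decay in the variable whose $\vartheta$-exponent actually tends to $-\infty$, which is exactly why the remainder class degrades to $\SG^{(\omega\vartheta_{-\infty,0})}_{\rho}$, to $\SG^{(\omega\vartheta_{0,-\infty})}_{r}$, or improves to $\mathscr S(\rr{2d})$ in the three cases. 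The paper's route is uniform over all H\"ormander metrics and keeps the machinery in one place; the same external results are reused later (the proof of Theorem \ref{thm:0.1} closes by appealing to \cite[Corollary 16]{CoTo}), so quoting \cite{CoTo} here is the economical choice.

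Two points in your sketch deserve to be made explicit. First, you tacitly assume the sequences $\{s_j\}$ and $\{\sigma_j\}$ are non-increasing when you declare $\eabs x^{s_j-s_{N+1}}\eabs \xi^{\sigma_j-\sigma_{N+1}}$ to be a product of non-positive powers for $j\ge N+1$; this is in fact forced, since for a non-monotone sequence the conclusion \eqref{eq:sgassum} can fail outright (e.g.\ $a_{N+2}$ need not belong to $\SG^{(\omega_{N+1})}_{r,\rho}(\rr{2d})$ if $s_{N+2}>s_{N+1}$), so it is the intended reading of the statement --- but it should be stated. Second, when derivatives fall on the cutoffs, the bound $t_j^{-|\gamma|}\lesssim\eabs x^{-r|\gamma|}$ on the set $|x|\asymp t_j$ (and its analogue in $\xi$) uses $r\le 1$ and $\rho\le 1$; this is the regime in which the paper actually works, but for larger exponents the dilations in the excision functions would have to be adapted. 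Neither point is a genuine gap.
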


\par

\begin{defn}\label{def:gensgasexp}
The notation $a\sim \sum a_j$ is used when $a$ and $a_j$
fulfill the hypothesis in Proposition \ref{propasymp}. Furthermore,
the formal sum
$$
\sum_{j\ge0}a_j
$$
is called (\emph{generalized $\SG$}) \emph{asymptotic expansion}.
\end{defn}

\par

It is a well-known fact that $\SG$ operators give rise to linear continuous mappings 
from $\mathscr S(\rr d)$ to itself, extendable as linear continuous mappings
from $\mathscr S'(\rr d)$ to itself. They also act continuously between general
weighted modulation spaces, see \cite{CJT2}. 

\par

\subsection{Composition and further properties of $\SG$ classes of
symbols, amplitudes, and functions}\label{subs:1.4}
We define families of \emph{smooth functions with $\SG$ behaviour}, depending on
one, two or three sets of real variables (cfr. also \cite{CoSch}). We then introduce
pseudo-differential operators defined by means of $\SG$ amplitudes. Subsequently,
we recall sufficient conditions for maps of $\rr{d}$ into itself
to keep the invariance of the $\SG$ classes. 

%
% Ampiezze e simboli
%

\par

In analogy of $\SG$ amplitudes defined on $\rr {2d}$, we consider
corresponding classes of amplitudes defined on $\rr {3d}$. More precisely,
for any $m_1, m_2, \mu, r_1,r_2,\rho \in \mathbf R$,
let $\SG ^{m_1,m_2,\mu }_{r_1,r_2,\rho} (\rr{3n})$ be the set of all
%\emph{amplitude functions}, that is, the set of all
$a \in C^\infty \left( \rr{3d} \right )$ such that
%which satisfy the condition
%
%
\begin{equation}
\label{eq:1.1}       
                  |\partial ^{\alpha _1}_{x_1}
                  \partial ^{\alpha _2}_{x_2}
                   \partial _\xi ^\beta
                  a(x_1, x_2, \xi)|               
                 \lesssim
                   \eabs{x_1}^{m_1 - r_1|\alpha _1|}
                  \eabs{x_2}^{m_2 - r_2|\alpha _2|} \eabs{\xi}^{\mu  - \rho |\beta |},
\end{equation}
for every multi-indices $ \alpha_1, \alpha_2, \beta$. We usually assume
$r_1,r_2,\rho \ge 0$ and $r_1+r_2+\rho>0$. More generally,  let
$\omega \in \mascP _{r_1,r_2,\rho}(\rr{3d})$. Then
$\SG ^{(\omega )}_{r_1,r_2,\rho} (\rr{3d})$ is the  set of all
$a \in C^\infty \left( \rr{3d} \right)$ which satisfy
\begin{equation}
\tag*{(\ref{eq:1.1})$'$}
                   |\partial ^{\alpha _1}_{x_1}
                  \partial ^{\alpha _2}_{x_2}
                   \partial _\xi ^\beta
                  a(x, y, \xi)|               
                 \lesssim
                   \omega(x_1,x_2,\xi )\eabs{x_1}^{- r_1|\alpha _1|}
                  \eabs{x_2}^{- r_2|\alpha _2|} \eabs{\xi}^{- \rho |\beta |},
\end{equation}
for every multi-indices $ \alpha_1, \alpha_2, \beta$. The set
$\SG ^{ (\omega) }_{r_1,r_2,\rho} (\rr{3n})$ is equipped with
the usual Fr\'{e}chet topology based upon the seminorms
implicit in \eqref{eq:1.1}$'$. 

\par

As above,
$$
\SG ^{(\omega)}_{r_1,r_2,\rho} =\SG ^{m_1,m_2,\mu }_{r_1,r_2,\rho}
\quad \text{when}\quad
\omega (x_1,x_2,\xi )=\eabs
{x_1}^{m_1}\eabs {x_2}^{m_2}\eabs \xi ^\mu .
$$
%Furthermore, we set
%%
%%
%$$
%    \SG ^{\infty}  = \bigcup_{(m_1,m_2,\mu) \in \rr{3}} \SG ^{m_1,m_2,\mu}_{1,1,1},
%    \hspace{2mm}
%    \SG ^{-\infty} = \bigcap_{(m_1,m_2,\mu) \in \rr{3}} \SG ^{m_1,m_2,\mu}_{1,1,1}=\cS.
%$$
%

\par

\begin{defn}
\label{def:psidoamp}
Let $r_1,r_2,\rho\ge0$, $r_1+r_2+\rho>0$, and
let $a\in \SG^{(\omega)}_{r_1,r_2,\rho}(\rr{3d})$, where $\omega \in
\mascP_{r_1,r_2,\rho}(\rr{3d})$. Then, the pseudo-differential operator
$\op(a)$ is the linear and continuous operator from $\cS(\rr d)$ to
$\cS^\prime(\rr d)$ with distribution kernel
\[
	K_{a}(x,y)=(2\pi )^{-d/2}(\mathscr F_3^{-1}a)(x,y,x-y).
\]
For $f\in\cS(\rr d)$, we have
\[
	(\op(a)f)(x)=(2\pi)^{-d}\iint e^{i \scal {x-y}\xi} a(x,y,\xi)f(y)\,dyd\xi.
\]
\end{defn}
The operators introduced in Definition \ref{def:psidoamp} have properties
analogous to the usual $\SG$ operator families described in \cite{Co}.
They coincide with the operators defined in the previous subsection,
where corresponding symbols are obtained by means of asymptotic
expansions, modulo remainders of
the type given in \eqref{def:gensgasexp}.
For the sake of brevity, we here omit the details. Evidently, when
neither the amplitude functions $a$, nor the corresponding weight $\omega$,
depend on $x_2$, we obtain the definition of
$\SG$ symbols and pseudo-differential operators, given in the
previous subsection.

\par

Next we consider \emph{$\SG$ functions}, also called
\emph{functions with $\SG$ behavior}. That is, amplitudes which
depend only on one set of variables in $\rr d$. We denote them
by $\SG ^{(\omega )}_r(\rr d)$ and $\SG^{m}_r(\rr d)$, $r>0$,
respectively, for a general weight $\omega\in\mascP_{r}(\rr{d})$
and for $\omega(x)=\eabs x^m$. Furthermore, if
$\phi \colon \rr {d_1} \to \rr {d_2}$,
and each component $\phi _j$, $j=1, \dots, d_2$, of $\phi$ belongs to
$\SG^{(\omega)}_r(\rr {d_1})$, we will occasionally write $\phi \in
\SG^{(\omega)} _r(\rr {d_1};\rr {d_2})$. We use similar notation
also for other vector-valued $\SG$ symbols and amplitudes.

\par

In the sequel we will need to consider compositions of $\SG$
amplitudes with functions with $\SG$ behavior. In particular,
the latter will often be $\SG$ maps (or diffeomorphisms) with
$\SG^0$-parameter dependence, generated by phase
functions (introduced in \cite{coriasco}), 
see Definitions \ref{def:sgmap} and \ref{def:sgmap}, and
Subsection \ref{subs:2.1} below. For the convenience of the
reader, we first recall, in a form slightly more general than the 
one adopted in \cite{coriasco}, the definition $\SG$
diffeomorphisms with $\SG^0$-parameter dependence.

\par

\begin{defn}\label{def:sgmap}
Let $\Omega _j \subseteq \rr{d_j}$ be open, $\Omega =
\Omega _1\times \cdots \times \Omega _k$ and let
$\phi \in C^\infty (\rr d\times \Omega ;\rr d)$. Then $\phi$ is
called an $\SG$ map (with $\SG
^0$-parameter dependence) when the following conditions hold:
\begin{enumerate}
	\item \label{cond:1}
	$\norm{\phi(x,\eta )}\asymp \norm {x}$, 
	uniformly with respect to $\eta \in \Omega$;
	\item for all $\alpha \in \zz {d}_+$, $\beta =
	(\beta _1,\dots ,\beta _k)$, $\beta _j \in \zz{d_j}_+$,
	$j=1,\dots, k$,
	and any $(x,\eta )\in \rr{d} \times \Omega$,
	\[
		|\partial ^\alpha _x\partial ^{\beta _1}_{\eta _1}\cdots  \partial
		^{\beta _k}_{\eta _k}\phi(x,\eta )|
		\lesssim
		\norm{x}^{1-|\alpha |}\norm{\eta _1}^{-|\beta _1|}\cdots
		\norm{\eta _k}^{-|\beta _k|},
	\]
where $\eta =(\eta _1,\dots ,\eta _k)$ and $\eta _j\in \Omega _j$ for every
$j$.
\end{enumerate}
\end{defn}
%
%
% I have changed notations here. a is used to denote amplitudes,
% and in my opinion one should avoid to use this letter to denote.
%  coordinates.
%
%

\par

\begin{defn}\label{def:sgdiffeo}
Let $\phi \in C^\infty (\rr{d}\times \Omega ;\rr d )$ be an $\SG$
map. Then $\phi$ is called an $\SG$
diffeomorphism (with $\SG^0$-parameter dependence) when
there is a constant $\varepsilon>0$ such that
\begin{equation}\label{eq:sgreg}
	|\det \phi ^\prime _x(x,\eta )|\ge\varepsilon,
\end{equation}
uniformly with respect to $\eta \in \Omega$.
\end{defn}
\begin{rem}
	Condition (\ref{cond:1}) in Definition \ref{def:sgmap} and
	\eqref{eq:sgreg}, together with abstract results
	(see, e.g., \cite{Berger}, page 221) and the inverse function
	theorem, imply that,
	for any $\eta \in \Omega$, an $\SG$ diffeomorphism
	$\phi(\cdo ,\eta )$ is a smooth, global bijection from
	$\rr {d}$ to itself with smooth inverse $\psi(\cdo ,\eta )
	=\phi ^{-1}(\cdo ,\eta )$.
	It can be proved that also the inverse mapping
	$\psi (y,\eta )=\phi ^{-1}(y,\eta )$ fulfills Conditions
	(1) and (2) in Definition \ref{def:sgmap}, 
	as well as \eqref{eq:sgreg}, see \cite{coriasco}.
\end{rem}

\par

\begin{defn}\label{def:omegainv}
	Let $r,\rho \ge 0$, $r+\rho >0$, $\omega \in \mascP_{r,\rho}
	(\rr{2d})$, and let $\phi ,\phi _1,\phi _2\in C^\infty (\rr{d}\times
	\rr{d_0};\rr d)$
	be $\SG$ mappings.% as in Definition \ref{def:sgmap}.
	\begin{enumerate}
		\item $\omega$ is called \emph{$(\phi,1)$-invariant}
		when
		\[
			\omega(\phi(x,\eta _1+\eta _2),\xi )\lesssim
			\omega
			(\phi(x,\eta _1),\xi ),
		\]
		for any $x, \xi \in \rr{d}$, $\eta _1,\eta _2\in \rr{d_0}$, uniformly with
		respect to $\eta _2\in \rr{d_0}$. The set of all $(\phi,1)$-invariant
		weights in $\mascP _{r,\rho}(\rr{2d})$ is denoted by
		$\mascP_{r,\rho}^{\phi,1}(\rr{2d})$;
		\item $\omega$ is called \emph{$(\phi,2)$-invariant}
		when
		\[
			\omega(x,\phi(\xi ,\eta _1+\eta _2))\lesssim
			\omega
			(x,\phi(\xi ,\eta _1)),
		\]
		for any $x, \xi \in \rr{d}$, $\eta _1,\eta _2\in \rr{d_0}$, uniformly with
		respect to $\eta _2\in \rr{d_0}$. The set of all $(\phi,2)$-invariant
		weights in $\mascP _{r,\rho}(\rr{2d})$ is denoted by
		$\mascP_{r,\rho}^{\phi,2}(\rr{2d})$;
%		%
%		\item $\omega$ is $(\phi,2)$-invariant, and write $\omega
%		\in \mascP_{r,\rho}^{\phi,2}(\rr{2d})$, if
%		%
%		\[
%			\omega(x,\phi(\xi,a+b))\lesssim\omega(x,\phi(\xi,a)),
%		\]
%		%
%		for any  $x,\xi\in\rr{d}$, $a,b\in\rr{s}$, uniformly with
%		respect to $b\in\rr{s}$.
%		%
		\item $\omega$ is called \emph{$(\phi _1,\phi _2)$-invariant}
		if $\omega$ is both $(\phi _1,1)$-invariant and $(\phi _2,2)$-invariant.
		The set of all $(\phi _1,\phi _2)$-invariant
		weights in $\mascP _{r,\rho}(\rr{2d})$ is denoted by
		$\mascP_{r,\rho}^{(\phi_1,\phi _2)}(\rr{2d})$
	\end{enumerate}
\end{defn}
%
%A completely similar definition can be given for a weight
%$\omega$ defined on $\rr{3d}$ and a
%$\SG$ map depending on two additional sets of parameters,
%each one belonging to $\rr{d}$. 
%In such situations, we talk of $(\phi,1)$-, $(\phi,2)$-, and
%$(\phi,3)$-invariance, depending on the variable in 
%$\omega$ where $\phi$ appears. 
%
%
%
%
%
% I have changed the definition here above and put a third definition
% about (phi _1,phi _2)-invariant weights. Furhtemore, I believe that
% the content will be more clear if we avoid \phi =(\phi _1,\phi _2)
% in the future. Therefore I believe that we in all situations should
% use \mascP_{r,\rho}^{(\phi_1,\phi _2)}
% instead of \mascP_{r,\rho}^{\phi } in all situations.
% For this reason I suggest that the text here above should be
% deleted.  /JT
% 
%
%
%In particular, when the invariance properties hold in all cases, 
%for arbitrary $\SG$ maps $\phi_1$, $\phi_2$, %and/or $\phi_3$, 
%we will 
%talk of $\phi$-invariant weights, and, to avoid heavier notation,
%we will sometimes simply
%write $\omega \in \mathscr{P}^\phi_{r,\rho}(\rr{2d})$,
%%or $\omega\in\mathscr{P}^\phi_{r_1,r_2,\rho}(\rr{3d})$, 
%when it is clear from the context which
%maps are involved in each variable. 

\par

We now show that, under mild additional conditions, the
families of weights introduced in Subsection 
\ref{subs:1.1} are indeed ``invariant'' under composition
with $\SG$ maps with $\SG^0$-parameter dependence.
That is, the compositions introduced in Definition
\ref{def:omegainv} are still weight functions in the sense
of Subsection \ref{subs:1.1}, belonging to suitable sets
$\mathscr{P}_{r,\rho}(\rr{2d})$.

\par

\begin{lemma}\label{lemma:omegainv}
	Let $r,\rho\in[0,1]$, $r+\rho>0$, $\omega\in\mascP
	_{r,\rho}(\rr{2d})$, and let 
	$\phi\colon\rr{d}\times\rr{d}\to\rr{d}$
	be an $\SG$ map as in Definition \ref{def:sgmap}.
	The following statements hold true.
	\begin{enumerate}
		\item Assume $\omega\in\mascP_{1,\rho}^{\phi,1}(\rr{2d})$, 
		and set $\omega_1(x,\xi):=\omega(\phi(x,\xi),\xi)$.
		Then, $\omega_1\in\mascP_{1,\rho}(\rr{2d})$.
		\item Assume $\omega\in\mascP_{r,1}^{\phi,2}(\rr{2d})$, 
		and set $\omega_2(x,\xi):=\omega(x,\phi(\xi,x))$.
		Then, $\omega_2\in\mascP_{r,1}(\rr{2d})$.
	\end{enumerate}
\end{lemma}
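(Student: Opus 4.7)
I prove statement (1); statement (2) then follows by the symmetric argument with the roles of $x$ and $\xi$ interchanged, using the $(\phi,2)$-invariance in place of the $(\phi,1)$-invariance. Smoothness of $\omega_1$ is immediate from the smoothness of $\omega$ and $\phi$, so two things remain to be established: that $\omega_1$ is of polynomial type (so that $\omega_1\in\mascP(\rr{2d})$), and that it satisfies the SG derivative bounds \eqref{SGCondWeight} with orders $1$ and $\rho$.

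For the moderateness, starting from $\omega_1(x+y,\xi+\eta)=\omega(\phi(x+y,\xi+\eta),\xi+\eta)$ I first absorb the shift $\eta$ in the second slot by the $\mascP$-moderation of $\omega$, then apply the $(\phi,1)$-invariance with $\eta_1=\xi$, $\eta_2=\eta$ to replace $\phi(x+y,\xi+\eta)$ by $\phi(x+y,\xi)$, and finally use the uniform bound $|\partial_x\phi|\lesssim 1$ (the $|\alpha|=1$ case of condition (2) of Definition \ref{def:sgmap}) to conclude that $\phi(x+y,\xi)-\phi(x,\xi)=O(|y|)$, so a last application of the $\mascP$-moderation in the first slot gives $\omega_1(x+y,\xi+\eta)\lesssim \omega_1(x,\xi)\,v(y,\eta)$ for a suitable polynomial $v$.

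For the derivative bounds I apply the Fa\`a di Bruno formula to the composition $\omega_1=\omega\circ\Phi$ with $\Phi(x,\xi)=(\phi(x,\xi),\xi)$. Each summand in $\partial_x^\alpha\partial_\xi^\beta\omega_1(x,\xi)$ has the shape
\[
(\partial_X^\gamma\partial_\Xi^\delta\omega)(\phi(x,\xi),\xi)\,\prod_{i=1}^{|\gamma|}\partial_x^{\alpha_i}\partial_\xi^{\beta_i}\phi_{k_i}(x,\xi),
\]
with $\sum_i\alpha_i=\alpha$ and $\delta+\sum_i\beta_i=\beta$. The hypothesis $\omega\in\mascP_{1,\rho}$ together with $\norm{\phi(x,\xi)}\asymp\norm{x}$ (condition (1) of Definition \ref{def:sgmap}) controls the first factor by $\omega_1(x,\xi)\norm{x}^{-|\gamma|}\norm{\xi}^{-\rho|\delta|}$, and the SG estimates on $\phi$ contribute $\prod_i\norm{x}^{1-|\alpha_i|}\norm{\xi}^{-|\beta_i|}=\norm{x}^{|\gamma|-|\alpha|}\norm{\xi}^{-(|\beta|-|\delta|)}$. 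Multiplying these together, the $\norm{x}$-exponents collapse to $-|\alpha|$, while the $\norm{\xi}$-exponent becomes $(1-\rho)|\delta|-|\beta|\le -\rho|\beta|$, the inequality using $\rho\in[0,1]$ and $|\delta|\le|\beta|$; summing the finitely many such terms yields the desired bound.

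The main obstacle I anticipate is the bookkeeping in the Fa\`a di Bruno expansion, specifically separating the two kinds of $\xi$-derivatives, those falling on the external $\Xi$-slot of $\omega$ and those falling inside $\phi$, and checking that they combine to reproduce the prescribed decay $\norm{\xi}^{-\rho|\beta|}$. The hypothesis $\rho\le 1$ is essential at this step: without it the term $(1-\rho)|\delta|$ could not be absorbed into the target exponent. It is also worth noting that the $(\phi,1)$-invariance enters only in the moderateness step and plays no role in the derivative estimates.
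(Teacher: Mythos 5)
Your proposal is correct and follows essentially the same route as the paper: Fa\`a di Bruno for the derivative bounds (with the same collapse of the $\norm{x}$-exponents via $\norm{\phi(x,\xi)}\asymp\norm{x}$ and the same use of $\rho\le 1$ to absorb the $(1-\rho)|\delta|$ term), and for moderateness the same three ingredients --- moderation of $\omega$, the $(\phi,1)$-invariance, and the bound $|\phi'_x|\lesssim 1$ giving $\phi(x+y,\xi)-\phi(x,\xi)=O(|y|)$ --- applied in a slightly permuted but equivalent order. Your closing observation that the invariance hypothesis is used only in the moderateness step also matches the paper's argument.
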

\begin{proof}
	We prove only the first statement, since the proof of the
	second one follows by a completely similar argument,
	exchanging the role of $x$ and $\xi$.

\par
	
	It is obvious that $\omega_1\in C^\infty(\rr{d}\times
	\rr{d})$. The estimates \eqref{SGCondWeight} follows
	by F\`aa di Bruno's formula
%	for the derivatives of
%	composed functions, as frequently used in similar
%	situations in
	(cf. \cite{coriasco}). Explicitly, for $|\alpha
	+\beta|>0$,
	\begin{equation*}
		\norm{x}^{|\alpha |} \norm{\xi}^{\rho |\beta |} \partial
		^\alpha_x\partial^\beta_\xi\omega_1(x,\xi)
		=\norm{x}^{|\alpha|}\norm{\xi}^{\rho|\beta|} \partial
		^\alpha_x\partial^\beta_\xi(\omega(\phi(x,\xi),\xi))
	\end{equation*}
	belongs to the span of
	\begin{equation*}
		\left\{\norm{x}^{|\alpha|}\norm{\xi}
		^{\rho|\beta|}
		(\partial ^{\gamma _0}_x\partial ^{\delta_0}_\xi
		\omega )(\phi(x,\xi),\xi)
		\cdot\prod_{1\le j\le |\gamma _0|}\partial
		^{\gamma _j}_x\partial^{\delta _j}_\xi \phi (x,\xi)\;
		\colon \; \right.
%		&\phantom{\in\mathrm{span}\{}
		\left.\sum_{j\ge1}\gamma _j=\alpha ,\sum _{j\ge0} \delta
		_j=\beta \right \}.
	\end{equation*} 
	Denoting by $f_{\alpha \beta \gamma \delta}$,
	$\gamma =(\gamma _0,\gamma _1,\dots ,\gamma
	_{|\gamma_0|})$, 
	$\delta=(\delta_1,\dots,\delta_{|\gamma_0|})$, the terms
	in braces above, in view of the hypotheses we have
	\begin{multline*}
		|f_{\alpha \beta \gamma \delta}(x,\xi)|
		\\[1ex]
		\lesssim \norm{x}^{|\alpha |}
		\norm{\xi}^{\rho |\beta| }
		\cdot
		\omega (\phi (x,\xi ),\xi )\norm{\phi(x,\xi)}
		^{-|\gamma _0|}\norm{\xi}^{-\rho|\delta_0|}
		\cdot \prod _{1\le j\le |\gamma _0|}\norm{x}
		^{1-|\gamma _j|}\norm{\xi}^{-|\delta _j|}
		\\[1ex]
		\lesssim \omega(\phi(x,\xi),\xi)
		\cdot
		\norm{x}^{|\alpha|}\norm{\xi}^{\rho|\beta|}\cdot
		\norm{x}^{-|\gamma_0|}\norm{\xi}^{-\rho|\delta_0|}
		\cdot
		\norm{x}^{|\gamma_0|}
		\norm{x}^{-\sum_{j\ge1}|\gamma_j|}
		\norm{\xi}^{-\sum_{j\ge1}|\delta_j|}
		\\[1ex]
		= \omega_1(x,\xi)
		\cdot\norm{\xi}^{\rho|\beta|}\cdot\norm{\xi}^{-\rho|\beta|}
		=\omega_1(x,\xi),
	\end{multline*}
	%
%
% Changed the environment from align to multiline /JT
%
%
%
%
	which implies \eqref{SGCondWeight} with $r=1$, $\rho\in[0,1]$, $|\alpha+\beta|>0$.
	The estimate for $\alpha=\beta=0$ is trivial. Then, \eqref{SGCondWeight} holds true for 
	$\omega_1$ with $r=1$, $\rho\in[0,1]$, as claimed. It remains to prove
	\eqref{moderate}. To this aim, observe that, by the moderateness
	of $\omega$, using the properties of $\phi$ we find, for some polynomial $v$,
	\begin{multline*}
		\omega_1(x+y,\xi+\eta)=\omega(\phi(x+y,\xi+\eta),\xi+\eta)
		\\
		=\omega\left(\phi(x,\xi+\eta)+\overbrace{\int_0^1
		\phi^\prime_x(x+ty,\xi+\eta)\cdot y\,dt}^{=z},
		\xi+\eta\right)
		\\
		\lesssim\omega(\phi(x,\xi+\eta),\xi)\,v(z,\eta).
	\end{multline*}
	%
%
% Changed environment from align to multiline /JT
%
%
	Since $|\phi^\prime_x(x+ty,\xi+\eta)|\lesssim1$ for any $x,y,\xi,\eta\in\rr{d}$, $t\in[0,1]$, so that
	$|z|\lesssim|y|$, we conclude, in view of the $(\phi,1)$-invariance of $\omega$, that
	\begin{multline*}
		\omega_1(x+y,\xi+\eta)\lesssim\omega(\phi(x,\xi+\eta),\xi)
		\cdot\tilde{v}(y,\eta)
		\\
		\lesssim\omega(\phi(x,\xi),\xi)\,\tilde{v}(y,\eta)
		\lesssim\omega_1(x,\xi)\,\tilde{v}(y,\eta),
	\end{multline*}
	%
%
%
% Changed environment from align to multiline /JT
%
	for some other suitable polynomial $\tilde{v}$ and any
	$x,y,\xi,\eta\in\rr{d}$. The proof is complete.
\end{proof}

\begin{rem}
\label{rem:3.2}
It is obvious that, when dealing with Fourier integral operators,
the requirements for $\phi$ and $\omega$ in Lemma \ref{lemma:omegainv}
need to be  satisfied only on the support of the involved amplitude.
By Lemma \ref{lemma:omegainv}, 
it also follows that if $a \in \SG^{(\omega)}_{1,1}(\rr{2d})$ and
$\phi=(\phi_1,\phi_2)$, where
$\phi_1\in  \SG^{1,0}_{1,1}(\rr{2d})$ and $\phi_2\in  \SG^{0,1}_{1,1}(\rr{2d})$
are $\SG$ maps with $\SG^0$ parameter dependence, 
then $a\circ \phi \in \SG^{({\omega _0})}_{1,1}(\rr{2d})$ when
${\omega _0}:=\omega \circ \phi$,
provided $\omega$ is $(\phi _1,\phi _2)$-invariant. Similar results
hold for $\SG$ amplitudes and weights defined on $\rr{3d}$.
\end{rem}

\par

\begin{rem}\label{rem:trivweight}
	By the definitions it follows that any weight
	$\omega=\vartheta_{s,\sigma}$, $s,\sigma\in\mathbf R$,
	 is $(\phi,1)$-,
	$(\phi,2)$-, and $(\phi_1,\phi_2)$-invariant with respect to any
	$\SG$ diffeomorphism 
	with $\SG^0$ parameter dependence $\phi$, $(\phi_1,\phi_2)$. 
%
% I have changed this remark. If something is well-known,
% then one should put a reference. Otherwise, if a statement
% follows by straight-forward computations from the definitions,
% then I believe it is preferable to write this. For me this is the
% case in this situation. /JT
%
%
%
%
%	As it is well-known, any weight $\omega=\vartheta_{s,\sigma}$, $s,\sigma\in\mathbf R$,
%	 is $(\phi,1)$-,
%	$(\phi,2)$-, and $(\phi_1,\phi_2)$-invariant with respect to any $\SG$ diffeomorphism 
%	with $\SG^0$ parameter dependence $\phi$, $(\phi_1,\phi_2)$. 
%
\end{rem}

%
% Cut-off
%

We conclude the section by recalling the definition, taken from \cite{coriasco},
of the sets of $\SG$ compatible cutoff and $0$-excision functions, which we 
will use in the sequel. By a standard construction, it is easy to prove that the 
sets $\Xi^\Delta(k)$ and $\Xi(R)$ introduced in Definition \ref{def:1.2.2} below
are non-empty, for any $k,R>0$.

\begin{defn}
\label{def:1.2.2}
The sets $\Xi^\Delta(k)$, $k > 0$, of the $\SG$ compatible cut-off functions
along the diagonal of $\rr{d}\times\rr{d}$, consist of all 
$\chi = \chi(x,y) \in  \SG ^{0,0}_{1,1}(\rr {2d})$ such that
\begin{equation}
\label{eq:1.1.3}
\begin{array}{rcl}
        |y-x| \le k\norm{x}/2 & \Longrightarrow & \chi(x,y) = 1,
        \\[1ex]
        |y-x| >         k      \norm{x} & \Longrightarrow & \chi(x,y) = 0.
\end{array}
\end{equation}
If not otherwise stated, we always assume $k \in (0,1)$.

\noindent
$\Xi(R)$ with $R > 0$ will instead denote the sets of all 
$\SG$ compatible $0$-excision functions, namely, 
the set of all $\varsigma = \varsigma(x, \xi) \in \SG^{0,0}_{1,1}(\rr {2d})$ such that
\begin{equation}
\label{eq:1.1.4}
\begin{array}{rcl}
        |x|+|\xi| \ge R           & \Longrightarrow & \varsigma(x, \xi) = 1,
        \\[1ex]
        |x|+|\xi| \le {R}/2 & \Longrightarrow & \varsigma(x, \xi) = 0.
\end{array}
\end{equation}
\end{defn}

\par

%%%%%%%%%%%%%%%%%%%%%%%%%%%%%%%%%%%%
%%%%%%%%%%%%%%%%%%%%%%%%%%%%%%%%%%%%
\section{Symbolic calculus for generalized FIOs of $\SG$ type}
\label{sec2}
%%%%%%%%%%%%%%%%%%%%%%%%%%%%%%%%%%%%
%
%
%  A general description of the section is needed.
%
% Added /SC
%
We here introduce the class of Fourier integral operators we are interested in, 
generalizing those studied in \cite{coriasco}. In particular, we show how a symbolic
calculus can be developed for them. We examine their compositions with the generalized 
$\SG$ pseudo-differential operators introduced in \cite{CJT2}, and the compositions
between Type I and Type II operators. A key tool in the proofs of the composition
results below are the results on asymptotic expansions in the Weyl-H\"ormander
calculus obtained in \cite{CoTo}.%, summarized in the Appendix,
%for the convenience of the reader.

\par

%
%
% Phase functions
%

\subsection{Phase functions of $\SG$ type}\label{subs:2.1}
We recall the definition of the class of admissible phase functions in 
the $\SG$ context, as it was given in \cite{coriasco}. We then observe
that the subclass of \emph{regular phase functions} generates
(parameter-dependent) mappings of $\rr d$ onto itself, which turn
out to be $\SG$ maps with $\SG^0$ parameter-dependence.
Finally, we define some \emph{regularizing operators},
which are used to prove the properties of the $\SG$ Fourier integral
operators introduced in the next subsection.

\par

\begin{defn}\label{def:2.1}
A real-valued function $\varphi \in \SG^{1,1}_{1,1}(\rr {2d})$
is called a \emph{simple phase function} (or \emph{simple phase}), if
\begin{equation}
\label{eq:2.0}
\norm{\varphi_{\xi}^\prime(x,\xi)} \asymp \norm{x}
       \mbox{ and }   
 \norm{\varphi_{x}^\prime(x,\xi)} \asymp \norm{\xi},
\end{equation} 
are fulfilled, uniformly with respect to $\xi$ and $x$, repectively. 
The set of all simple phase functions is denoted by $\Ph$.
Moreover, the simple phase function $\varphi$ is called \emph{regular},
if $\left|\det (\varphi^{\prime\prime}_{x \xi} (x,\xi) ) \right| \ge c$ for some $c>0$
and all $x,\xi \in \rr d$. The set of all regular phases is denoted by $\Phr$.
\end{defn}

\par

We observe that a regular phase function $\varphi$
defines two globally invertible mappings,
namely $\xi \mapsto  \varphi^\prime_x(x,\xi)$ and $x \mapsto  \varphi
^\prime_\xi (x,\xi)$, see the analysis in \cite{coriasco}. 
Then, the following result holds true for the mappings $\phi_1$ and $\phi_2$
generated by the first derivatives of the admissible regular phase functions.

\par

\begin{prop}
\label{prop:3.2}
Let $\varphi \in \Ph$. Then, for any $x_0,\xi _0\in \rr d$,
$\phi_1\colon\rr{d}\to\rr{d}\colon x \mapsto  \varphi^\prime_\xi (x,\xi _0)$ and
$\phi_2\colon\rr{d}\to\rr{d}\colon \xi \mapsto  \varphi^\prime_x(x_0,\xi)$ 
are $\SG$ maps (with $\SG^0$ parameter dependence), from $\rr{d}$ to itself. 
If $\varphi\in\Phr$, $\phi_1$ and $\phi_2$ give rise to $\SG$ diffeomorphism 
with $\SG^{0}$ parameter dependence.
\end{prop}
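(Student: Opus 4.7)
The plan is to verify the two conditions of Definition \ref{def:sgmap} for each of the maps $\phi_1$ and $\phi_2$, and then to deduce the diffeomorphism claim from the non-degeneracy condition in $\Phr$. I discuss $\phi_1$ in detail; the argument for $\phi_2$ is completely symmetric, with the roles of $x$ and $\xi$ exchanged.

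First I would verify condition \eqref{cond:1} of Definition \ref{def:sgmap} for $\phi_1(x,\xi_0) = \varphi'_\xi(x,\xi_0)$. This asks for $\norm{\phi_1(x,\xi_0)} \asymp \norm{x}$ uniformly with respect to the parameter $\xi_0 \in \rr{d}$, which is precisely the first equivalence in \eqref{eq:2.0}, valid uniformly in $\xi$ by Definition \ref{def:2.1}.

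Next I would turn to the derivative estimate in item (2) of Definition \ref{def:sgmap}. Since $\varphi \in \SG^{1,1}_{1,1}(\rr{2d})$ and taking one extra $\xi$-derivative lowers the $\xi$-order by one, each component of $\varphi'_\xi$ belongs to $\SG^{1,0}_{1,1}(\rr{2d})$, so that for all multi-indices $\alpha,\beta$,
\[
|\partial_x^\alpha \partial_{\xi_0}^\beta \phi_1(x,\xi_0)| \lesssim \norm{x}^{1-|\alpha|}\norm{\xi_0}^{-|\beta|}.
\]
This matches the requirement of Definition \ref{def:sgmap} with a single parameter $\eta_1 = \xi_0 \in \Omega_1 = \rr{d}$, so $\phi_1$ is an $\SG$ map with $\SG^0$-parameter dependence.

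Finally, for the diffeomorphism claim, assume $\varphi \in \Phr$. Then $(\phi_1)'_x(x,\xi_0) = \varphi''_{\xi x}(x,\xi_0)$, whose determinant has absolute value equal to $|\det \varphi''_{x\xi}(x,\xi_0)|$, and is therefore bounded below by the positive constant $c$ of Definition \ref{def:2.1}, uniformly in $(x,\xi_0)$. This is exactly the non-degeneracy condition \eqref{eq:sgreg} uniformly in the parameter, so $\phi_1$ is an $\SG$ diffeomorphism with $\SG^0$-parameter dependence. No step presents a real obstacle here: the proof is essentially an unwinding of definitions, and the only subtle point is to keep track of the uniformity in the parameter, which follows automatically from the joint $\SG^{1,1}_{1,1}$ structure of $\varphi$.
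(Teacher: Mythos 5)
Your proof is correct and is exactly the definition-unwinding argument that the paper leaves implicit (the paper states Proposition \ref{prop:3.2} without proof, deferring to the analysis in \cite{coriasco}): condition \eqref{eq:2.0} gives item (\ref{cond:1}) of Definition \ref{def:sgmap}, the $\SG^{1,1}_{1,1}$ estimates on $\varphi$ give item (2), and $|\det\varphi''_{\xi x}|=|\det\varphi''_{x\xi}|\ge c$ gives \eqref{eq:sgreg}. The global bijectivity then follows from the Remark after Definition \ref{def:sgdiffeo}, so nothing further is needed.
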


\par

For any $\varphi \in \Ph$, the operators $\Theta _{1,\fy}$ and $\Theta _{2,\fy}$
are defined by
$$
(\Theta _{1,\fy}f)(x,\xi ) \equiv f(\fy '_\xi (x,\xi ), \xi)
\quad \text{and}\quad
(\Theta _{2,\fy}f)(x,\xi ) \equiv f(x, \fy '_x(x,\xi )),
$$
when $f\in C^1(\rr {2d})$, and remark that
the modified weights
\begin{equation}\label{omegaVarphiDef}
(\Theta _{1,\fy}\omega )  (x,\xi ) = \omega (\fy '_\xi (x,\xi ), \xi)
\quad \text{and}\quad
(\Theta _{2,\fy}\omega ) (x,\xi ) = \omega (x, \fy '_x(x,\xi )), 
\end{equation}
%%
%
% I have also performed changes here. Find that the operators \Theta _k
% should be more convenient to use. /JT
%
%
%
will appear frequently in the sequel. In the following lemma we
show that these weights belong to the same classes of weights
as $\omega$, provided %that, in addition to the previous assumptions 
%on the weights,
they additionally fulfill% a condition of the form
\begin{equation}\label{WeightPhaseCond}
\Theta _{1,\fy}\omega  \asymp \Theta _{2,\fy}\omega 
% \omega_{1,\varphi}(x,\xi)=\omega (\fy '_\xi (x,\xi ),\xi ) \asymp
% \omega (x,\fy '_x (x,\xi ) )=\omega_{2,\varphi}(x,\xi),
\end{equation}
when $\fy$ is the involved phase function. That is,
\eqref{WeightPhaseCond} is a sufficient condition to obtain
$(\phi_1,1)$- and/or $(\phi_2,2)$-invariance of $\omega$
in the sense of Definition \ref{def:omegainv}, depending on the values of
the parameters $r,\rho\ge0$.

\par

\begin{lemma}
Let $\fy$ be a simple phase on $\rr {2d}$, $r,\rho \in [0,1]$ be such that
$r=1$ or $\rho =1$, and let $\Theta _{j,\fy}\omega$, $j=1,2$, be as in
\eqref{omegaVarphiDef}, where $\omega \in \mascP
_{r,\rho}(\rr {2d})$ satisfies \eqref{WeightPhaseCond}.
Then
\begin{equation*}
	\Theta _{j,\fy}\omega \in \mascP _{r,\rho}(\rr {2d}),\quad j=1,2.
\end{equation*}
\end{lemma}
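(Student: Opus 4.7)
The plan is to reduce the claim to Lemma \ref{lemma:omegainv} by identifying suitable $\SG$ maps generated by $\fy$. By Proposition \ref{prop:3.2}, the mappings $\phi_1(x,\xi):=\fy'_\xi(x,\xi)$ (with $\xi$ as $\SG^0$ parameter) and $\phi_2(\xi,x):=\fy'_x(x,\xi)$ (with $x$ as $\SG^0$ parameter) are $\SG$ diffeomorphisms with $\SG^0$ parameter dependence. With this notation, $\Theta_{1,\fy}\omega(x,\xi) = \omega(\phi_1(x,\xi),\xi)$ and $\Theta_{2,\fy}\omega(x,\xi) = \omega(x,\phi_2(\xi,x))$, which are precisely the composed weights constructed in parts (1) and (2) of Lemma \ref{lemma:omegainv}.

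I would then split into the two (possibly overlapping) cases prescribed by the hypothesis. If $r=1$, Lemma \ref{lemma:omegainv}(1) applied with $\phi=\phi_1$ yields $\Theta_{1,\fy}\omega \in \mascP_{1,\rho} = \mascP_{r,\rho}$. If $\rho=1$, Lemma \ref{lemma:omegainv}(2) applied with $\phi=\phi_2$ gives $\Theta_{2,\fy}\omega \in \mascP_{r,1} = \mascP_{r,\rho}$. Thus at least one of the modified weights is placed in $\mascP_{r,\rho}$ by direct application of the previous lemma.

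The remaining weight is then handled through the symmetry \eqref{WeightPhaseCond}: the equivalence $\Theta_{1,\fy}\omega \asymp \Theta_{2,\fy}\omega$ combined with \eqref{onestar} (which allows us to pass from a weight to a smooth equivalent in the same class) transfers the membership to the other weight as well. This mechanism is what lifts the asymmetric hypothesis ``$r=1$ or $\rho=1$'' of Lemma \ref{lemma:omegainv} to a symmetric conclusion covering both $j=1,2$.

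The principal difficulty is verifying the $(\phi_j,j)$-invariance of $\omega$ required by the hypothesis of Lemma \ref{lemma:omegainv} (see Definition \ref{def:omegainv}), since our statement only postulates ordinary moderateness of $\omega$ together with \eqref{WeightPhaseCond}. This is where \eqref{WeightPhaseCond} does essential work: the imposed equivalence, coupled with the $\SG^{1,1}_{1,1}$ bound $|\fy''_{x\xi}|\lesssim 1$ used to control increments of $\phi_j$ via a first-order Taylor expansion in the parameter variable, furnishes the required invariance out of the moderateness of $\omega$ and the $\SG$ bounds of $\fy$. Once this invariance is in hand, Lemma \ref{lemma:omegainv} closes the argument directly; if any gap remains, one simply repeats the Faà di Bruno computation from the proof of that lemma, substituting \eqref{WeightPhaseCond} for the abstract invariance hypothesis at the single step where it is used.
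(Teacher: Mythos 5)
Your reduction to Lemma \ref{lemma:omegainv} does give the derivative estimates \eqref{SGCondWeight} for both $\Theta_{1,\fy}\omega$ and $\Theta_{2,\fy}\omega$ (this is exactly how the paper handles that part), but the two mechanisms you propose for the rest of the argument both have real problems. First, the transfer of membership in $\mascP_{r,\rho}$ across the equivalence $\Theta_{1,\fy}\omega\asymp\Theta_{2,\fy}\omega$ is not legitimate for the derivative estimates: $\mascP_{r,\rho}$ is not closed under $\asymp$, and the paper explicitly warns that there are weights in $\mascP(\rr{2d})$ with no equivalent element in $\mascP_{r,\rho}(\rr{2d})$; \eqref{onestar} only produces a smooth elliptic equivalent in $\mascP$, not one satisfying \eqref{SGCondWeight} of order $(r,\rho)$. (The transfer \emph{is} valid for the moderateness \eqref{moderate}, which is preserved under equivalence, so this gap is repairable by obtaining the derivative bounds for each composition separately via Fa\`a di Bruno, which is what the paper does.)

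The more serious gap is your claim that \eqref{WeightPhaseCond} plus moderateness yields the $(\phi_j,j)$-invariance needed to invoke the moderateness part of Lemma \ref{lemma:omegainv}. The increment of $\phi_1(x,\eta)=\fy'_\xi(x,\eta)$ in the \emph{parameter} variable $\eta$ is controlled by $\fy''_{\xi\xi}\in\SG^{1,-1}_{1,1}$, not by the bounded mixed derivative $\fy''_{x\xi}$; this increment can be of size $\eabs{x}$, so moderateness of $\omega$ only produces a factor $\eabs{x}^{N}$ rather than a constant, and I see no way to extract the uniform-in-$\eta_2$ bound of Definition \ref{def:omegainv} from the single pointwise equivalence \eqref{WeightPhaseCond}. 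The invariance is a genuinely stronger hypothesis, and the paper's proof deliberately avoids it: it establishes only \emph{partial} moderateness, namely $(\Theta_{1,\fy}\omega)(x+y,\xi)\lesssim(\Theta_{1,\fy}\omega)(x,\xi)\eabs{y}^{N_1}$ (a shift in $x$ only, where the Taylor remainder $\scal{\fy''_{x\xi}(x+\theta y,\xi)}{y}$ is $O(|y|)$ because $\fy''_{x\xi}$ is bounded) and the symmetric estimate for $\Theta_{2,\fy}\omega$ under a shift in $\xi$ only, and then chains the two one-variable estimates together by switching representations through $\Theta_{1,\fy}\omega\asymp\Theta_{2,\fy}\omega$, so that each shift is always performed in the variable that does not enter through the phase. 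You have correctly identified that \eqref{WeightPhaseCond} is what handles the ``cross'' direction, but the argument as you describe it (derive the invariance, then apply Lemma \ref{lemma:omegainv} as a black box) would not go through; the chaining argument must be done directly.
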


\par

\begin{proof}
Evidently, the estimates
\eqref{SGCondWeight} for $\Theta _{1,\fy}\omega$
and $\Theta _{2,\fy}\omega$ follow from
Lemma \ref{lemma:omegainv}. We need to show
that $\Theta _{1,\fy}\omega$ and $\Theta _{2,\fy}\omega$ are moderate.

\par

By Taylor expansion, and the fact that $\omega$ is moderate,
there are numbers $\theta =\theta (x,y)\in [0,1]$ and $N_1\ge 0$
such that
\begin{multline*}
(\Theta _{1,\fy}\omega) (x+y,\xi ) = \omega (\fy _\xi '(x+y,\xi ),\xi )
= \omega (\fy _\xi '(x,\xi ) + \scal {\fy _{x,\xi } ''(x+\theta y,\xi )}y ,\xi )
\\[1ex]
\lesssim
\omega (\fy _\xi '(x,\xi ),\xi )
\eabs {\scal {\fy _{x,\xi } ''(x+\theta y,\xi )}y}^{N_1}
\lesssim \omega (\fy _\xi '(x,\xi ),\xi )\eabs y^{N_1}.
\end{multline*}
%%
%
% Corrected \fy _\xi '(x+y,\xi ) into \fy _\xi '(x,\xi )  ./JT
%
This gives
\begin{equation*}%\label{omega2fyEst}
(\Theta _{1,\fy}\omega )(x+y,\xi )\lesssim (\Theta _{1,\fy}\omega )(x,\xi )\eabs y^{N_1}.
\end{equation*}
In the same way we get
\begin{equation*}%\label{omega1fyEst}
(\Theta _{2,\fy}\omega )(x,\xi +\eta )\lesssim (\Theta _{2,\fy}\omega )(x,\xi )\eabs \eta ^{N_2},
\end{equation*}
for some $N_2\ge 0$. From these estimates we obtain
\begin{multline*}
(\Theta _{2,\fy}\omega )(x+y,\xi +\eta ) \lesssim
(\Theta _{2,\fy}\omega )(x+y,\xi )\eabs \eta ^{N_2}
\\%[1ex]
\asymp
(\Theta _{1,\fy}\omega )(x+y,\xi )\eabs \eta ^{N_2}
\lesssim
(\Theta _{1,\fy}\omega )(x,\xi )\eabs y^{N_1}\eabs \eta ^{N_2}
\\%[1ex]
\asymp 
(\Theta _{2,\fy}\omega )(x,\xi )\eabs y^{N_1}\eabs \eta ^{N_2}.
\end{multline*}
Hence $\Theta _{2,\fy}\omega$, and thereby $\Theta _{1,\fy}\omega$, are
$v$-moderate, when $v(x,\xi )=\eabs x^{N_1}\eabs \xi ^{N_2}$.
\end{proof}
%
%
% Continue from here. /JT
%
%
%

\par

In the following lemma we establish mapping properties for the operators
$R_1$ and $\Div$, which, for $\varphi\in\Ph$, are defined by the formulas
\begin{equation}
\label{eq:2.2}
R_1 = \frac{1 - \Delta_{\xi}}{\norm{\varphi^\prime_{\xi} (x,\xi)}^2 - 
                            i \Delta_{\xi}\varphi(x,\xi)},
\end{equation}
and
\begin{equation}
\label{eq:2.2A}
(\Div a)(x,\xi ) =
\frac{a(x,\xi )}{\norm{\varphi^\prime_{\xi}(x,\xi ) }^2 - i \Delta_{\xi}\varphi (x,\xi )}.
\end{equation}
%
% I have changed the function q into the function a here above, since
% we are not using q so often to denote a function. /JT
%
%
%
Here and in what follows we let
$$
^{t}a(x,\xi)=a(\xi,x)\quad  \text{and} \quad (a^*)(x,\xi)
=\overline{a(\xi,x)},
$$
when $a(x,\xi)$ is a function.

\par

\begin{lemma}
\label{lemma:2.2}
Let $\varphi\in\Ph$ and let $R_1$ and $\Div$ be defined by \eqref{eq:2.2} and
\eqref{eq:2.2A}. Then the following is true:
\begin{enumerate}
	\item $R_1e^{i\varphi} = e^{i\varphi}$;

\vrum

	\item $R_1 = \Div (1 - \Delta_{\xi})$:

\vrum

	\item for any positive integer $l$,
	\begin{equation}
		\label{eq:2.2.1}
		( {^t R_1} )^l = \underbrace{(1 - \Delta_{\xi}) \Div \cdots 
                           (1 - \Delta_{\xi}) \Div}_{\mbox{$l$ times}} = 
		\Div^l + Q_l(\Div, \Delta_{\xi}),
	\end{equation}
	where $Q_l(\Div, \Delta_{\xi})$ is a suitable differential operator
	depending on $l, \Div, \Delta_{\xi}$, whose terms 
	contains exactly $l$ factors equal to $\Div$ and at least one equal to $\Delta_{\xi}$.
	
\vrum

	\item If $\omega \in \mascP _{r,\rho}(\rr {2d})$, where $r,\rho \in [0,1]$ are such
	that $r+\rho >0$, then the mappings
	\begin{align*}
		\Div^l \;\colon &\SG^{(\omega)}_{r,\rho}(\rr {2d}) 
		\to 
		\SG^{(\omega\cdot\vartheta_{-2l,0})}_{r,\rho}(\rr {2d}),
	\\[1ex]
 		Q_l (\Div, \Delta_{\xi}) \; \colon &\SG^{(\omega)}_{r,\rho}(\rr {2d})
		\to 
		\SG^{(\omega\cdot\vartheta_{-2l,-2})}_{r,\rho}(\rr {2d})
	\end{align*}
	are continuous.
	\end{enumerate}
\end{lemma}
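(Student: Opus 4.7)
The plan is to dispatch the first three assertions by direct computation and symbolic manipulation, and then to reduce the symbol-class estimates of (4) to the ellipticity of the denominator appearing in $R_1$.

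For (1), I would differentiate $e^{i\varphi}$ twice in $\xi$: from $\partial_{\xi_j}e^{i\varphi}=i\varphi^\prime_{\xi_j}e^{i\varphi}$ and $\partial_{\xi_j}^2 e^{i\varphi}=(i\varphi^{\prime\prime}_{\xi_j\xi_j}-(\varphi^\prime_{\xi_j})^2)e^{i\varphi}$ one obtains $\Delta_\xi e^{i\varphi}=(i\Delta_\xi\varphi-|\varphi^\prime_\xi|^2)e^{i\varphi}$, so $(1-\Delta_\xi)e^{i\varphi}=(\norm{\varphi^\prime_\xi}^{2}-i\Delta_\xi\varphi)e^{i\varphi}$, and dividing by the denominator yields $R_1 e^{i\varphi}=e^{i\varphi}$. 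Statement (2) is an immediate unfolding of the definitions, since $\Div$ is the multiplication operator by $1/(\norm{\varphi^\prime_\xi}^{2}-i\Delta_\xi\varphi)$ and $R_1=\Div\circ(1-\Delta_\xi)$.

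For (3), I would note that multiplication by a function and the operator $\Delta_\xi$ are each formally self-transpose under the $L^2$-pairing in $\xi$. Combining this with (2) and the rule ${}^t(AB)={}^tB\,{}^tA$ gives ${}^tR_1=(1-\Delta_\xi)\circ\Div$, whose $l$-th power is precisely the alternating product on the right-hand side of \eqref{eq:2.2.1}. To separate the leading term, I would expand each factor $(1-\Delta_\xi)$ as $1+(-\Delta_\xi)$ and distribute non-commutatively, producing $2^l$ monomials, each containing \emph{exactly} $l$ copies of $\Div$ interspersed with $1$'s and $(-\Delta_\xi)$'s. The unique monomial with no $\Delta_\xi$ is $\Div^l$; the remaining $2^l-1$ monomials, each with at least one $\Delta_\xi$, are collected into $Q_l(\Div,\Delta_\xi)$.

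For (4), the preparatory step is to show $1/g\in\SG^{-2,0}_{1,1}(\rr{2d})$, where $g=\norm{\varphi^\prime_\xi}^{2}-i\Delta_\xi\varphi$. Since $\varphi\in\Ph\subset\SG^{1,1}_{1,1}$, one has $\varphi^\prime_\xi\in\SG^{1,0}_{1,1}$ and $\Delta_\xi\varphi\in\SG^{1,-1}_{1,1}$, hence $g\in\SG^{2,0}_{1,1}$; the simple-phase condition $\norm{\varphi^\prime_\xi}\asymp\norm x$ yields $|g|\ge\operatorname{Re} g=\norm{\varphi^\prime_\xi}^{2}\asymp\norm x^{2}$, so $g$ is $\SG^{2,0}_{1,1}$-elliptic. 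A standard Fa\`a di Bruno (or induction) argument then gives $1/g\in\SG^{-2,0}_{1,1}$. Therefore $\Div$ is multiplication by a symbol in $\SG^{-2,0}_{1,1}$, and the product rule for generalized $\SG$-classes yields the continuous mapping $\Div\colon\SG^{(\omega)}_{r,\rho}\to\SG^{(\omega\vartheta_{-2,0})}_{r,\rho}$; iterating $l$ times gives the claim for $\Div^l$. For $Q_l$, each of the $2^l-1$ constituent monomials carries $l$ factors of $\Div$ (the $\vartheta_{-2l,0}$ gain) and at least one $\Delta_\xi$; applying Leibniz to move the extra $\Delta_\xi$'s onto the internal factors $1/g$, whose full $(1,1)$-regularity yields $\Delta_\xi(1/g)\in\SG^{-2,-2}_{1,1}$, provides the additional $\vartheta_{0,-2}$ improvement, producing the claimed target class $\SG^{(\omega\vartheta_{-2l,-2})}_{r,\rho}$.

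The main obstacle is the ellipticity/inversion step establishing $1/g\in\SG^{-2,0}_{1,1}$ with uniform estimates on all derivatives; once this is available, the continuity of $\Div^l$ and $Q_l$ between the asserted symbol classes follows by systematic bookkeeping through Leibniz and the composition rules developed in Subsections \ref{subs:1.1}--\ref{subs:1.3}.
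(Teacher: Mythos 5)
The paper itself offers no proof of this lemma (it is treated as a straightforward computation, essentially imported from \cite{coriasco}), so your proposal is supplying details the authors omit rather than following or departing from a written argument. Parts (1)--(3) are handled correctly: the computation $(1-\Delta_\xi)e^{i\varphi}=(\norm{\varphi'_\xi}^2-i\Delta_\xi\varphi)e^{i\varphi}$, the identification $R_1=\Div\,(1-\Delta_\xi)$, the transposition ${}^tR_1=(1-\Delta_\xi)\Div$, and the non-commutative binomial expansion isolating $\Div^l$ are all sound. The same goes for the first mapping in (4): $g=\norm{\varphi'_\xi}^2-i\Delta_\xi\varphi$ lies in $\SG^{2,0}_{1,1}$, is elliptic there because $|g|\ge\operatorname{Re}g\asymp\norm{x}^2$ by \eqref{eq:2.0}, hence $1/g\in\SG^{-2,0}_{1,1}$, and multiplication by a symbol of full $(1,1)$-regularity maps $\SG^{(\omega)}_{r,\rho}$ into $\SG^{(\omega\vartheta_{-2,0})}_{r,\rho}$ for $r,\rho\in[0,1]$.

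The gap is in your treatment of $Q_l$. You claim the extra $\vartheta_{0,-2}$ gain by ``applying Leibniz to move the extra $\Delta_\xi$'s onto the internal factors $1/g$'', but Leibniz does not let you choose where derivatives land: it also produces the terms in which the $\xi$-derivatives fall on the input symbol $a$. Already for $l=1$, $Q_1a=-\Delta_\xi(a/g)$ contains the term $(\Delta_\xi a)/g$, and $\Delta_\xi a$ only gains $\norm{\xi}^{-2\rho}$, so this term lies in $\SG^{(\omega\vartheta_{-2,-2\rho})}_{r,\rho}$ and, when $\rho<1$, not in $\SG^{(\omega\vartheta_{-2,-2})}_{r,\rho}$ (take $\omega=1$, $r=1$, $\rho=0$ and $a(x,\xi)=\cos\norm{\xi}\in\SG^{0,0}_{1,0}$: then $\Delta_\xi a$ has no decay in $\xi$ at all). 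Your mechanism therefore only yields $Q_l\colon\SG^{(\omega)}_{r,\rho}\to\SG^{(\omega\vartheta_{-2l,-2\rho})}_{r,\rho}$; the printed target with $\vartheta_{0,-2}$ is attained only when $\rho=1$. This is arguably an imprecision of the statement itself rather than of your overall strategy --- in the one place the lemma is invoked (the proof of Theorem \ref{thm:2.1}) only the $\vartheta_{-2l,0}$ gain is actually used, and the same slippage occurs there --- but as a proof of the lemma as written your argument for the second mapping does not close: you should either assume $\rho=1$ or record the weaker, correct exponent $-2\rho$.
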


\par

%\noindent
The next lemma follows by straight-forward computations, using induction. The details are
left for the reader.
%
% Performed changes. /JT
%
%

\par

\begin{lemma}
\label{lemma:2.1}
Let $\varphi \in \SG^{1,1}_{1,1}(\rr {2d})$, and let $\alpha$ and $\beta$
be multi-indices. Then $\partial _x^{\alpha}  \partial_{\xi}^{\beta}
e^{i \varphi(x,\xi)} =
         b_{\alpha ,\beta} (x,\xi) e^{i \varphi(x,\xi)}$, for some
%      $b_{\alpha ,\beta} \in \SG^{|\alpha |,|\beta|}_{1,1}(\rr {2d})$.
%
% In my opinion, the order of \alpha and \beta should be
% reversed. Please check this. I have changed this. /JT
%
         $b_{\alpha ,\beta} \in \SG^{|\beta |,|\alpha |}_{1,1}(\rr {2d})$.
\end{lemma}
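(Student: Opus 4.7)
The plan is a straightforward induction on the total order $n=|\alpha|+|\beta|$, exploiting the two basic algebraic properties of the symbol classes $\SG^{m,\mu}_{1,1}$: differentiation with respect to $x_j$ (resp.\ $\xi_j$) maps $\SG^{m,\mu}_{1,1}$ into $\SG^{m-1,\mu}_{1,1}$ (resp.\ $\SG^{m,\mu-1}_{1,1}$), and pointwise multiplication maps $\SG^{m_1,\mu_1}_{1,1}\times\SG^{m_2,\mu_2}_{1,1}$ into $\SG^{m_1+m_2,\mu_1+\mu_2}_{1,1}$. Both follow directly from the Leibniz rule applied to the defining estimate \eqref{disSG}.

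For the base case $\alpha=\beta=0$ one takes $b_{0,0}=1\in\SG^{0,0}_{1,1}$. For the inductive step, assuming the statement for all multi-indices of total order strictly less than $n$, one picks a coordinate direction in which to carry out one more derivative. If $|\alpha|\ge 1$, write $\alpha=\alpha'+e_j$ and compute
\begin{equation*}
\partial_x^{\alpha}\partial_\xi^{\beta}e^{i\varphi}
= \partial_{x_j}\bigl(b_{\alpha',\beta}\,e^{i\varphi}\bigr)
= \bigl(\partial_{x_j}b_{\alpha',\beta}+i\,b_{\alpha',\beta}\,\partial_{x_j}\varphi\bigr)e^{i\varphi},
\end{equation*}
so that $b_{\alpha,\beta}=\partial_{x_j}b_{\alpha',\beta}+i\,b_{\alpha',\beta}\,\partial_{x_j}\varphi$. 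By the inductive hypothesis $b_{\alpha',\beta}\in\SG^{|\beta|,|\alpha|-1}_{1,1}$, and since $\varphi\in\SG^{1,1}_{1,1}$ one has $\partial_{x_j}\varphi\in\SG^{0,1}_{1,1}$. The two algebraic properties above then yield $\partial_{x_j}b_{\alpha',\beta}\in\SG^{|\beta|-1,|\alpha|-1}_{1,1}\subseteq\SG^{|\beta|,|\alpha|}_{1,1}$ and $b_{\alpha',\beta}\,\partial_{x_j}\varphi\in\SG^{|\beta|,|\alpha|}_{1,1}$, hence $b_{\alpha,\beta}\in\SG^{|\beta|,|\alpha|}_{1,1}$. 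The case $|\beta|\ge 1$ is treated symmetrically, writing $\beta=\beta'+e_j$ and using $\partial_{\xi_j}\varphi\in\SG^{1,0}_{1,1}$.

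There is essentially no obstacle: the only thing to verify is that the orders balance correctly, and a quick bookkeeping shows that each $x$-derivative pays one unit of $x$-order and each multiplication by a first derivative of $\varphi$ raises the $\xi$-order by one (or, symmetrically, raises the $x$-order by one when differentiating in $\xi$), which is exactly the content of the claim $b_{\alpha,\beta}\in\SG^{|\beta|,|\alpha|}_{1,1}$. One can, if desired, extract from the induction an explicit Fa\`a di Bruno--type representation of $b_{\alpha,\beta}$ as a finite linear combination of products of derivatives of $\varphi$, but this explicit form is not needed here; tracking only the $\SG$-orders suffices.
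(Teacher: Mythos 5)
Your proof is correct and follows exactly the route the paper intends: the paper states that Lemma \ref{lemma:2.1} ``follows by straight-forward computations, using induction'' and leaves the details to the reader, and your induction on $|\alpha|+|\beta|$, with the order bookkeeping $\partial_{x_j}\varphi\in\SG^{0,1}_{1,1}$ and $\partial_{\xi_j}\varphi\in\SG^{1,0}_{1,1}$, supplies precisely those details.
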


\par

\subsection{Generalised Fourier integral operators of $\SG$
type}\label{subs:2.2}
In analogy with the definition of generalized $\SG$
pseudo-differential operators, recalled in Subsection
\ref{subs:1.1}, we define the class of Fourier integral
operators we are interested in terms of their distributional
kernels. These belong to a class of tempered
oscillatory integrals, studied in \cite{CoSch}. Thereafter we
prove that they posses convenient mapping properties.

\par

\begin{defn}\label{def:sgfios}
Let $\omega \in \mascP_{r,\rho}(\rr{2d})$ satisfy
\eqref{WeightPhaseCond}, $r,\rho\ge0$, $r+\rho>0$,
$\varphi\in\Ph$, $a,b\in\SG^{(\omega)}_{r,\rho}(\rr{2d})$.
\begin{enumerate}
\item The generalized Fourier integral operator $A=\op _\fy (a)$ of
\emph{$\SG$ type I} (\emph{$\SG$ FIOs of type I}) with phase $\varphi$
and amplitude $a$ is the linear continuous operator from $\cS(\rr d)$ to
$\cS^\prime(\rr d)$ with distribution kernel $K_A\in
\cS^\prime(\rr {2d})$ given by
\[
	K_A(x,y)=(2\pi)^{-d/2}(\cF_2(e^{i\fy}a))(x,y)\text ;
\]

\vrum

\item The generalized Fourier integral operator $B=\op _\fy ^*(b)$ of
\emph{$\SG$ type II} (\emph{$\SG$ FIOs of type II}) with phase
$\varphi$ and amplitude $b$ is the linear continuous operator
from $\cS(\rr d)$ to $\cS^\prime(\rr d)$ with distribution kernel
$K_B\in \cS^ \prime(\rr {2d})$ given by
\[
	K_B(x,y)=(2\pi)^{-d/2}(\cF^{-1}_2(e^{-i\fy }\overline b))(y,x).
\]
\end{enumerate}
\end{defn}

\par

Evidently, if $u \in \cS(\rr d)$, and $A$ and $B$ are the operators in Definition
\ref{def:sgfios}, then
\begin{align}
Au(x) &= \op_\varphi(a)u(x) = (2\pi)^{-d/2}\int e^{i \varphi(x, \xi)}
\, a(x, \xi) \, ({\cF{u}})(\xi)\,d \xi,\label{eq:0.1}
%
% Changed (2\pi)^{-d} into (2\pi)^{-d/2}. /JT
%
%
%
\intertext{and}
Bu(x) &= \op^*_\varphi(b)u(x)\notag
\\[1ex]
 &=(2\pi)^{-d} \iint e^{i(\langle x, \xi)- \varphi(y, \xi))}
                            \, \overline{b(y, \xi)} \, u(y) \,dy d\xi.\label{eq:0.1.0}
\end{align}
%
%
% There is a non-consistency in notations. Sometimes, f and g are used to denote a function
% or distribution on R^d (especially in Section 1), and sometimes u and v are used to denote
% similar things. We shall try to keep with only one of them. I suggest that we use 
% f and g, since this is the style in our earlier papers. /JT
%
%
%
%
%
%
%

\par

\begin{rem}\label{rem:sgsymm}
In the sequel the formal ($L^2$-)adjoint of an operator $Q$ is denoted by $Q^*$.
By straightforward computations it follows that the $\SG$ type I
and $\SG$ type II operators are formal adjoints to each others, provided the
amplitudes and phase functions are the same. That is, if $b$ and $\varphi$
are the same as in Definition \ref{def:sgfios}, then
$\op^*_\varphi(b)=\op_\varphi(b)^*$.

\par

%For a function $c(x,\xi)$, let $^{t}c(x,\xi)=c(\xi,x)$ and $(c^*)(x,\xi)=\overline{c(\xi,x)}$.
%
% I have moved the sentence above to just before Lemma 2.4, because
% {}^tR_1 is used already here. /JT
%
%
Obviously, for any $\omega \in \mascP_{r,\rho}(\rr {2d})$,
$^{t}\omega=\omega^*$ is also an admissible weight which belongs to
$\mascP_{\rho,r}(\rr {2d})$. Similarly, for arbitrary
$\varphi \in \Ph$ and $a \in \SG^{(\omega)}_{r,\rho}(\rr {2d})$, we have
$^{t}\varphi=\varphi ^*\in\Ph$ and $^{t}a, a^*\in\SG ^{(\omega ^*)} _{\rho,r}(\rr {2d})$.
Furthermore, by Definition \ref{def:sgfios} we get
\begin{equation}\label{eq:typeI-II}
\begin{gathered}
\op_\varphi^*(b) =  \cF^{-1} \circ \op_{- \varphi ^*}(b^*) \circ \cF^{-1}
\\[1ex]
\Longleftrightarrow
\\[1ex]
\op_{\varphi}(a) =  \cF \circ \op _{- \varphi ^*}^*(a^*) \circ \cF.
\end{gathered}
\end{equation}
%
% Removed constants with (2\pi )^{+/- d}, since we have
% L^2-normalized Fourier transforms.  Also added ^* in
% one of the formulas (misprint). /JT
%
%
%
%
\end{rem}

\par

The following result shows that type I and type II operators
are linear and continuous from $\cS (\rr d)$ to itself, and
extendable to linear and continuous operators from
$\cS^\prime(\rr d)$ to itself.

\par

\begin{thm}
\label{thm:2.1}
Let $a$, $b$ and $\varphi$ be the same as in Definition \ref{def:sgfios}.
Then $\op _\fy (a)$ and $\op _\fy ^*(b)$ are linear and continuous
operators on $\cS (\rr d)$, and uniquely extendable to linear and
continuous operators on $\cS '(\rr d)$.
\end{thm}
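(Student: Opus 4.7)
The plan is to prove continuity of $\op_\fy(a)$ on $\cS(\rr d)$ directly by integration by parts in $\xi$, and then deduce the remaining three claims from the Fourier conjugation \eqref{eq:typeI-II} and the adjoint relation $\op_\fy^*(a)=\op_\fy(a)^*$ recalled in Remark~\ref{rem:sgsymm}.

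To establish continuity of $\op_\fy(a)\colon\cS(\rr d)\to\cS(\rr d)$, fix $u\in\cS(\rr d)$ and a multi-index $\alpha$. By Lemma~\ref{lemma:2.1} combined with Leibniz, one has $\partial_x^\alpha[e^{i\fy(x,\xi)}a(x,\xi)]=e^{i\fy(x,\xi)}c_\alpha(x,\xi)$ with $c_\alpha\in\SG^{(\omega\cdot\vartheta_{0,|\alpha|})}_{r,\rho}(\rr{2d})$. Using $R_1 e^{i\fy}=e^{i\fy}$ from Lemma~\ref{lemma:2.2}\,(1), $l$-fold integration by parts in $\xi$ gives
\begin{equation*}
\partial_x^\alpha[\op_\fy(a)u](x)
=(2\pi)^{-d/2}\int e^{i\fy(x,\xi)}\,({}^tR_1)^l\bigl[c_\alpha(x,\xi)\widehat u(\xi)\bigr]\,d\xi.
\end{equation*}
Lemma~\ref{lemma:2.2}\,(3),(4) and Leibniz applied to the product $c_\alpha\cdot\widehat u$ then yield the pointwise bound
\begin{equation*}
\bigl|({}^tR_1)^l[c_\alpha\widehat u](x,\xi)\bigr|
\lesssim
\eabs x^{-2l}\omega(x,\xi)\eabs\xi^{|\alpha|}\sum_{|\gamma|\le 2l}|\partial^\gamma\widehat u(\xi)|,
\end{equation*}
since each of the $l$ factors of $\Div$ contributes the decay $\eabs x^{-2}$, while the $(1-\Delta_\xi)$-factors act on $c_\alpha$ (without worsening its $\xi$-order beyond $|\alpha|$) or on $\widehat u$. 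As $\omega$ is of polynomial type, $\omega(x,\xi)\lesssim \eabs x^M\eabs\xi^M$ for some $M\ge0$, and as $\widehat u\in\cS(\rr d)$, the factor $\eabs\xi^{M+|\alpha|+d+1}|\partial^\gamma\widehat u(\xi)|$ is bounded by a Schwartz seminorm $p_{N'}(u)$ for all $|\gamma|\le 2l$. Choosing $l$ with $2l\ge N+M$, we conclude $\eabs x^N|\partial_x^\alpha \op_\fy(a)u(x)|\lesssim p_{N'}(u)$ uniformly in $x$, proving continuity on $\cS(\rr d)$.

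To deduce the remaining claims, apply \eqref{eq:typeI-II}: since $-\fy^*\in\Ph$, $b^*\in\SG^{(\omega^*)}_{\rho,r}(\rr{2d})$, and $\cF,\cF^{-1}$ are continuous on $\cS(\rr d)$, the previous step also gives $\op_\fy^*(b)\colon\cS(\rr d)\to\cS(\rr d)$ continuously. Finally, the transposition rules $\op_\fy(a)^*=\op_\fy^*(a)$ and $\op_\fy^*(b)^*=\op_\fy(b)$ from Remark~\ref{rem:sgsymm} extend both operators uniquely to continuous operators on $\cS'(\rr d)$. The main obstacle is the bookkeeping in the bound for $({}^tR_1)^l[c_\alpha\widehat u]$, where one must track how the derivatives produced by $({}^tR_1)^l$ distribute between $c_\alpha$ and $\widehat u$; this is routine thanks to Lemma~\ref{lemma:2.2}, and the non-degeneracy of the divisor $\eabs{\fy'_\xi}^2-i\Delta_\xi\fy$ appearing in $\Div$ follows immediately from the phase condition $\eabs{\fy'_\xi}\asymp\eabs x$ in \eqref{eq:2.0}, since $|\Delta_\xi\fy|\lesssim\eabs x\eabs\xi^{-1}$ is dominated by $\eabs{\fy'_\xi}^2\asymp\eabs x^2$.
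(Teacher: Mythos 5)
Your proof is correct and follows essentially the same route as the paper: integration by parts with the regularizing operator $R_1$ of Lemma \ref{lemma:2.2}, combined with Lemma \ref{lemma:2.1} and the polynomial boundedness of $\omega$, to obtain the Schwartz seminorm estimates, and then duality via Remark \ref{rem:sgsymm} for the extension to $\cS '(\rr d)$. The only (minor) variation is that you deduce the type II case from the type I case through the conjugation formula \eqref{eq:typeI-II}, instead of repeating the regularization argument with the analogue of $R_1$ acting in the $y$-variable, which is what the paper intends when it leaves the type II details to the reader.
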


\par

\begin{proof}
First we consider the operator $\op _\fy (a)$. By differentiation under
the integral sign,  using Lemma \ref{lemma:2.1} and the facts
that differentiations and multiplications by polynomials maps $\SG$
classes into $\SG$ classes, it is enough to prove that
$$
|Au(x)|\lesssim p(u),\quad u\in \cS (\rr d),
$$
for some seminorm $p$ on $\cS(\rr d)$. By a regularization argument,
using the operator $R_1$ defined in \eqref{eq:2.2.1},
in view of Lemma \ref{lemma:2.2} we find, for arbitrary $l$ and 
$\mathfrak{D}=\norm{\varphi^\prime_{\xi} }^2 - i \Delta_{\xi}\varphi$,
\begin{align*} 
&
Au(x) = 
     (2\pi)^{-d}\!\!\!\int e^{i \varphi(x, \xi)} ({^t}R_1)^l[ a(x, \xi)
               (\cF u)(\xi)] d\xi =
\\
&
= (2\pi)^{-d}\!\!\!\int e^{i \varphi(x, \xi)} \!\!\left\{
               \frac{a(x, \xi)}{(\mathfrak{D}(x,\xi))^l}{(\cF u)}(\xi) +             
               Q_l(\cD , \Delta_{\xi}) 
                 \left[
                    a(x, \xi) {(\cF u)}(\xi)
                 \right]
               \right \}
               d \xi
\\
\label{eq:2.5}
& = (2\pi)^{-d}\!\!\!\int e^{i \varphi(x, \xi)} \!\!\left [
               \frac{a(x, \xi)}{(\mathfrak{D}(x,\xi))^l}{(\cF u)}(\xi) +             
               \sum_{|\gamma| \le 2l} c_{\gamma}(x,\xi)
               D^\gamma {(\cF u)}(\xi)
               \right ]
               d \xi                                    
\end{align*}
with coefficients $c_{\gamma} \in \SG^{(\omega \cdot \vartheta
_{-2l,-2})}_{r,\rho}(\rr {2d})$ depending only 
on $a$ and $\mathfrak{D}$, and $\dfrac{a(x, \xi)}{(\mathfrak{D}(x,\xi))^l}\in
\SG^{(\omega\cdot\vartheta_{-2l,0})}_{r,\rho}(\rr {2d})$. Since $\omega$
is polynomially bounded and
$u\in \cS (\rr d)$, it follows that, for any $l$ and a suitable $m\in \mathbf R$,
there is a semi-norm $p$ on $\cS$ such that
\[
	|Au(x)|\lesssim \norm{x}^{m-2l} \, p(u) \int \norm{\xi}^{-d-1} d \xi \lesssim p(u),
\]
as desired, choosing $l$ and $k$ large enough.
The $\cS$-continuity of the operators of type II follows by similar argument.
The details are left for the reader.

\par

Finally, the continuity and uniqueness on $\cS ^\prime(\rr d)$ of the operators
$\op _\fy (a)$ and $\op _\fy ^*(b)$
now follows by duality, recalling Remark \ref{rem:sgsymm}.
\end{proof}

\subsection{Compositions with pseudo-differential operators of $\SG\!$ type.}
\label{subs:2.3}
The composition theorems presented in this and the subsequent subsections
are variants of those originally appeared in \cite{coriasco}. We include
anyway some of their proofs, focusing on the role of the parameters in the 
classes of the involved amplitudes and symbols, as well as on 
the different notion of asymptotic expansions needed here, see \cite{CoTo}.
The notation used in the statements of the composition theorems are those
introduced in Subsections \ref{subs:1.3} and \ref{subs:2.1}.

\par

\begin{thm}
\label{thm:0.1}
Let $r_j,\rho _j\in [0,1]$, $\varphi \in \Ph$ and let $\omega _j
\in \mascP_{r_j,\rho _j}(\rr {2d})$, $j=0,1,2$, be such
that
$$
\rho _2=1,
\quad r_0=\min\{r_1,r_2,1\} ,\quad \rho _0=\min\{ \rho_1,1\},
\quad \omega _0 =\omega_1\cdot (\Theta _{2,\fy}\omega _2),
$$
and $\omega _2\in \mathscr{P}_{r,1}(\rr{2d})$ is
$(\phi,2)$-invariant with respect to $\phi\colon
\xi\mapsto\varphi^\prime_x(x,\xi)$.
Also let $a \in \SG^{(\omega _1)} _{r_1,\rho_1}(\rr {2d})$,
$p \in \SG^{(\omega _2)}_{r_2,1}(\rr {2d})$, and let
\begin{equation}
\label{eq:0.4}
\psi(x,y,\xi) = \varphi(y,\xi) - \varphi(x,\xi) -
\scal{ y - x}{\varphi '_x(x,\xi)}.
\end{equation}
Then
\begin{alignat*}{2}
\op(p) \circ \op_\varphi(a) &= \op_{\varphi}(c) \operatorname{Mod}
\op _\varphi (\SG ^{(\omega \vartheta _{0,-\infty })}_0 ),& \quad r_1=0,
\\[1ex]
\op(p) \circ \op_\varphi(a) &= \op_{\varphi}(c) \operatorname{Mod}
\op (\mathscr S ),& \quad r_1>0,
\end{alignat*}
where $c \in \SG^{(\omega _0)}_{r_0,\rho _0}(\rr {2d})$
admits the asymptotic expansion
\begin{equation}
\label{eq:0.3}
c(x,\xi) \sim \sum_{\alpha} \frac{i^{|\alpha|}}{\alpha!} 
(D^\alpha_\xi p)(x, \varphi '_x(x,\xi))
\,D^\alpha_y \!\!\left[ e^{i \psi(x,y,\xi)} a(y,\xi) \right]_{y=x}.
\end{equation}
\end{thm}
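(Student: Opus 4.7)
My plan is to compute $\op(p)\circ\op_\varphi(a)$ directly, reduce it to a Type I $\SG$ Fourier integral operator with the same phase $\varphi$ and an oscillatory-integral amplitude, and then extract the asymptotic series \eqref{eq:0.3} by Taylor expansion and integration by parts. For $u\in\cS(\rr d)$ I would write
\begin{equation*}
(\op(p)\op_\varphi(a)u)(x)=(2\pi)^{-d-d/2}\iiint e^{i\langle x-y,\eta\rangle+i\varphi(y,\xi)}p(x,\eta)a(y,\xi)\widehat u(\xi)\,dy\,d\eta\,d\xi,
\end{equation*}
regularizing via the operator $R_1$ of Lemma \ref{lemma:2.2} so that the interchange of integrations is justified. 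Inserting the Taylor decomposition $\varphi(y,\xi)=\varphi(x,\xi)+\scal{y-x}{\varphi'_x(x,\xi)}+\psi(x,y,\xi)$ built into \eqref{eq:0.4}, and then translating $\eta\to\eta+\varphi'_x(x,\xi)$, one factors out $e^{i\varphi(x,\xi)}\widehat u(\xi)$ and identifies the remaining inner integral as
\begin{equation*}
c(x,\xi)=(2\pi)^{-d}\iint e^{i\langle x-y,\eta\rangle+i\psi(x,y,\xi)}\,p(x,\eta+\varphi'_x(x,\xi))\,a(y,\xi)\,dy\,d\eta.
\end{equation*}

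To obtain \eqref{eq:0.3}, I would Taylor expand $p(x,\cdot)$ about $\varphi'_x(x,\xi)$: each monomial $\eta^\alpha$ rewrites as $(-D_y)^\alpha e^{i\scal{x-y}\eta}$, integration by parts in $y$ transfers $D_y^\alpha$ onto $e^{i\psi(x,y,\xi)}a(y,\xi)$, and the resulting $\eta$-Fourier integration collapses to $(2\pi)^d\delta(y-x)$, leaving precisely the $\alpha$-th term $c_\alpha(x,\xi)=\tfrac{i^{|\alpha|}}{\alpha!}(D^\alpha_\xi p)(x,\varphi'_x(x,\xi))\,D^\alpha_y[e^{i\psi}a]_{y=x}$. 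To verify that each $c_\alpha$ belongs to a correctly decreasing weighted class, I would invoke Lemma \ref{lemma:omegainv} and the $(\phi,2)$-invariance of $\omega_2$ to locate $(D^\alpha_\xi p)(x,\varphi'_x(x,\xi))$ in the class governed by $\Theta_{2,\varphi}\omega_2\cdot\vartheta_{0,-|\alpha|}$, using $\eabs{\varphi'_x(x,\xi)}\asymp\eabs\xi$ from \eqref{eq:2.0}. By construction $\psi(x,x,\xi)=0$ and $\partial_y\psi(x,y,\xi)|_{y=x}=0$; applying Fa\`a di Bruno to $D^\alpha_y[e^{i\psi}a]|_{y=x}$ leaves only products in which every $\psi$-factor is a derivative of order $\ge2$, each contributing $\SG$-order $(-1,1)$ since $\varphi\in\SG^{1,1}_{1,1}$, so a product of two such factors yields a net gain $(-1,0)$ that pairs against the $\eabs\xi^{-|\alpha|}$ supplied by $D^\alpha_\xi p$.

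With these termwise estimates, Proposition \ref{propasymp} then assembles a genuine $c\in\SG^{(\omega_0)}_{r_0,\rho_0}(\rr{2d})$ with $c\sim\sum_\alpha c_\alpha$. The residual between $\op(p)\circ\op_\varphi(a)$ and $\op_\varphi(c)$ comes from the $N$-th Taylor remainder of $p$, whose oscillatory integral I would control using $R_1$ and $\Div$ of Lemma \ref{lemma:2.2}: when $r_1>0$ the $\eabs x$-gain from the Taylor remainder combines with the $\eabs\xi$-gain to furnish an amplitude in $\mathscr S(\rr{2d})$, so the mod term lies in $\op(\mathscr S)$; when $r_1=0$ only the $\xi$-decay survives, and the residual is a Type I $\SG$ FIO with amplitude in $\SG^{(\omega_0\vartheta_{0,-\infty})}_0(\rr{2d})$, matching the dichotomy of Proposition \ref{propasymp}. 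The hard part is the precise weight bookkeeping rather than any isolated formal manipulation: one must justify Fubini and the $\eta$-translation uniformly across $\omega_1,\omega_2,\omega_0$, extract from the double vanishing of $\psi$ at $y=x$ the correct simultaneous decay of $D^\alpha_y[e^{i\psi}a]|_{y=x}$ in both $\eabs x$ and $\eabs\xi$, and correctly identify the smoothing class depending on whether $r_1$ vanishes—it is here that the $(\phi,2)$-invariance of $\omega_2$ via Lemma \ref{lemma:omegainv} is essential to ensure $\omega_0=\omega_1\cdot\Theta_{2,\varphi}\omega_2$ belongs to $\mascP_{r_0,\rho_0}(\rr{2d})$.
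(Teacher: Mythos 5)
Your overall strategy coincides with the paper's: reduce $\op(p)\circ\op_\varphi(a)$ to $\op_\varphi(c)$ with $c$ the oscillatory integral you display (which is exactly the paper's amplitude after the substitution $\eta=\varphi'_x(x,\xi)+\theta$), Taylor-expand $p$ about $\varphi'_x(x,\xi)$, and control the terms and the remainder. But there is a genuine gap: you never localize, and both localizations the paper uses are indispensable. First, a cut-off $\chi\in\Xi^\Delta(\varepsilon_1)$ near the diagonal $y=x$ is needed before anything involving $\psi$ can be estimated: Lemma \ref{lemma:3.2} controls $D^\alpha_y e^{i\psi}$ only for $|y-x|\le\varepsilon_1\norm{x}$, and for $|y-x|\gtrsim\norm{x}$ the phase $\psi$ grows and your term-by-term bounds fail. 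The off-diagonal piece must be handled separately by repeated integration by parts in $\eta$ and $y$ (the operators $R_2$ and $\widetilde R_1$ of Lemmas \ref{lemma:3.4} and \ref{lemma:3.5}), yielding a Schwartz remainder; your ``delta-function collapse'' only accounts for the polynomial part of the Taylor expansion and says nothing about this region.

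Second, the Taylor remainder $r_\alpha(x,\xi,\theta)=M\int_0^1(1-t)^{M-1}(D^\alpha_\xi p)(x,\varphi'_x(x,\xi)+t\theta)\,dt$ is evaluated at points $\varphi'_x(x,\xi)+t\theta$ that range over all of $\rr d$; the bound by $(\Theta_{2,\fy}\omega_2)(x,\xi)\norm{\xi}^{N_0-|\alpha|}$ that you need holds only where $|\theta|\lesssim\varepsilon_2\norm{\xi}$, i.e.\ after inserting the second cut-off $\chi_{0,\xi}$. The complementary high-frequency region requires yet another integration-by-parts argument (the operators $R_4$ and $R_3$ acting through the phase $f=\scal{y}{\theta}-\psi$, with the lower bound \eqref{eq:3.37} for $|f'_y|$, which in turn forces $\varepsilon_1$ to be chosen small). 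These two regions are where most of the work in the paper's proof lies; the operators $R_1$ and $\Div$ of Lemma \ref{lemma:2.2}, which differentiate in $\xi$ only, do not by themselves produce the required simultaneous decay of the remainder in $x$ and $\xi$. Your identification of the terms $c_\alpha$, the use of Lemma \ref{lemma:omegainv} together with the $(\phi,2)$-invariance of $\omega_2$, and the appeal to Proposition \ref{propasymp} are all correct and match the paper; what is missing is the entire localization apparatus that makes the remainder estimates rigorous.
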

%

%\begin{thm}
%\label{thm:0.1}
%Let $\varphi \in \Ph$, $a \in \SG^{(\omega_0)}_{r_0,\rho_0}(\rr {2d})$, and 
%$p \in \SG^{(\omega)}_{r,1}(\rr {2d})$. Assume that $\omega\in\mathscr{P}_{r,1}(\rr{2d})$ is $(\phi,2)$-invariant with
%respect to $\phi\colon \xi\mapsto\varphi^\prime_x(x,\xi)$.
%Then, the composed operator
%$H = \op(p)\circ\op_\varphi(a)$ is, modulo smoothing remainders, an $\SG$ FIO of type I. 
%Explicitly, $H=\op_{\varphi}(h)$ with
%amplitude $h \in \SG^{(\widetilde{\omega})}_{\tilde{r},\tilde{\rho}}(\rr {2d})$, 
%$\widetilde{\omega}=\omega_{2,\varphi}\omega_0$,
%$\tilde{r}=\min\{r,r_0,1\}$, $\tilde{\rho}=\min\{\rho_0,1\}$, admitting the asymptotic expansion
%%
%%
%\begin{equation}
%\label{eq:0.3}
%h(x,\xi) \sim \sum_{\alpha} \frac{i^{|\alpha|}}{\alpha!} 
%(D^\alpha_\xi p)(x, \varphi '_x(x,\xi))
%\,D^\alpha_y \!\!\left[ e^{i \psi(x,y,\xi)} a(y,\xi) \right]_{y=x},
%\end{equation}
%where
%%
%%
%\begin{equation}
%\label{eq:0.4}
%\psi(x,y,\xi) = \varphi(y,\xi) - \varphi(x,\xi) -
%\scal{ y - x}{\varphi '_x(x,\xi)}.
%\end{equation}
%%
%\end{thm}
%%

As usual, we split the proof of Theorem \ref{thm:0.1} into various intermediate steps. We first need
an expression for the derivatives of the exponential functions appearing in \eqref{eq:0.3}. Again,
Lemma \ref{lemma:3.1} is a special case of the Fàa di Bruno
formula, and can be proved by induction. For the proof of Lemma \ref{lemma:3.2}, see \cite{coriasco}.
Then, in view of these two results, in Lemma \ref{lemma:3.3} we can prove that the terms which appear
in the right-hand side of \eqref{eq:0.3} indeed give a generalized $\SG$ asymptotic expansion,
in the sense described in Definition \ref{def:gensgasexp}. and \cite{CoTo}.

\par

\begin{lemma}
\label{lemma:3.1}
Let $\fy \in C^\infty (\rr {2d})$, and let $\psi$ be as in \eqref{eq:0.4}.
If $\alpha \in \nn d$ satisfies $|\alpha |\ge 1$, then
\begin{gather}
D^{\alpha}_y e^{i \psi} =
\tau_{\alpha} e^{i \psi}\notag
\intertext{where}
\tau _\alpha %=
%\sum_j c_{j}\left( \varphi '_y - \varphi '_x \right)^{\theta_j}
%\prod_{k=1}^{N_j} D ^{\gamma_{jk}}_y \varphi \notag
=
\left ( \varphi '_y - \varphi '_x \right ) ^\alpha
+ 
\sum _{j} c_{j}
\left ( \varphi '_y - \varphi '_x\right)^{\delta _{j}}
      \prod_{k=1}^{N_j}
      D ^{\beta_{jk}}_y \varphi
%      +
%      \sum_{j} c_{j} \prod_{k=1}^{N_{j}}
%       D ^{\beta _{jk}}_y \varphi
       \label{eq:3.1}
\end{gather}
for suitable constants $c_{j}\in\mathbf R$, and the summation in last sum should be
taking over all multi-indices $\delta _j$ and $\beta _{jk}$ such that
\begin{equation}
\label{eq:3.3}
\delta _{j} + \sum_{k=1}^{N_{j}} \beta_{jk} = \alpha,
\quad \text{and}\quad
|\beta_{jk}| \ge 2.
\end{equation}
%%
%respectively.

\par

%
% I have changed several things here. For example, I have replaced
% theta _j with delta _j, since \theta is used as a variable in the proof of
% Theorem 2.9. /JT
%
In \eqref{eq:3.1},
$\varphi '_{x} = \varphi '_{x}(x,\xi)$, 
$\varphi '_{y} = \varphi '_{y}(y,\xi)$
and
$\partial^{\alpha}_y \varphi = \partial^{\alpha}_y \varphi (y,\xi)$
is to be understood.
%
% For me, the latter part, now commented, should be cancelled. 
% I changed my mind. This text is needed!! /JT
%
%
%
\end{lemma}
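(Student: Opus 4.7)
The plan is to argue by induction on $|\alpha|$, using only the identity
$$
\partial_{y_j}\psi(x,y,\xi) = \partial_{y_j}\varphi(y,\xi) - \partial_{x_j}\varphi(x,\xi) = \bigl(\varphi'_y - \varphi'_x\bigr)_j
$$
which follows immediately from \eqref{eq:0.4}, together with the Leibniz rule.

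For the base case $|\alpha|=1$, I would write $\alpha = e_j$ and compute directly
$$
D_{y_j} e^{i\psi} = (\partial_{y_j}\psi)\, e^{i\psi} = \bigl(\varphi'_y - \varphi'_x\bigr)_j\, e^{i\psi},
$$
which matches the stated form with $\tau_{e_j} = (\varphi'_y - \varphi'_x)^{e_j}$ (the summation in \eqref{eq:3.1} being vacuous, since any non-zero $\beta_{jk}$ would have to satisfy $|\beta_{jk}|\ge 2$, which is incompatible with $\sum \beta_{jk} \le e_j$).

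For the inductive step, assuming the formula for $\alpha$, I would apply $D_{y_j}$ to $\tau_\alpha\, e^{i\psi}$ and obtain the recurrence
$$
\tau_{\alpha + e_j} = \bigl(\varphi'_y - \varphi'_x\bigr)_j\, \tau_\alpha + D_{y_j}\tau_\alpha.
$$
The first summand, expanded via the inductive form of $\tau_\alpha$, immediately contributes the leading pure term $(\varphi'_y - \varphi'_x)^{\alpha + e_j}$ as well as lower terms of the required shape with the multi-index bookkeeping $\delta_{j'} + \sum_k \beta_{j'k} = \alpha + e_j$ satisfied. For the derivative $D_{y_j}\tau_\alpha$, the key point is that when $D_{y_j}$ hits a factor $(\varphi'_y - \varphi'_x)_\ell$ it produces $\partial_{y_j}\partial_{y_\ell}\varphi(y,\xi) = D^{e_j+e_\ell}_y \varphi$, a \emph{second} $y$-derivative of $\varphi$, hence a valid new factor with $|\beta| = 2$; and when it hits an existing factor $D^{\beta_{j'k}}_y \varphi(y,\xi)$ it raises its order to $|\beta_{j'k}|+1 \ge 3$, still compatible with the constraint $|\beta_{j'k}|\ge 2$.

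The main obstacle is just the combinatorial bookkeeping: one must verify that in both sources of new terms the equality $\delta + \sum \beta_{j'k} = \alpha + e_j$ continues to hold (the extra $e_j$ is accounted for either by raising an existing $\beta$ by $e_j$, by producing a new factor of order exactly $2$ containing $y_j$, or by multiplying by the extra $(\varphi'_y - \varphi'_x)_j$ which augments $\delta$). Since no step ever creates a $y$-derivative of $\varphi$ of order less than $2$, the condition $|\beta_{j'k}|\ge 2$ is preserved throughout, completing the induction and yielding \eqref{eq:3.1} with appropriate constants $c_j\in \mathbf R$.
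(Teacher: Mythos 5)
Your induction on $|\alpha|$ via the recurrence $\tau_{\alpha+e_j}=(\varphi'_y-\varphi'_x)_j\,\tau_\alpha+D_{y_j}\tau_\alpha$ is correct, and the bookkeeping you describe (the extra $e_j$ being absorbed either into $\delta$, into a new second-order factor, or into an existing $\beta_{jk}$, so that $|\beta_{jk}|\ge 2$ is preserved and the leading term keeps coefficient $1$) is exactly what is needed. This is essentially the paper's own approach: the authors only remark that the lemma is a special case of the Fa\`a di Bruno formula and ``can be proved by induction,'' and your argument is the natural fleshed-out version of that one-line indication.
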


\par

%\begin{rem}
%\label{rem:3.1}
Note that, by \eqref{eq:3.3}, we have, in each term appearing in \eqref{eq:3.1},
\begin{equation}
\label{eq:3.4}
|\alpha| \ge
\sum_{k=1}^{N_{j}} |\beta_{jk}| \ge 2 N_j
\Rightarrow 
N_j \le \frac{|\alpha|}{2}.
\end{equation}
%
%\end{rem}
%
% I kept the observation but removed the status of Remark. /JT
%
%
%
%

\par

\begin{lemma}
\label{lemma:3.2}
Let $\varphi \in \SG^{1,1}_{1,1}(\rr {2d})$, and let $\psi$ be as in \eqref{eq:0.4}.
If $\alpha \in \nn d$ satisfies $|\alpha | \ge 1$, then
\[
	\begin{aligned}
		\left. \partial^{\alpha}_y e^{i \psi(x,y,\xi)} \right|_{y=x}
		&\in \SG^{[-|\alpha|/2], [|\alpha|/2]}_{1,1}(\rr {2d})
		\\
		&\Rightarrow
		\left. \partial^{\alpha}_y e^{i \psi(x,y,\xi)} \right|_{y=x}
		\lesssim \vartheta_{-|\alpha|/2,|\alpha|/2}(x,\xi).
	\end{aligned}
\]
Moreover, $|y-x| \le \varepsilon_1\norm{x}, \varepsilon_1 \in (0,1)$,
implies that each summand in the right-hand side of \eqref{eq:3.1} can be estimated by the product
of a suitable power $|y-x|^{m_0}$ times a weight of the form $\norm{x}^m\norm{\xi}^\mu$, with
$0\le m_0\le\mu\le|\alpha|$, $m\le-\frac{|\alpha|}{2}$.
\end{lemma}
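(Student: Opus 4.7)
The plan is to apply Lemma \ref{lemma:3.1} and exploit the fact that $\varphi'_y(y,\xi)-\varphi'_x(x,\xi)$ vanishes on the diagonal $y=x$, combined with the $\SG^{1,1}_{1,1}$ estimates for the derivatives of $\varphi$.

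First I apply Lemma \ref{lemma:3.1} to write $\partial^\alpha_y e^{i\psi}=\tau_\alpha\,e^{i\psi}$, with $\tau_\alpha$ as in \eqref{eq:3.1}. Since $\psi(x,x,\xi)=0$, the exponential factor equals $1$ at $y=x$. The crucial observation is that, by the very definition \eqref{eq:0.4} of $\psi$, the difference $\varphi'_y(y,\xi)-\varphi'_x(x,\xi)$ vanishes at $y=x$; hence, at $y=x$, the leading term $(\varphi'_y-\varphi'_x)^\alpha$ drops out (as $|\alpha|\ge 1$) and in the sum over $j$ only the indices with $\delta_j=0$ survive, which forces $\sum_k\beta_{jk}=\alpha$ with $|\beta_{jk}|\ge 2$. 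For each surviving summand, $\varphi\in\SG^{1,1}_{1,1}$ gives $\partial_y^{\beta_{jk}}\varphi(y,\xi)\big|_{y=x}\in\SG^{1-|\beta_{jk}|,\,1}_{1,1}(\rr{2d})$, and the product over $k=1,\dots,N_j$ then lies in $\SG^{N_j-|\alpha|,\,N_j}_{1,1}(\rr{2d})$. Using $2N_j\le|\alpha|$ from \eqref{eq:3.4}, each such product is contained in $\SG^{-|\alpha|/2,\,|\alpha|/2}_{1,1}(\rr{2d})$, which yields the first assertion and in particular the pointwise bound by $\vartheta_{-|\alpha|/2,\,|\alpha|/2}$.

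For the \emph{moreover} clause, rather than setting $y=x$, I keep the localization $|y-x|\le\varepsilon_1\langle x\rangle$ and use the first-order Taylor expansion
\[
\varphi'_y(y,\xi)-\varphi'_x(x,\xi)=\left(\int_0^1\varphi''_{xx}(x+t(y-x),\xi)\,dt\right)(y-x),
\]
together with $\langle x+t(y-x)\rangle\asymp\langle x\rangle$ uniformly in $t\in[0,1]$, which also gives $\langle y\rangle\asymp\langle x\rangle$. Since $\varphi\in\SG^{1,1}_{1,1}$ yields $|\varphi''_{xx}(z,\xi)|\lesssim\langle z\rangle^{-1}\langle\xi\rangle$, we obtain $|\varphi'_y-\varphi'_x|\lesssim|y-x|\,\langle x\rangle^{-1}\langle\xi\rangle$, and consequently $|\varphi'_y-\varphi'_x|^{|\delta_j|}\lesssim|y-x|^{|\delta_j|}\langle x\rangle^{-|\delta_j|}\langle\xi\rangle^{|\delta_j|}$. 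Combining this with $|\partial_y^{\beta_{jk}}\varphi(y,\xi)|\lesssim\langle x\rangle^{1-|\beta_{jk}|}\langle\xi\rangle$ and multiplying out, each summand in \eqref{eq:3.1} is controlled by
\[
|y-x|^{|\delta_j|}\,\langle x\rangle^{N_j-|\alpha|}\,\langle\xi\rangle^{|\delta_j|+N_j}.
\]
The required inequalities $0\le m_0=|\delta_j|\le\mu=|\delta_j|+N_j\le|\alpha|$ and $m=N_j-|\alpha|\le-|\alpha|/2$ then follow directly from \eqref{eq:3.3} and $N_j\le(|\alpha|-|\delta_j|)/2$, the latter being a refined form of \eqref{eq:3.4}.

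The only real difficulty is combinatorial bookkeeping: one must carefully reconcile the three constraints $|\beta_{jk}|\ge 2$, $\delta_j+\sum_k\beta_{jk}=\alpha$, and $N_j\le|\alpha|/2$, so as to read off that every product of derivatives contributes at most order $-|\alpha|/2$ in $x$ and at most $|\alpha|/2$ in $\xi$, uniformly over the surviving summands.
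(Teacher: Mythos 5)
Your proof is correct and follows exactly the route the paper intends: the paper itself omits the proof (deferring to \cite{coriasco}) but sets up Lemma \ref{lemma:3.1} together with \eqref{eq:3.3}--\eqref{eq:3.4} precisely so that the diagonal vanishing of $\varphi'_y-\varphi'_x$ kills all terms with $\delta_j\neq 0$, leaving products of at least second-order derivatives of $\varphi$ whose orders are controlled by $N_j\le|\alpha|/2$, and so that the off-diagonal Taylor estimate $|\varphi'_y-\varphi'_x|\lesssim|y-x|\,\norm{x}^{-1}\norm{\xi}$ on $|y-x|\le\varepsilon_1\norm{x}$ yields the stated powers $m_0=|\delta_j|$, $\mu=|\delta_j|+N_j$, $m=N_j-|\alpha|$. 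Your bookkeeping of the constraints is accurate, so this is a complete and faithful reconstruction of the omitted argument.
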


\par

\begin{lemma}
\label{lemma:3.3}
Let $\varphi \in \Ph$, $\psi$ be as in \eqref{eq:0.4}, and let
$a$, $p$, $\omega _j$, $r_j$ and $\rho _j$, $j=0,1,2$, be as
in Theorem \ref{thm:0.1}. Then
%
% Performed some changes here. /JT
%
%
%%
\begin{equation}
\label{eq:3.8}
\begin{gathered}
\sum _{\alpha} \frac{c_{\alpha}(x,\xi)}{\alpha!},
%
% I have recently canceld a factor i^{|\alpha |} here because in my opinion, this
% factor is already included in the definition of c_\alpha here below. Please check./JT
%
%
\\[1ex]
\text{with}\quad 
c_{\alpha}(x,\xi) = i^{|\alpha|}
(D^\alpha_\xi p)(x, \varphi^\prime_x(x,\xi))
\,
D^\alpha_y \!\!\left[ e^{i \psi(x,y,\xi)} a(y,\xi) \right]_{y=x}
\end{gathered}
\end{equation}
is a generalized $\SG$ asymptotic expansion which defines an amplitude 
$c \in \SG^{(\omega _0)}_{r_0,\rho _0}(\rr {2d})$, modulo a remainder
of the type described in \eqref{eq:gensgasexp}. 
\end{lemma}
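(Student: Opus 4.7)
The strategy is to reduce the statement to a direct application of Proposition \ref{propasymp}. Group the formal series by the order of differentiation, setting $a_j:=\sum_{|\alpha|=j}c_\alpha/\alpha!$; it then suffices to exhibit non-positive sequences $\{s_j\}$, $\{\sigma_j\}$ with the limiting behaviour prescribed by Proposition \ref{propasymp} (i.e., tending to $-\infty$ if the corresponding regularity parameter of $(r_0,\rho_0)$ is positive, and identically zero otherwise), such that $a_j\in\SG^{(\omega_0\vartheta_{s_j,\sigma_j})}_{r_0,\rho_0}(\rr{2d})$.

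First I would expand the inner factor via Leibniz,
\[
D^\alpha_y\bigl[e^{i\psi(x,y,\xi)}\,a(y,\xi)\bigr]_{y=x}
=\sum_{\beta\le\alpha}\binom{\alpha}{\beta}\bigl(D^{\alpha-\beta}_ye^{i\psi}\bigr)_{y=x}(D^\beta_y a)(x,\xi),
\]
and estimate each of the three factors appearing in $c_\alpha$ separately. By Lemma \ref{lemma:3.2}, $(D^{\alpha-\beta}_ye^{i\psi})_{y=x}\in\SG^{[-|\alpha-\beta|/2],[|\alpha-\beta|/2]}_{1,1}(\rr{2d})$. Since $a\in\SG^{(\omega_1)}_{r_1,\rho_1}$, $D^\beta_ya\in\SG^{(\omega_1\vartheta_{-r_1|\beta|,0})}_{r_1,\rho_1}$. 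For the first factor $(D^\alpha_\xi p)(x,\varphi'_x(x,\xi))$, Proposition \ref{prop:3.2} ensures that $\xi\mapsto\varphi'_x(x,\xi)$ is an $\SG$ map with $\SG^0$-parameter dependence; combined with the $(\phi,2)$-invariance of $\omega_2$, the second assertion of Lemma \ref{lemma:omegainv} gives $\Theta_{2,\fy}\omega_2\in\mascP_{r_2,1}(\rr{2d})$, and Remark \ref{rem:3.2} then yields $(D^\alpha_\xi p)(x,\varphi'_x(x,\xi))\in\SG^{((\Theta_{2,\fy}\omega_2)\vartheta_{0,-|\alpha|})}_{r_2,1}(\rr{2d})$.

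Multiplying these three contributions (the minimum of the regularity pairs $(r_1,\rho_1)$, $(1,1)$, $(r_2,1)$ reproduces exactly $(r_0,\rho_0)$), every summand of $c_\alpha$ lies in $\SG^{(\omega_0\vartheta_{s_{\alpha,\beta},\sigma_{\alpha,\beta}})}_{r_0,\rho_0}$ with
\[
s_{\alpha,\beta}\le -r_1|\beta|-|\alpha-\beta|/2\le -\min\{r_1,1/2\}\,|\alpha|,\qquad
\sigma_{\alpha,\beta}\le |\alpha-\beta|/2-|\alpha|\le -|\alpha|/2.
\]
The $\xi$-bound is the key point: it relies on the hypothesis $\rho_2=1$, which guarantees that $D^\alpha_\xi p$ supplies the decay $\eabs\xi^{-|\alpha|}$ needed to dominate the growth $\eabs\xi^{|\alpha-\beta|/2}$ coming from the phase-derivative factor. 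Setting $s_j:=-\min\{r_1,1/2\}\,j$ and $\sigma_j:=-j/2$ (replaced by $0$ in the degenerate cases $r_0=0$, resp.\ $\rho_0=0$), the $a_j$'s satisfy the hypotheses of Proposition \ref{propasymp}, which produces the desired symbol $c\in\SG^{(\omega_0)}_{r_0,\rho_0}(\rr{2d})$ with $c\sim\sum_j a_j$, uniquely determined modulo a remainder of the type specified in \eqref{eq:gensgasexp}. The delicate step is the third one: uniformly for all $\beta\le\alpha$ and after differentiating $c_\alpha$ any number of times in $x$ and $\xi$, one must check that the loss of $\eabs x^{-r_0}$, resp.\ $\eabs\xi^{-\rho_0}$, per derivative remains compatible with the structure of each of the three factors, for which one leans on the $(\phi,2)$-invariance of $\omega_2$ via Lemma \ref{lemma:omegainv} and on the standard $\SG$ composition calculus.
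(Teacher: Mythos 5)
Your proposal is correct and follows essentially the same route as the paper: Leibniz expansion of $D^\alpha_y[e^{i\psi}a]_{y=x}$, the estimate $(D^{\gamma}_ye^{i\psi})_{y=x}\in\SG^{-|\gamma|/2,|\gamma|/2}_{1,1}$ from Lemma \ref{lemma:3.2}, the bound $(D^\alpha_\xi p)(x,\varphi'_x(x,\xi))\in\SG^{((\Theta_{2,\fy}\omega_2)\vartheta_{0,-|\alpha|})}_{r_2,1}$ via the invariance of $\omega_2$, and the resulting gains $\vartheta_{-\min\{r_1,1/2\}|\alpha|,-|\alpha|/2}$ feeding into Proposition \ref{propasymp}. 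Your version merely makes explicit the grouping $a_j=\sum_{|\alpha|=j}c_\alpha/\alpha!$ and the choice of the sequences $s_j,\sigma_j$, which the paper leaves to ``the general properties of the symbolic calculus.''
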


\par

\begin{proof}
Using Lemma \ref{lemma:3.2}, the hypothesis
$a \in \SG^{(\omega _1)}_{r_1,\rho _1}$, and
the properties of the symbolic calculus, we see that
\begin{eqnarray*}
	D^\alpha_y \left[ e^{i \psi(x,y,\xi)} a(y,\xi) \right ]_{y=x}
	&   =   & \left.
	\sum_{0 \le \beta \le \alpha} \binomial{\alpha}{\beta}
                                        D^{\beta}_y e^{i \psi(x,y,\xi)}
                                        D^{\alpha-\beta}_y a(y,\xi)
	\right|_{y=x}
	\\
	& \in & \sum_{0 \le \beta \le \alpha}
	\SG^{(\vartheta_{-|\beta|/2, |\beta|/2})}_{1, 1}
	\cdot
	\SG^{(\omega _0\cdot\vartheta_{-r_1( |\alpha| - |\beta|), 0})}_{r_1, \rho_1}
	\\
	& = & \sum_{0 \le \beta \le \alpha}
	\SG^{(\omega_1\cdot\vartheta_{-r_1 |\alpha| +(r_1-1/2) |\beta|), |\beta |/2})}
	_{\min\{r_1,1\}, \min\{\rho _1,1 \} }
	\\
	&\subseteq & \SG^{(\omega_1\cdot\vartheta_{-\min\{r_1,1/2\} |\alpha|, |\alpha |/2})}
	_{\min\{r_1,1\}, \min\{\rho _1,1 \} }.
\end{eqnarray*}
Using $\varphi \in \Ph$, in particular \eqref{eq:2.0}, and the results in Subsections
\ref{subs:1.3} and \ref{subs:2.1}, we also easily have:
\begin{eqnarray*}
	(D^\alpha_\xi p)(x, \varphi^\prime_x(x,\xi))
	\in
	\SG^{(\Theta _{2,\fy}\omega_{2}\cdot \vartheta_{0, -|\alpha|})}_{r_2, 1}.
\end{eqnarray*}
Summing up, we obtain, for any multi index $\alpha$,
\[
	c_{\alpha}(x,\xi) \in 
	\SG^{(\omega_2\cdot\vartheta_{-\min\{r_1,1/2\}
	|\alpha |, - |\alpha |/2})}_{\min\{r_1,r_2,1\}, \min
	\{\rho _1,1\}},
\]
which proves the lemma, by the hypotheses and the general
properties of the symbolic calculus.
\end{proof}

\par

%\noindent
The next two lemmas are well-known, see, e.g., \cite{Co, coriasco},
and can be proved by induction on $l$.

\par

\begin{lemma}
\label{lemma:3.4}
Let
$$
\Omega  = \sets {(x,y,\eta )\in \rr {3d}}{|x-y|>0},
$$
and let $R_2$ be the operator on $\Omega$, given by
\begin{equation}
	\label{eq:defr2}
	R_2 =  \sum_{j=1}^d \frac{x_j - y_j}{|x - y|^{2}} D_{\eta_j}.
\end{equation}
Then $R_2 e^{i \scal{x - y}{\eta}}$ $= e^{i \scal{x - y}{\eta}}$ when
$(x,y,\eta ) \in \Omega$, and for any positive integer $l$,
\[
(^t R_2)^l = \sum_{|\theta| = l} c_{\theta}
           \frac{(x-y)^\theta}{|x-y|^{2l}} D_{\eta}^\theta,
\]
for suitable coefficients $c_{\theta}$.
\end{lemma}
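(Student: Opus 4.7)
\smallskip
\noindent
\textbf{Proof proposal for Lemma \ref{lemma:3.4}.}
The plan is to verify the two claims by direct computation, combined with a short induction for the iterated transpose.

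For the first identity, I would simply apply each $D_{\eta _j}$ to $e^{i\scal{x-y}{\eta}}$, observing that $D_{\eta _j}e^{i\scal{x-y}{\eta}}=(x_j-y_j)e^{i\scal{x-y}{\eta}}$ on $\Omega$. Plugging this into the definition \eqref{eq:defr2}, one gets
\begin{equation*}
R_2 e^{i\scal{x-y}{\eta}}=\sum_{j=1}^d\frac{(x_j-y_j)^2}{|x-y|^2}\,e^{i\scal{x-y}{\eta}}=\frac{|x-y|^2}{|x-y|^2}\,e^{i\scal{x-y}{\eta}}=e^{i\scal{x-y}{\eta}},
\end{equation*}
where the computation is legitimate exactly because $|x-y|>0$ on $\Omega$.

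For the formula for $(^tR_2)^l$, set $a_j(x,y):=(x_j-y_j)/|x-y|^2$, so that $R_2=\sum_j a_j D_{\eta _j}$. Since each $a_j$ is independent of $\eta$, integration by parts in $\eta$ gives $^tR_2=-\sum_j a_j D_{\eta _j}=-R_2$, and therefore $(^tR_2)^l=(-1)^l R_2^l$. To compute $R_2^l$, I would use that the coefficients $a_j$ commute with every $D_{\eta _k}$ (again because they depend only on $x,y$) and that the $D_{\eta _k}$ mutually commute. Hence the multinomial theorem applies and yields
\begin{equation*}
R_2^l=\Bigl(\sum_{j=1}^d a_j D_{\eta _j}\Bigr)^{\!l}=\sum_{|\theta |=l}\frac{l!}{\theta !}\,a^\theta D_\eta ^\theta =\sum_{|\theta |=l}\frac{l!}{\theta !}\,\frac{(x-y)^\theta}{|x-y|^{2l}}\,D_\eta ^\theta ,
\end{equation*}
so the claimed expression holds with $c_\theta =(-1)^l l!/\theta !$. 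Alternatively, the same formula can be derived by a direct induction on $l$: assuming it for $l$, one applies $^tR_2$ once more, noting that the coefficients $(x-y)^\theta /|x-y|^{2l}$ commute with $D_{\eta _j}$, and regroups the resulting $(x-y)^{\theta +e_j}/|x-y|^{2(l+1)}$ terms.

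I do not anticipate a genuine obstacle here; the only point that deserves care is to keep track of the fact that $a_j$ and $D_{\eta _j}$ commute, which is what collapses the iterated application of $R_2$ into a clean multinomial sum rather than forcing one to deal with commutator terms. The restriction to $\Omega$ ensures that all the rational factors are well defined throughout the computation.
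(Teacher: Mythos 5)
Your proof is correct: the computation $D_{\eta_j}e^{i\scal{x-y}{\eta}}=(x_j-y_j)e^{i\scal{x-y}{\eta}}$, the observation that $^tR_2=-R_2$ because the coefficients are independent of $\eta$, and the multinomial collapse of $R_2^l$ (equivalently the induction on $l$) are exactly what is needed. The paper itself omits the proof, stating only that the lemma is well-known and provable by induction on $l$, so your argument is the intended one, with the explicit constants $c_\theta=(-1)^l l!/\theta!$ as a bonus.
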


\par

\begin{lemma}
\label{lemma:3.4.1}
Let $\Omega \subseteq \rr d$ be open, $f\in C^\infty (\Omega)$ be such that
$|f^\prime_y(y)| \ne 0$, and let
\begin{equation}
\label{eq:3.10.1}
R_3 = \frac{1 }{|f^\prime_{y}(y)|^{2}}
    \sum_{k=1}^d  f^\prime_{y_k}(y) D_{y_k}.
\end{equation}
Then $R_3 e^{i f} = e^{i f}$, and for any positive integer $l$,
\begin{equation}
\label{eq:3.10.2}
                   ({^t R_3})^l = \frac{1}{|f^\prime_y(y)|^{4l}} 
                                \sum_{|\alpha| \le l}
                                   P_{l\alpha}(y) D_y^{\alpha} ,              
\end{equation}
with
\begin{equation}
\label{eq:3.10.3}
P_{l\alpha} = \sum c^{l\alpha}_{\gamma\delta_{1}\cdots
                        \delta_{l}}
                        (f^\prime_y)^\gamma\,
                        D_y^{\delta_{1}} f
                        \cdots  
                        D_y^{\delta_{l}} f,
\end{equation}
where the last sum should be taken over all $\gamma$
and $\delta$ such that
\begin{equation}
\label{eq:3.10.4}
|\gamma| = 2l \quad \text{and}\quad
|\delta_{j}| \ge 1, \;\; \sum_{j=1}^l |\delta_{j}| + |\alpha| = 2l,
\end{equation}
and $c^{l\alpha}_{\gamma\delta_{1}\cdots \delta_{l}}$ are suitable constants.
\end{lemma}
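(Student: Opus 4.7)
The plan is to establish both assertions by direct computation plus induction on $l$. The identity $R_3 e^{if}=e^{if}$ is immediate: since $D_{y_k}e^{if}=f^\prime_{y_k}e^{if}$, one has $R_3 e^{if}=|f^\prime_y|^{-2}\sum_k f^\prime_{y_k}\cdot f^\prime_{y_k}\, e^{if}=e^{if}$. For the formula \eqref{eq:3.10.2}, first compute the formal transpose: since $^t D_{y_k}=-D_{y_k}$ and the transpose of multiplication by a function is the same multiplication, one obtains
\[
^tR_3\, g=-\sum_{k=1}^d D_{y_k}\!\left(\frac{f^\prime_{y_k}}{|f^\prime_y|^2}\,g\right).
\]

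For the base case $l=1$, apply Leibniz and split into the two subcases where $D_{y_k}$ lands either on $g$ or on the coefficient $f^\prime_{y_k}/|f^\prime_y|^2$. Multiplying numerator and denominator by $|f^\prime_y|^2$ normalizes the denominator to $|f^\prime_y|^4$, and one reads off the structure \eqref{eq:3.10.3}: the $g$-subcase contributes the term with $|\alpha|=1$, $|\delta_1|=1$, $|\gamma|=2$ (the extra $f^\prime_{y_j}$ coming from $|f^\prime_y|^2$); the coefficient-subcase, after using $D_{y_k}|f^\prime_y|^{-2}=-|f^\prime_y|^{-4}\sum_j f^\prime_{y_j}D_{y_k}f^\prime_{y_j}$ and $D_{y_k}(f^\prime_{y_k})=-if^{\prime\prime}_{y_ky_k}$, contributes the term with $|\alpha|=0$, $|\delta_1|=2$, $|\gamma|=2$. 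The constraints \eqref{eq:3.10.4} are verified in both cases.

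For the inductive step, assuming the expression \eqref{eq:3.10.2}--\eqref{eq:3.10.4} holds at level $l$, compute $({^tR_3})^{l+1}={^tR_3}\circ({^tR_3})^l$ by applying $^tR_3$ term-by-term to $|f^\prime_y|^{-4l}\,P_{l\alpha}\,D^\alpha g$. Writing
\[
-\sum_{k=1}^d D_{y_k}\!\left(\frac{f^\prime_{y_k}\,P_{l\alpha}}{|f^\prime_y|^{4l+2}}\,D^\alpha g\right),
\]
expand via Leibniz into four families, according to whether the outer $D_{y_k}$ hits (i) $D^\alpha g$, (ii) the denominator $|f^\prime_y|^{-(4l+2)}$, (iii) the factor $(f^\prime_y)^\gamma$ inside $P_{l\alpha}$, or (iv) one of the $D^{\delta_j}f$ factors. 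In each family, introduce a factor $|f^\prime_y|^2/|f^\prime_y|^2$ so the denominator becomes $|f^\prime_y|^{4(l+1)}$. Then identify the newly produced factor: case (i) raises $|\alpha|$ to $|\alpha|+1\le l+1$ and supplies an extra first derivative of $f$ (a new $\delta_{l+1}$ of length $1$); case (ii) supplies $f^\prime_{y_j}\cdot D_{y_k}f^\prime_{y_j}$, hence a new $\delta_{l+1}$ of length $2$; case (iii) replaces a first derivative $f^\prime_{y_i}$ in $(f^\prime_y)^\gamma$ by a second derivative, again contributing a new $\delta_{l+1}$ of length $2$; case (iv) raises $|\delta_{j_0}|$ by one while the ambient $f^\prime_{y_k}$ plays the role of $\delta_{l+1}$ of length $1$. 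In every case one checks directly that $|\gamma^\prime|=2(l+1)$, $|\delta_j|\ge 1$ for all $j$, and $\sum_{j=1}^{l+1}|\delta_j|+|\alpha^\prime|=2(l+1)$, with $|\alpha^\prime|\le l+1$.

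The only delicate point is the bookkeeping: one must verify that the four Leibniz families indeed close up into the three stated constraints \eqref{eq:3.10.4} at level $l+1$, and that the exponent of the denominator jumps by exactly $4$ at each step. The combinatorial constants $c^{l\alpha}_{\gamma\delta_1\cdots\delta_l}$ are then read off the Leibniz expansion and collected, but their precise values play no role in the statement. Lemma \ref{lemma:3.4} is proved by an analogous but simpler induction using $R_2 e^{i\langle x-y,\eta\rangle}=e^{i\langle x-y,\eta\rangle}$ and $^tD_{\eta_j}=-D_{\eta_j}$.
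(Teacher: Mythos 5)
Your proof is correct and follows exactly the route the paper indicates: the paper omits the argument, stating only that the lemma ``can be proved by induction on $l$'', and your computation of ${}^tR_3$ followed by the four-way Leibniz case analysis is precisely that induction, with all the constraints in \eqref{eq:3.10.4} correctly verified at each step. The only (harmless) imprecision is the claim that one inserts $|f'_y|^2/|f'_y|^2$ in \emph{each} family: in the family where $D_{y_k}$ hits $|f'_y|^{-(4l+2)}$ the denominator already becomes $|f'_y|^{4(l+1)}$ and no extra factor is needed, as your own count of $|\gamma'|=2(l+1)$ in that case implicitly confirms.
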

%
% I have reformulated the previous lemma. /JT
%
%
%

\par

\begin{lemma}
\label{lemma:3.5}
Let $\varphi, a, p, r_j, \rho _j$ be as in Theorem \ref{thm:0.1},
$\chi \in \Xi^\Delta(\varepsilon_1)$, and let
\[
h(x,\xi) = (2\pi)^{-d}\iint 
               e^{i (\varphi(y,\xi) - \varphi(x,\xi)
                        - \scal{y-x}{\eta} )}
               (1-\chi(x,y)) a(y,\xi) p(x,\eta)\, d y d \eta .
\]
Then $h\in \cS(\rr {2d})$.
\end{lemma}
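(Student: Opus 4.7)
The key geometric observation is that on the support of $1-\chi(x,y)$ one has $|y-x|\ge \varepsilon_1\langle x\rangle/2$, by Definition \ref{def:1.2.2}; in particular, the phase $\Psi(x,y,\xi,\eta)=\varphi(y,\xi)-\varphi(x,\xi)-\scal{y-x}{\eta}$ satisfies $|\Psi'_\eta|=|y-x|\gtrsim\langle x\rangle$. My plan is to exploit this non-stationarity in $\eta$, together with the $\xi$-non-stationarity implicit in \eqref{eq:2.0}, through iterated integration by parts using the operators $R_2$ and $R_3$ from Lemmas \ref{lemma:3.4} and \ref{lemma:3.4.1}. Since differentiating under the integral sign produces, by Leibniz' rule, a finite sum of integrals of the same structure, with amplitudes still in appropriate generalized $\SG$ classes, it will suffice to prove $|h(x,\xi)|\lesssim\langle x\rangle^{-K}\langle\xi\rangle^{-K}$ for every $K\ge 0$.

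To extract decay in $\xi$, I would split the $\eta$-integration via a smooth partition of unity $1=\kappa_1(\xi,\eta)+\kappa_2(\xi,\eta)$, with $\kappa_1$ supported in $\{|\eta|\le c\langle\xi\rangle/2\}$ and $\kappa_2$ in $\{|\eta|\ge c\langle\xi\rangle/3\}$, where $c>0$ is chosen so that $|\varphi'_y(y,\xi)|\ge c\langle\xi\rangle$ for all $y,\xi$, as is possible by \eqref{eq:2.0}. Writing $h=h_1+h_2$ accordingly, I would handle $h_2$ first, where $\langle\eta\rangle\gtrsim\langle\xi\rangle$, by applying $({^tR_2})^{2N}$ for arbitrary $N$. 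By Lemma \ref{lemma:3.4} this produces a factor $|y-x|^{-2N}\lesssim\langle x\rangle^{-2N}$ together with $D_\eta^\theta(\kappa_2 p)$, $|\theta|=2N$, which is bounded by $\omega_2(x,\eta)\langle\eta\rangle^{-2N}$. Using $\langle y\rangle\lesssim\langle x\rangle+|y-x|$ to absorb the polynomial growth of $\omega_1(y,\xi)$ against $|y-x|^{-2N}$ on $\operatorname{supp}(1-\chi)$, the $y$- and $\eta$-integrations would then yield a bound of the form $\langle x\rangle^{-2N+c_1}\langle\xi\rangle^{-2N+c_2}$, with $c_1,c_2$ independent of $N$, giving arbitrary joint decay in $(x,\xi)$.

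For $h_1$, where $|\varphi'_y(y,\xi)-\eta|\ge c\langle\xi\rangle/2$, I would first apply $({^tR_3})^{l}$ with $f(y)=\varphi(y,\xi)-\scal{y-x}{\eta}$ to extract $\langle\xi\rangle$-decay: inspecting \eqref{eq:3.10.2}--\eqref{eq:3.10.4} with $\varphi\in\SG^{1,1}_{1,1}$ and $|f'_y|\gtrsim\langle\xi\rangle$, one sees that each application produces a net factor comparable to $\langle\xi\rangle^{-l}$, acting on $y$-derivatives of $(1-\chi(x,y))a(y,\xi)\kappa_1(\xi,\eta)$ which stay in suitable $\SG$ classes (since $D^\beta_y(1-\chi)=O(\langle y\rangle^{-|\beta|})$ and derivatives of $a$ remain in $\SG^{(\omega_1)}_{r_1,\rho_1}$). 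I would then follow with sufficiently many applications of $({^tR_2})^{2N}$ to gain $\langle x\rangle$-decay and make the $\eta$-integration converge absolutely, exactly as for $h_2$. The main technical obstacle will be the bookkeeping at this final stage: the derivatives produced by the two integration-by-parts operators have to be tracked carefully so that the polynomial growth of $\omega_1$ and $\omega_2$ in all variables is dominated by the $\langle x\rangle^{-2N}\langle\xi\rangle^{-l}$ decay for $N,l$ large, and so that the $\xi$-dependence of $\kappa_1$ (through the cutoff on $|\eta|/\langle\xi\rangle$) does not spoil the decay in $\xi$.
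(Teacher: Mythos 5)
Your proposal is essentially correct, but it organizes the integrations by parts differently from the paper. The paper does not split the $\eta$-integration at all: it first applies $({^t}R_2)^{l_2}$ in $\eta$ (exactly as you do), gaining $|x-y|^{-l_2}$, hence $(\norm{x}\norm{y})^{-l_2/2}$ by \eqref{eq:3.10AA}, together with $\norm{\eta}^{-l_2}$ from $D_\eta^\theta p$; it then applies in $y$ the second-order operator $\widetilde R_1=(1-\Delta_y)/(\norm{\varphi'_y(y,\xi)}^2-i\Delta_y\varphi(y,\xi))$, which reproduces $e^{i\varphi(y,\xi)}$ alone. Since $\widetilde R_1$ ignores the factor $e^{-i\scal{y}{\eta}}$, each of the $l_1$ applications gains $\norm{\xi}^{-2}$ but may produce powers of $\eta$ up to $\eta^{2l_1}$; these are absorbed by choosing $l_1$ first and then $l_2\gg l_1$. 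Your route instead makes the \emph{full} $y$-phase non-stationary by cutting the $\eta$-space into $|\eta|\lesssim\norm{\xi}$ (where $|\varphi'_y(y,\xi)-\eta|\gtrsim\norm{\xi}$ and the first-order operator $R_3$ of Lemma \ref{lemma:3.4.1} applies) and $|\eta|\gtrsim\norm{\xi}$ (where the $\norm{\eta}^{-2N}$ decay from $R_2$ already converts into $\norm{\xi}$-decay). What the paper's version buys is that the coefficients of the $y$-integration by parts are independent of $\eta$, so the subsequent bookkeeping against $R_2$ is cleaner and no partition of unity in $(\xi,\eta)$ is needed; what your version buys is that no powers of $\eta$ are ever generated, so the two parameters $l$ and $N$ can be chosen independently. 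Both are standard and both close the argument. One point you should tighten: \eqref{eq:2.0} gives $\norm{\varphi'_x(y,\xi)}\asymp\norm{\xi}$, i.e.\ a two-sided bound on $1+|\varphi'_y|^2$, not the pointwise lower bound $|\varphi'_y(y,\xi)|\ge c\norm{\xi}$ you invoke (for $\varphi=\scal x\xi$ one has $\varphi'_x=\xi$, which vanishes at $\xi=0$). So the non-stationarity $|f'_y|\gtrsim\norm{\xi}$ on $\supp\kappa_1$ is only available for $|\xi|$ bounded away from $0$; on the complementary compact region in $\xi$ no $\xi$-decay is needed and the $R_2$ step alone gives a uniform bound, so this is a harmless but necessary extra case.
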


\par

For the proof of Lemma \ref{lemma:3.5} we recall that for every $\ep >0$ it exists
an $\ep _0>0$ such that
\begin{equation}
\label{eq:3.10}
|y - x| \ge \ep _0  \norm{y} \quad \text{when}\quad
|y - x| \ge \varepsilon  \norm{x}.
\end{equation}
Hence,
\begin{equation}
\label{eq:3.10AA}
(\norm{x} \norm{y})^{\frac{1}{2}} \le \norm{x} + \norm{y} \lesssim |y - x|\quad
\text{when}\quad |y - x| \ge \varepsilon  \norm{x}.
\end{equation}

\par

\begin{proof}
We make use of the operators
$$
\widetilde{R}_1 = \frac{1 - \Delta_{y}}{\norm{\varphi^\prime_y(y,\xi)}^2 - i
\Delta_{y}\varphi(y,\xi)},
$$
which has properties similar to those of 
the operator $R_1$ defined in \eqref{eq:2.2},
and $R_2$, defined in \eqref{eq:defr2}. For any
couple of positive integers $l_1,l_2$ we have
\begin{eqnarray}
\nonumber
h(x,\xi) & \!\!\!\!\! = \!\!\!\!\!\! &
                   (2\pi)^{-d}\iint  \!\!
                   e^{i (\varphi(y,\xi) - \varphi(x,\xi)
                           - \scal{y-x}{\eta} )}
                   (1-\chi(x,y)) a(y,\xi)
                   \!\left[ ( {^t} R_2 )^{l_2} p \right] \!\!(x,\eta)\, d y d \eta
\\
\label{eq:3.12}
             & \!\!\!\!\! = \!\!\!\!\!\! &
                   (2\pi)^{-d}\iint \!\!
                   e^{i (\varphi(y,\xi) - \varphi(x,\xi)
                           + \scal{x}{\eta})}
                   ( {^t} \widetilde{R}_1)^{l_1} \!\left[
                                   e^{- i \scal{ y }{ \eta } }          
                                   q(x,y,\xi,\eta)
                             \right]
                              d y d \eta
\end{eqnarray}
when
\[
q(x,y,\xi,\eta) = (1-\chi(x,y)) a(y,\xi)\!
                  \left[ ( {^t} R_2 )^{l_2} p \right] \!\!(x, \eta).
\]
By Lemma \ref{lemma:3.4}, we 
get
\begin{align*}
\partial^\alpha_{y} q(x,y, \xi&,\eta) =
\\
 = & \partial^\alpha_{y} \left[
                                 (1 - \chi(x,y)) a(y,\xi)
                                 \sum_{|\theta|=l_2}
                                 c_{\theta}
                                 \frac{(x-y)^\theta}{|x-y|^{2l_2}}
                                 (D^\theta_{\eta}p)(x, \eta)
                          \right]
\\
= &\sum_{|\theta|=l_2} (D^\theta_{\eta}p)(x, \eta)
      \sum_{\alpha_{1}+\alpha_{2}+\alpha_{3} = \alpha}
            \frac{\alpha!}{\alpha_{1}!\alpha_{2}!\alpha_{3}!}
            (\delta_{|\alpha_{1}|,0} - (\partial_y^{\alpha_{1}} \chi)(x,y))\cdot
\\
&       \cdot  (\partial_y^{\alpha_{2}}a)(y,\xi)
            \sum_{\beta_{1}+\beta_{2} = \alpha_{3}}
            \frac{\alpha_{3}!}{\beta_{1}!\beta_{2}!} c_{\theta\beta_{1}}
            (x-y)^{\theta-\beta_{1}} 
            \frac{P_{\beta_{2}}(x-y)}{|x-y|^{2(r_2+|\beta_{2}|)}},    
\end{align*}
with $P_{\beta_{2}}$ homogeneous polynomial of degree $|\beta_{2}|$,
while $\delta_{|\alpha_{1}|,0}=1$
for $\alpha_1=0$, $\delta_{|\alpha_{1}|,0}=0$ otherwise. Then we obtain
\begin{multline*}
|\partial^\alpha_{y} q(x,y,\xi,\eta)|
\\
\lesssim
      \sum_{|\theta|=l_2} \omega _2(x,\eta) \vartheta_{0,-|\theta|}(x,\eta)
      \sum_{\alpha_{1}+\alpha_{2}+\alpha_{3} = \alpha}
            \norm{y}^{-|\alpha_{1}|}\omega _1(y,\xi)
            \vartheta_ {-r_1|\alpha _{2}|,0}
\\
	\sum_{\beta_{1}+\beta_{2} = \alpha_{3}}
            |x-y|^{|\theta|-|\beta_{1}|+|\beta_{2}|-2l_2-2|\beta_{2}|}
\\
   \lesssim
       \; \omega _1(x,\eta) \omega _2(y,\xi) \cdot \vartheta_{0,- l_2}(x,\eta)\cdot
\\
      \sum_{\alpha_{1}+\alpha_{2}+\alpha_{3} = \alpha}
            \vartheta_{-\min\{r_1,1\}(|\alpha _{1}|+|\alpha _{2}|),0}(y,\xi)
            |x-y|^{-l_2-|\alpha _{3}|}.
\end{multline*}
In view of the fact that $|y-x| \ge \frac{\varepsilon_1}{2} \norm{x}$ on $\supp(q)$, from
\eqref{eq:3.10} and \eqref{eq:3.10AA} we also obtain
\begin{eqnarray*}
|y-x| \ge \frac{\varepsilon_1}{2} \norm{x} 
\Rightarrow
     |y-x| \gtrsim \norm{y}
\Rightarrow
     |y-x| \gtrsim \norm{x} + \norm{y} \ge (\norm{x} \norm{y})^{\frac{1}{2}},
\end{eqnarray*}
and we can conclude
\begin{multline}
\label{eq:3.13}
|\partial^\alpha _{y} q(x,y,\xi,\eta)|
\\[1ex]
         \lesssim
         \omega _1(x,\eta) \omega _2(y,\xi)\cdot 
         \vartheta _{-l_2/2,-l_2/2}(x,y)\cdot \eabs \eta ^{-l_2}
         \eabs y^{-\min\{r_1,1/2\}|\alpha |}.
\end{multline}

\par

Finally, since admissible weight functions are
polynomially moderate, it follows by choosing $l_2$ large enough
that that the order of $q$ can be made arbitrary low with respect to
$x,y,\eta$. Moreover, when
derivatives with respect to $y$ are involved, $q$ behaves
as an $\SG$ symbol.

\par

We now estimate the integrand of \eqref{eq:3.12}. As shown
in Lemma \ref{lemma:2.2}, we have
\begin{eqnarray*}
\nonumber
& &\hspace*{-4mm}
( {^t} \widetilde{R}_1)^{l_1} \left[ e^{- i \scal{y}{\eta}}          
                 q(x,y,\xi,\eta) \right] =
\\
& &\hspace*{-4mm}
\label{eq:3.14}
= e^{- i \scal{y}{\eta }} \frac{q(x,y,\xi,\eta)}{(\norm{\varphi^\prime_\xi(y,\xi)}^2 - 
                            i \Delta_{\xi}\varphi(y,\xi))^{l_1}} +
     Q(\Div, \Delta_{y}) \left[ e^{- i \scal{y}{\eta}}          
                 q(x,y,\xi,\eta) \right]\!,
\end{eqnarray*}
as in \eqref{eq:2.2.1}. Due to the presence of the exponential in the 
argument of $Q(\Div, \Delta_{y})$, in the second term there are
powers of $\eta$ of height not greater than $2l_1$. Owing to 
\eqref{eq:3.13} we finally find
\begin{eqnarray*}
              |x^\alpha \xi^\beta h (x,\xi)| 
& \lesssim &
              \norm{x}^{-\frac{l_2}{2}+|\alpha|}
              \norm{\xi}^{-2l_1+|\beta|}
\\
&       &
              \int \omega(y,\xi)\,\norm{y}^{-\frac{l_2}{2}}dy
              \int \omega_0(x,\eta)\,\norm{\eta}^{-\frac{l_2}{2}+2l_1} d\eta  \lesssim
              1,
\end{eqnarray*}
for all multi-indices $\alpha,\beta$, provided that $l_1$ and $l_2$ are large enough, 
since $\omega _1$ and $\omega _2$ are polynomially bounded. Here $l_1$
is chosen first, and thereafter $l_2$ is fixed accordingly.
Differentiating $h_2$ and multiplying it by powers of $x$ and $\xi$
would give a linear combination of expressions similar to \eqref{eq:3.12}, with
different $\omega _1$, $\omega _2$ and parameters for the involved symbols, which
are then similarly estimated by constants. The proof is complete.
\end{proof}

\par

\begin{proof}[Proof of Theorem \ref{thm:0.1}]
Let
\[
c(x,\xi) = (2\pi)^{-d}\iint 
           e^{i (\varphi (y,\xi ) - \varphi (x,\xi) - \scal{y-x}{\eta})}
           a(y,\xi) p(x,\eta)\,dy d\eta
\]
By explicitly writing $\op(p)\circ\op_{\varphi}(a) u(x)$ with $u\in\cS$, we obtain
\begin{multline*}
\op(p)\circ\op_{\varphi}(a) u(x) =
\\
=
    (2\pi)^{-3d/2}\int  e^{i \scal{x}{\xi}} p(x, \xi)
    \int  e^{-i \scal{y}{\xi}}
    \int  e^{i \varphi(y,\eta)} a(y,\eta) \widehat u(\eta)\, d \eta d yd \xi
\\
=
(2\pi)^{-d/2}
\int  e^{i \varphi(x,\eta)}
     c(x,\eta )
     \widehat u(\eta)\, d \eta
\\
=
(2\pi)^{-d/2}
\int e^{i \varphi(x,\xi)}
     c(x, \xi)
     \widehat u(\xi)\, d \xi.
\end{multline*}
We have to show that $c\in \SG^{(\omega _0)}_{r_0,\rho _0}$.
Choosing $\chi \in \Xi^\Delta(\varepsilon_1)$, with $\varepsilon_1\in(0,1)$
fixed below (after equation \eqref{eq:3.37}), we write
$c=c_0+h$, where
\begin{align*}
c_0(x,\xi )& = 
    (2\pi)^{-d}\iint 
           e^{i (\varphi(y,\xi) - \varphi(x,\xi) - \scal{y-x}{\eta})}
           \chi(x,y) a(y,\xi) p(x,\eta)\,dy d\eta
\intertext{and}
h(x,\xi )& = 
   (2\pi)^{-d} \iint 
           e^{i (\varphi(y,\xi) - \varphi(x,\xi) - \scal{y-x}{\eta})}
           ( 1 - \chi(x,y) ) a(y,\xi) p(x,\eta)\,dy d \eta .
\end{align*}
By Lemma \ref{lemma:3.5} we get $h \in \cS$. We shall prove that
$c_0 \in \SG^{(\omega _0)}_{r_0,\rho _0}$, and admits the asymptotic 
expansion in Lemma \ref{lemma:3.3}. 

\par

In fact, let
$\eta = \varphi '_{x}(x,\xi) + \theta$. Then
\begin{eqnarray*}
p(x,\eta) & = &
          \sum_{|\alpha| < M}
            \frac{i^{|\alpha |}\theta ^\alpha}{\alpha!}
            (D^\alpha _{\xi} p)(x, \varphi '_{x}(x,\xi))
          +
          \sum_{|\alpha| = M}
            \frac{ i^{|\alpha |} \theta^\alpha}{\alpha!} 
             r_{\alpha} (x,\xi,\theta)
\\
r_{\alpha} (x,\xi,\theta) & = & 
          M\int_{0}^{1}  (1-t)^{M-1}
            (D^\alpha _{\xi} p)(x, \varphi '_{x}(x,\xi) + t\theta)\, dt,
\end{eqnarray*}
by Taylor's formula. Also let
\begin{multline*}
H_\alpha (x,\xi ,\theta ) =  \theta ^\alpha \cF  \left (
                       e^{i \psi(x,\cdo ,\xi)}
                     \chi(x,\cdo ) a(\cdo ,\xi) 
                   \right )(\theta )
                   \\[1ex]
                   = \cF  \left (
                     D^\alpha ( e^{i \psi(x,\cdo ,\xi)}
                     \chi(x,\cdo ) a(\cdo ,\xi) )
                   \right )(\theta ) .
\end{multline*}
Then
$$
c_0(x,\xi ) = c_{0,1}(x,x,\xi )+c_{0,2}(x,x,\xi ),
$$
where
\begin{align*}
             c_{0,1}(x,y,\xi )
             &= \sum_{|\alpha| < M}
             \frac{(i^{|\alpha |}D^\alpha _{\xi} p)(x,
                   \varphi '_x(x,\xi))}{\alpha!}
             (\cF ^{-1}H_\alpha (x,\xi ,\cdo ))(y)
             \\
             c_{0,2}(x,y,\xi )
             &= \sum_{|\alpha| =M}
             \frac{i^{|\alpha |}}{\alpha!}
             (\cF^{-1}(r_\alpha (x,\xi ,\cdo )H_\alpha (x,\xi ,\cdo ))(y)
\end{align*}

\par

Now, since every derivative of $\chi$ vanishes in a neighbourhood of 
the diagonal of $\rr d \times \rr d$, and 
$\chi(x,x) = 1$, we get
\begin{align*}
c_{0,1}(x,x,\xi ) &= \sum_{|\alpha| < M} \frac{c_{\alpha}(x,\xi )}{\alpha !} % +
\\
c_{0,2}(x,x,\xi ) &= \sum_{|\alpha| = M} \frac{c_{0,\alpha}(x,\xi )}{\alpha!},
\end{align*}
where $c_{\alpha}$ is the same as in \eqref{eq:3.8}, and 
\begin{eqnarray*}
c_{0,\alpha}(x,\xi) = (2\pi)^{-d/2}\int e^{i \scal{x}{\theta}}
                 r_{\alpha} (x,\xi,\theta)
                 H_\alpha (x,\xi ,\theta )
                  d \theta .
\end{eqnarray*}

\par

By the properties of the generalized $\SG$ asymptotic expansions,
we only have to estimate $c_{0,\alpha}$, $|\alpha |=M$ to complete the proof (cf. \cite{CoTo}).

\par

Let
$\chi _{0,\xi} = \chi _0(\norm \xi ^{-1}\cdo )$, where $\chi _0 \in C^\infty_{0}(\rr d)$
is identically equal to $1$ in the ball $B_{\varepsilon_2/{2}}(0)$ and supported in the ball
$B_{\varepsilon_2}(0)$, where $\varepsilon_2\in(0,1)$ will be fixed later
(after equation\eqref{eq:3.20}). Then,
$$
\supp {\chi _{0,\xi } }\subset B_{\varepsilon_2\norm{\xi}}(0).
$$

\par

Next we split $c_{0,\alpha}$ into the sum of the two integrals
\begin{align*}
c_{1,\alpha}(x,\xi )  & =  \!\!
        (2\pi)^{-d/2}\int e^{i \scal{x}{\theta}}
                 r_{\alpha} (x,\xi,\theta)
                 \chi _{0,\xi }(\theta )
                H_\alpha (x,\xi ,\theta )
                d \theta ;
\\
c_{2,\alpha}(x,\xi )  & =  \!\!
        (2\pi)^{-d/2}\int e^{i \scal{x}{\theta}}
                 r_{\alpha} (x,\xi,\theta)
                 \big ( 1-\chi _{0,\xi }(\theta )\big )
                H_\alpha (x,\xi ,\theta )
                d \theta ;
\end{align*}
We claim that for some integer $N_0\ge 0$, depending on $\omega _2$ only,
it holds
\begin{align}
|c_{1,\alpha}(x,\xi )|  & \lesssim \omega _1(x,\xi )(\Theta _{2,\fy}\omega _2)(x,\xi )
\norm x^{-\min (r_1,1/2)|\alpha |}\norm \xi ^{N_0-|\alpha |/2},\label{c1alphaEst}
\intertext{and that for every integers $N_1$ and $N_2$ it holds}
|c_{2,\alpha}(x,\xi )|  & \lesssim \norm x^{-N_1}\norm \xi ^{-N_2}.\label{c2alphaEst}
\end{align}

\par

In order to prove \eqref{c1alphaEst} we set
      \[
      f_{\alpha} (x,\xi,y) = \cF^{-1}
                             \left (
                                 r_\alpha (x,\xi,\cdo )
                                 \chi _{0,\xi}
                                 \right )(y)
      \]
      and use Parseval's formula to rewrite $c_{1,\alpha}$ into
      \begin{equation}\label{c1alphaReform}
                   c_{1,\alpha }(x,\xi ) = (2\pi)^{-d/2} \int  f_{\alpha} (x,\xi ,x-y)
                   D^{\alpha}_y
                   \left(
                     e^{i \psi(x,y,\xi)}
                     \chi(x,y)
                     a(y,\xi)
                   \right )\, dy.
       \end{equation}
%      %
By our choice of $\chi _0$ and $\varphi \in \Ph$ it follows that
for any multiindex $\beta$, on the support of the integrand of $c_{1,\alpha}$,
      \begin{multline}
      	\label{eq:3.20}
       | D^\beta_{\theta} r_\alpha (x,\xi,\theta)|
      \\
 \lesssim      
          \int_{0}^1  \omega _2(x,\varphi '_{x}(x,\xi) + t \theta)
          \norm{\varphi '_{x}(x,\xi) + t \theta}^{-(|\alpha|+|\beta|)} \cdot
          (1-t)^{M-1} \; t^{|\beta|}\, dt
      \\
 \lesssim
      	(\Theta _{2,\fy}\omega_2)(x,\xi)\,
          \norm{\xi}^{N_0-(|\alpha|+|\beta|)},
      \end{multline}
      for a suitable $N_0\in \mathbf Z_+$. In fact, $\omega$ is polynomially moderate, while
      the presence of ${\chi _0}$ in the integrand of $c_{1,\alpha}$ 
      and $t \in [0,1]$ imply
      $$
      |\theta|   \le \varepsilon_2 \norm{\xi},\quad
      |t \theta | \le \varepsilon_2 \norm{\xi}\quad \text{and}\quad 
      \norm{\varphi '_x (x,\xi)+t\theta} \asymp\norm{\xi}.
      $$
      
      \par
      
      We have also, for any multi-indices $\alpha,\beta$,
      \begin{multline}
      \label{eq:3.21}
          \left | 
            y ^\beta f_{\alpha} (x,\xi ,y)
          \right | =
          \left |
            \cF^{-1}
              \left (
                D^ \beta _{\theta}
                \left(
                  r_\alpha (x,\xi,\cdo )
                  \chi _{0,\xi} 
                \right )
              \right )
          \right |
      \\
       \lesssim
          |B_{\varepsilon_2\norm{\xi}}(0)| \cdot 
          \sup_{\theta \in B_{\varepsilon_2\norm{\xi}}(0)}
          \left|
                D^\beta_{\theta}
                \left(
                  r_\alpha (x,\xi,\theta)
                  \chi _{0,\xi} \left(
                                \frac{\theta}{\norm{\xi}}
                             \right)
                \right)
          \right| ,
      \end{multline}
      where $|B_{\varepsilon_2\norm{\xi}}(0)|$ is the volume of
      $B_{\varepsilon_2\norm{\xi}}(0)$. In view of \eqref{eq:3.20},
      \begin{multline}
      \label{eq:3.22}
%      \begin{aligned}
         \left | D^\beta _{\theta}
                \left(
                  r_\alpha (x,\xi,\theta)
                  \chi _{0,\xi}(\theta )
                \right) \right | \lesssim
          \sum_{\gamma \le \beta}
            \left |
                D^\gamma_{\theta} r_\alpha (x,\xi,\theta)
            \right |
            \left |
                D^{\beta-\gamma}_{\theta}
                \chi _{0,\xi}
                \right |
      \\
      \lesssim
          \sum_{\gamma \le \beta}
            (\Theta _{2,\fy} \omega _2) (x,\xi) \norm{\xi}^{N_0-(|\alpha|+|\gamma|)}
            \norm{\xi}^{(|\gamma|-|\beta|)}
      \\
      \lesssim
          (\Theta _{2,\fy} \omega _2)(x,\xi) \norm{\xi}^{N_0-(|\alpha|+|\beta|)},
%          \end{aligned}
      \end{multline}
      %
      %while, of course, $\mathrm{vol} .
      %
      Since $|B_{\varepsilon_2\norm{\xi}}(0)| \lesssim
      \norm{\xi}^{d}$, uniformly
      with respect to $\varepsilon_2\in(0,1)$, 
      \eqref{eq:3.20}, \eqref{eq:3.21}, and \eqref{eq:3.22} imply, for any
      multi-indices $\alpha,\beta$ and integer $N$,
      \begin{align*}
       \left| 
            y^\beta f_{\alpha} (x,\xi ,y)
          \right|
      &
          \lesssim
          (\Theta _{2,\fy} \omega _2)(x,\xi )\,\norm{\xi}^{d+N_0-|\alpha|-|\beta|}
%      \\
%      \!\!
%      & 
%      \!\!
%         \left| 
%            \left( u \norm{\xi} \right)^\beta
%          \right|
%          \left|  
%            f_{\alpha} (x,\xi,u)
%          \right|
%          \lesssim
%          \omega_{2,\varphi}(x,\xi)\,\norm{\xi}^{d+N-|\alpha|}
%          \Rightarrow
      \intertext{giving that}
%      \!\!
%      & 
%      \!\!
          \left | |y|^{N} \norm{\xi}^{N}  
            f_{\alpha} (x,\xi,y)
          \right |
          & \lesssim
         (\Theta _{2,\fy} \omega _2)(x,\xi)\,\norm{\xi}^{d+N_0-|\alpha |}.
          \end{align*}
      This in turn gives
      \[
          \left|  
            f_{\alpha} (x,\xi,y)
          \right|
          \lesssim
          \omega_{2,\varphi}(x,\xi)\,
          \norm{\xi}^{d+N_0-|\alpha|}\left( 1 + |y| \norm{\xi} \right)^{-N}.
      \]
      for any multi-index $\alpha$ and integer $N$.
      
      \par
      
      By letting $N = N_{1} + d+1$ with $N_{1}$ arbitrary integer,
      the previous estimates and \eqref{c1alphaReform} give
      \begin{multline*}
      |c_{1,\alpha }(x,\xi )|
      \\
      \lesssim  (\Theta _{2,\fy}\omega_2)(x,\xi)K_\alpha (x,\xi )\norm{\xi}^{d+N_0-|\alpha|}
         \cdot
         \int (1 + |y-x| \norm{\xi})^{-(d+1)}\,dy 
      \end{multline*}
where
      \begin{equation}\label{KalphaDef}
      K_\alpha (x,\xi ) :=  \sup_{y} \left (
                    \left|
                    (\cF _3^{-1}H_\alpha )(x,\xi ,y)
                    \right|
                    (1 + |y-x| \norm{\xi})^{-N_{1}}
                 \right ) .
      \end{equation}
      That is,
      \begin{equation}\label{eq:3.27}
      |c_{1,\alpha }(x,\xi )|
      \lesssim  (\Theta _{2,\fy}\omega_2)(x,\xi)K_\alpha (x,\xi )
      \norm{\xi}^{d+N_0-d-|\alpha|} .
      \end{equation}

      \par
      
       In order to estimate $K_\alpha (x,\xi )$, we
       notice that
      \begin{multline*}
      D_y^\alpha \left (
                                      e^{i \psi(x,y,\xi)}
                                      \chi(x,y)
                                      a(y,\xi)
                                   \right ) = 
      \\
      =   
      \sum_{\beta + \gamma + \delta = \alpha} 
           \frac{\alpha !}{\beta !\gamma !\delta !}
           \tau _{\beta}(x,y,\xi) e^{i \psi(x,y,\xi)}
           \;
           D_y^{\gamma} \chi(x,y)
           \;
           D_y^{\delta} a(y,\xi),
      \end{multline*}
      where $\tau _\beta$ are the same as in Lemma \ref{lemma:3.1}. Furthermore,
      by the support properties of $\chi$, Lemma \ref{lemma:3.2}
       shows that
       $$
       \norm x \asymp \norm y,\quad \text{and}\quad
       \omega _1(y,\xi )\lesssim \omega _1(x,\xi )\norm {y-x}^{M_1}
       $$
       in the support of $(\cF ^{-1}_3H_\alpha )(x,\xi ,y)$, for some constant
       $M_1\ge 0$. Hence, if $s=\min (r_1,1/2)$, we get
      \begin{multline*}
      |D_y^{\alpha} \left (
                                      e^{i \psi(x,y,\xi)}
                                      \chi(x,y)
                                      a(y,\xi)
                                   \right )|  
      \\
      \le 
      \sum_{\beta +\gamma +\delta =\alpha} 
           \frac{\alpha !}{\beta !\gamma !\delta !}
           |\tau _{\beta}(x,y,\xi) 
           \;
           D_y^{\gamma} \chi(x,y)
           \;
           D_y^{\delta} a(y,\xi)    |
      \\
         \lesssim   
      \sum _{\beta +\gamma +\delta =\alpha} 
           |\tau_{\beta}(x,y,\xi)|
           \;
           \norm{y}^{-|\gamma|}
           \;
           \omega _1(y,\xi)\,\norm{y}^{-r_1|\delta |}
      \\
       \lesssim 
      \omega _1(x,\xi)\norm {y-x}^{M_1}
      \sum_{\beta +\gamma +\delta =\alpha} \sum _j
           |y-x|^{|\delta _{j}|}
            \norm{x} ^{N_j-|\beta|}
           \norm{\xi} ^{N_j+|\theta_j|}
           \norm{y}^{- r_1|\gamma +\delta |}
      \\
       \lesssim 
      \omega _1(x,\xi)\norm {y-x}^{M_1}
      \sum _{\beta +\gamma +\delta =\alpha}\sum_j
           |y-x|^{|\delta _{j}|}
            \norm{x}^{-|\beta | /2}
           \norm{\xi}^{N_j+|\delta _j|}
           \norm{x}^{-r_1|\gamma +\delta |}
     \\
     \lesssim
      \omega_1(x,\xi)\norm {y-x}^{M_2}
      \sum _{\beta +\gamma +\delta =\alpha}\sum_j
           (|y-x|\norm{\xi})^{|\delta _{j}|}
           \norm{\xi}^{N_j}
           \norm{x}^{-s|\beta + \gamma + \delta |}
      \\
       \lesssim \omega _1(x,\xi )
           \norm{x}^{-s|\alpha |}
      \big \langle |x-y|\norm{\xi} \big \rangle ^{M_3}
      \sum_{\beta +\gamma +\delta =\alpha}\sum _j
           \norm {\xi}^{N_j},
      \end{multline*}
      %
%      where we used Lemma \ref{lemma:3.2}, $\norm{x}  \asymp \norm{y}$
%      (owing to the presence of $\chi$), the fact that $\omega_0$ is polynomially moderate,
%      and suitable polynomials $\tilde{v},\tilde{\tilde{v}}$.
      for some constants $M_2$ and $M_3$. Note that
      all terms in the last sum, are
      never larger than
      $
      \norm{\xi}^{|\beta | /2} \lesssim \norm{\xi}^{|\alpha | /2}
      $
      in view of \eqref{eq:3.4}. Moreover, \eqref{eq:3.3} implies
      \begin{multline*}
      	N_j \le N_j+\frac{1}{2}|\delta _j|=\frac{1}{2}(2N_j+|\delta _j|)
	\\
	 \le \frac{1}{2}\left(|\delta _j|+\sum_{k=1}^{N_j}|\beta _{jk}|\right)
	 =\frac{|\beta|}{2}\le\frac{|\alpha|}{2}.
      \end{multline*}

      \par
      
      We conclude that, for $N_1$ large enough, we get
      \begin{multline*}
      |c_{1,\alpha}(x,\xi )|
      \\[1ex]
      \lesssim
      \omega _1(x,\xi) (\Theta _{2,\fy }\omega_2)(x,\xi)
           \norm{x}^{-s|\alpha |}
           \norm{\xi}^{N_0-|\alpha | /2}
%           \times
%        \\
%       \times
           \sup_{y\in\rr{d}}\big \langle |x-y|\norm{\xi} \big \rangle ^{M_3-N_1}
      \\
      \le
           \omega _1(x,\xi) (\Theta _{2,\fy }\omega_2)(x,\xi)
           \norm{x}^{-r_1|\alpha |/2}
           \norm{\xi}^{N_0-|\alpha | /2},
      \end{multline*}
      and \eqref{c1alphaEst} follows.

\par
      
Next we show that \eqref{c2alphaEst} holds. Let
      \begin{eqnarray}
      \nonumber
      \!\!\!\!\!\!\!\!\!\!
      \!\!
      f(x,y,\xi,\theta)
      \!\!\!
      & = &
      \!\!\!
      \scal{y}{\theta} - \psi(x,y,\xi)
      \\
      \label{eq:3.29}
      \!\!\!\!\!\!\!\!\!\!
      \!\!
      \!\!\!
      & = &
      \!\!\!
      \scal{y}{\theta} - (  \varphi(y,\xi) - \varphi(x,\xi) 
                        - \scal{y-x}{\varphi '_{x}(x,\xi)} ),
      \end{eqnarray}
      which implies
      \begin{equation*}
%      \nonumber
      f^\prime _y(x,y,\xi,\theta) 
       = 
      \theta - ( \varphi '_{y}(y,\xi) - \varphi '_{x}(x,\xi) ),
      \end{equation*}
      giving that
      \begin{equation*}
      %      \\
      \norm {f^\prime _y(x,y,\xi,\theta)}  \lesssim 
      \norm{\theta} + \norm{\xi}.
      \end{equation*}

      \par
      
      Let
      $$
      R_4= 
      \frac{1-\Delta_{\theta}}{\norm{x}^{2}}
      $$
      Then ${{^t} R_4}=R_4$ and
      $R_4 e^{i \scal{x}{\theta}} = e^{i \scal{x}{\theta}}$. By induction
      we get
      \begin{multline}\label{eq:3.31}
      c_{2,\alpha}(x,\xi ) =(2\pi)^{-d/2}
        \int  e^{i \scal{x}{\theta}}
               R_4^{l}
               \big (
                 r_\alpha (x,\xi,\cdo  )
                (
                    1 -
                    \chi _{0,\xi}
                ) \cdot
                H_\alpha (x,\xi ,\cdo )
                   \big )(\theta )
               d \theta
      \\
%      \nonumber
         =
         \sum _{j}
              \int e^{i \scal{x}{\theta}}
                 r_{j,\alpha} (x,\xi,\theta)
                    \chi _{j,\xi} (\theta )
                 D_\theta ^{\beta _j}H(x,\xi ,\theta )\, d\theta ,
      \end{multline}
      for every integer $l\ge 0$, where
      $\chi _{j,\xi }\equiv \chi _j(\cdo /\norm \xi )$, and $\chi _j$ and $r_{j,\alpha}$
      are smooth functions which satisfy
      \begin{equation}\label{eq:3.32}
      \chi _j\in L^\infty \cap C^\infty ,\quad \supp \chi _j
      \subseteq \rr d\setminus B_{\ep _2}(0)
      \end{equation}
      and
      \begin{equation}\label{eq:3.33}
      |r_{j,\alpha} (x,\xi,\theta)| \lesssim \omega_{2,\varphi}(x,\xi)\norm{\theta}^N
      \vartheta_{-2l_1,-|\alpha|}(x,\xi).
      \end{equation}
      Here $|\beta _j|\le 2l$ and the induction is done over $l\ge 0$.
      
      \par

We need to estimate the integrals in the sum \eqref{eq:3.31}. It is then convenient
to set
\begin{align}
g^j_{\beta ,\gamma ,\delta}(x,y,\xi) &\equiv \tau_{\beta}(x,y,\xi) \;
                                         \partial^{\gamma}_y \chi(x,y) \;
                                         y^{\beta_{j}} \partial^\delta_y a(y,\xi).
\intertext{and}
J^j_{\beta ,\gamma ,\delta}(x,\xi ,\theta ) &\equiv (2\pi)^{-d/2}\int e^{-if(x,y,\xi,\theta )}
       g^j_{\beta ,\gamma ,\delta}(x,y,\xi )\, d y .
\end{align}
In fact, by expanding
the Fourier transform in \eqref{eq:3.29}, and using the same notation as
in Lemma \ref{lemma:3.1}, we have that $c_{2,\alpha}$ is a (finite) linear combination
of
      \begin{multline}\label{eq:3.35}
      c_{2,j,\alpha}(x,\xi )
      \\
      = 
              \sum
              \frac{\alpha!}{\beta! \gamma! \delta!}
              \int 
                 e^{i\scal {x}{\theta}}
                 r_{j,\alpha} (x,\xi,\theta)
                 \chi_{j,\xi}(\theta )
                 J^j_{\beta ,\gamma ,\delta}(x,\xi ,\theta )\, d\theta,
       \end{multline}
where the sum is taken over all multi-indices $\beta ,\gamma ,\delta$ such
that $\beta +\gamma +\delta =\alpha$.

\par

In order to estimate $c_{2,j,\alpha}$ we first consider
$J^j_{\beta ,\gamma ,\delta}$ and the factor $g^j_{\beta ,\gamma ,\delta}$
in its integrand. 
By the relations
$$
\tau_{\beta} \in \SG^{0,0,|\beta|}_{1,1,1} \subseteq \SG^{0,0,|\alpha|}_{1,1,1},
\quad
\chi \in \SG^{0,0,0}_{1,1,1},
\quad
a \in \SG^{(\omega_0)}_{r_0,\rho_0},
$$
and $|\beta _j|\le 2l$, it follows that
$$
\partial ^{\gamma}_y \chi \in \SG^{0,-|\gamma|,0}_{1,1,1} 
               \subseteq \SG^{0,0,0}_{1,1,1}
\quad \text{and}\quad
y^{\beta_{j}} a(y,\xi) \in 
               \SG^{(\omega_0\cdot\vartheta_{2l_1,0})} _{\min\{r_0,1\},\rho_0}.
$$
This in turn gives
      \begin{equation}
      \label{eq:3.36}
      \begin{aligned}
      g^j_{\beta\gamma\delta} &\in \SG^{(\omega_3)}_{1,\min\{r_0,1\},\min\{\rho_0,1\}}, 
      \\
      \text{where}\quad \omega _3(x,y,\xi) &= \omega _0(y,\xi)
      \,\vartheta _{2l_1,|\alpha |}(y,\xi).
      \end{aligned}
      \end{equation}

\par

In order to estimate $|J^j_{\beta ,\gamma ,\delta}|$ we consider
the operator $R_3$ in \eqref{eq:3.10.1},
      which is admissible, since
      \begin{multline}
      \label{eq:3.37}
      |f^\prime _y(x,y,\xi,\theta)|   =   |\theta - (\varphi '_y (y,\xi) - \varphi '_{x}(x,\xi))|
      \\
                      \ge  |\theta| - |\varphi '_y (y,\xi) - \varphi '_{x}(x,\xi)|
  %    \\
                      \gtrsim   \norm{\theta} + \norm{\xi}
                      \asymp \norm {(\xi ,\theta )}
                     \gtrsim (\norm{\xi} \norm{\theta})^{\frac{1}{2}},
      \\
      \text{when}\quad
                      (x,y)\in \supp \chi
                     ,\ \theta \in \supp \chi _{j,\xi} ,
      \end{multline}
      provided $\varepsilon_1 \in (0,1)$ in
      the definition of $\chi$ is chosen small enough.
      
      \par
      
      In fact, if $\theta \in \supp \chi _{j,\xi}$, then 
      $|\theta| \ge \varepsilon_2\norm{\xi}/2$.
      Moreover, if $(x,y)\in \chi$, then $|y-x| \le \varepsilon_1 \norm{x}$,
      which gives
      \begin{multline*}
      \lefteqn{
                   \varphi^\prime_{x_j}(y,\xi) - 
                  \varphi^\prime_{x_j}(x,\xi) =
               }
      \\
      = \sum_{k=1}^d\int_{0}^{1} \varphi^{\prime\prime}_{x_j x_k}(x+t(y-x),\xi)
      (y_k -x_k)\, dt
      \\
      \lesssim \varepsilon_1 \norm{x} \norm{\xi}
            \int_{0}^{1} \norm{x+t(y-x)}^{-1} dt
      \lesssim \varepsilon_1 \norm{\xi} \norm{x} \norm{x}^{-1} = \varepsilon_1 \norm{\xi},
      \end{multline*}
and \eqref{eq:3.37} follows by straight-forward applications of these estimates.
%      This implies
%      %
%      \begin{align*}
%      |f^\prime_y(x,y,\xi,\theta)| & \ge |\theta| - |\varphi '_x (y,\xi) - \varphi '_{x}(x,\xi)|
%      \\
%                     & \gtrsim \norm{\theta} - \varepsilon_1 \norm{\xi}
%      \\
%                     &  \gtrsim  \norm{\theta} +
%                             \norm{\theta} - \varepsilon_1 \norm{\xi}
%      \\
%                     &  \gtrsim  \norm{\theta} +
%                             \norm{\xi} - \varepsilon_1 \norm{\xi}
%      \\
%                     & \gtrsim \norm{\theta} +
%                             ( 1-\varepsilon_1)
%                             \norm{\xi},
%      \end{align*}
%      %
%      and \eqref{eq:3.37} follows, with an appropriate choice of $\varepsilon_1$.

\par

      We note that ${^t} R_3$ acts only on $g^j_{\beta\gamma\delta}$, 
      leaving $e^{i\scal{x}{\theta}}$, $r_{j,\alpha}$ and $\chi_{j,\xi}$
      unchanged. By applying \eqref{eq:3.10.2}, \eqref{eq:3.10.3},
      \eqref{eq:3.35} and \eqref{eq:3.36} we get, for any integer $l_0$,
      \begin{multline}\label{gIntegralFormulas}
      |J^j_{\beta ,\gamma ,\delta}(x,\xi ,\theta )|
      \\
      =
      \left | \int e^{-i f(x,y,\xi,\theta)} ({{^t} R_3})^{l_0} g^j_{\beta ,\gamma ,\delta}
      (x,y,\xi,\theta)\,dy \right |
      \\
      \le 
      \int  \frac{1}{|f^\prime _y(x,y,\xi ,\theta )|^{4l_0}} 
                                  \sum_{|\kappa| \le l_0}
                                  | P_{\kappa,l_0}
                                  \partial_y^{\kappa}
                                  g^j_{\beta\gamma\delta}(x,y,\xi,\theta)|\, dy .
      \end{multline}

\par

In the support of the latter integrands we have $|x-y|\le \ep _1\norm x$,
which gives
\begin{equation}\label{normx-yRel}
\begin{gathered}
\norm{x}\asymp \norm{y},\quad |x-y| \lesssim \norm{y},
\quad v(x-y)\lesssim \norm y^{m_0},
\\
v(y)\lesssim\norm{y}^{m_0}
\quad \text{and}\quad |y|\lesssim \norm x,
\end{gathered}
\end{equation}
for a suitable $m_0\in \mathbf Z _+$, which only depends
on $\omega _1$. Here $v\in \mascP (\rr d)$ is chosen such that
$\omega _1(x+y,\xi )\lesssim \omega _1(x,\xi )v(y)$. Hence
it suffices to evaluate the integrals in \eqref{gIntegralFormulas}
over the set
$$
\Omega = \sets {y\in \rr d}{|y|\le C_2\norm x|\ \text{and}\
C_1\norm x \le \norm y\le C_2\norm x},
$$
provided $C_1>0$ is small enough and $C_2>0$ is large enough.

\par

By brute-force computations, \eqref{eq:3.37}, \eqref{gIntegralFormulas},
\eqref{normx-yRel} and \eqref{gIntegralFormulas} we get
      \begin{multline}\label{gIntegralFormulas2}
      |J^j_{\beta ,\gamma ,\delta}(x,\xi ,\theta )|
      \\
      \lesssim 
      \norm \theta ^N \norm {(\xi ,\theta )}^{-4l_0} \sum _{|\kappa| \le l_0}
      \int _\Omega \omega _1(y,\xi )
      \vartheta  _{2l-|\kappa |,|\alpha |}(y,\xi )(\norm \theta +\norm \xi )^{3l_0}
      \norm y^{|\kappa |-l_0}\, dy
      \\
      \lesssim 
      \norm \theta ^N
      \norm {(\xi ,\theta )}^{-4l_0+3l_0} \omega _1(x,\xi )\vartheta 
      _{m_0+2l-l_0,|\alpha |}(x,\xi ) \sum _{|\kappa| \le l_0}
      \int _{|y|\le C\norm x} \, dy
      \\
      \lesssim
      \norm \xi ^{-l_0/2} \norm \theta ^{N-l_0/2} \omega _1(x,\xi )\vartheta 
      _{d+m_0+2l-l_0,|\alpha |}(x,\xi )
      \end{multline}

\par

Inserting this into \eqref{eq:3.35}, we get
      \begin{multline}\label{c2jalphaEst}
      |c_{2,j,\alpha}(x,\xi )|
%      \\
      \lesssim (\Theta _{2,\fy} \omega _{2})(x,\xi)
      \vartheta_{-2l,-|\alpha|}(x,\xi)
      \sum %_{j,\beta+\gamma+\delta = \alpha}
      \int
      |J^j_{\beta ,\gamma ,\delta}(x,\xi ,\theta )|\, d\theta
      \\
      \lesssim \omega _1(x,\xi ) (\Theta _{2,\fy} \omega _{2})(x,\xi)
      \norm x^{d+m_0-l_0}
      \sum %_{j,\beta+\gamma+\delta = \alpha}
      \int \norm \xi ^{-l_0/2} \norm \theta ^{N-l_0/2}
      \, d\theta
      \\
      \lesssim
      \omega _1(x,\xi ) (\Theta _{2,\fy} \omega _{2})(x,\xi)
      \norm x^{-2l}\norm \xi ^{-l_0/2},      
%        &       & \dot
%                  \sum_{|\kappa| \le l_{2}}
%                  \omega_0(y,\xi)\,\vartheta_{2l_1-|\kappa|,|\alpha|}(y,\xi)
%                  \cdot
%      \\
%        &       & \cdot
%                  \sum (\norm{\theta} + \norm{\xi})^{2l_{2}}
%                       \norm{y}^{|\kappa|-l_{2}}
%                       (\norm{\theta} + \norm{\xi})^{l_{2}}\,d \theta dy
%      \\
%        & \lesssim &  \omega_{2,\varphi}(x,\xi)\,\omega_0(x,\xi)\norm{x}^{-2l_{1}}      \cdot
%        \\
%        & & \cdot         
%                  \int  |v(x-y)|\norm{y}^{2l_{1}-l_{2}}\,dy
%                  \int \norm{\theta}^N(\norm{\theta} + \norm{\xi})^{-l_{2}}  \,d\theta                                
%      \\
%        & \lesssim & \omega_{2,\varphi}(x,\xi)\,\omega_0(x,\xi)\norm{x}^{-2l_{1}}\cdot
%        \\
%        & & \cdot
%        		\norm{\xi}^{-\frac{l_2}{2}}
%                  \int  \tilde{v}(\norm{y})|\norm{y}^{2l_{1}-l_{2}}\,dy
%                  \int \norm{\theta}^{N-\frac{l_{2}}{2}}\,d\theta 
%      \\
%        & \lesssim & \omega_{2,\varphi}(x,\xi)\,\omega_0(x,\xi) \norm{x}^{-2l_{1}}
%                  \norm{\xi}^{-\frac{l_{2}}{2}},
%      \end{eqnarray*}
      \end{multline}
      provided
      \[
      	l_{0} > \max\{ 2N, 2l+d+m_0 \} .
      \]
     Here the sums
     should be taken over all $j$ and $\beta$, $\gamma$ and $\delta$
     such that $\beta +\gamma +\delta =\alpha$.
%     
%     In fact, on the support
%     of the integrand, $|x-y|\le \varepsilon_1\norm{x}\Rightarrow\norm{x}
%     \asymp \norm{y}\Rightarrow|x-y|\lesssim\norm{y}\Rightarrow
%     v(|x-y|)\lesssim \tilde{v}(\norm{y})\lesssim\norm{y}^{m_0}$, for 
%     some other polynomial $\tilde{v}$ and a suitable
%     $m_0\in\zz{}_+$.
	%
%\end{enumerate}
%
%
% Continue from here. /JT
%
%
%
%
%
%
%
%
%
%

\par

Since $l$ and $l_0$ can be chosen arbitrarily large, and
\[
\omega_{2,\varphi}(x,\xi)\,\omega_0(x,\xi)\lesssim \norm{x}^m\norm{\xi}^\mu,
\]
for suitable $m,\mu\ge0$, it follows that \eqref{c2alphaEst}
is true for every integers $N_1$ and $N_2$. In particular it
follows that the hypothesis in \cite[Corollary 16]{CoTo} is fulfilled
with $a=h$ and $a_j$ being a suitable linear combination of
$c_\alpha$. This gives the result.
\end{proof}

\par

The next three theorems can be proved by modifying the arguments
given in \cite{coriasco},  similarly to the above proof of Theorem
\ref{thm:0.1}. The relations between Type I and Type II operators,
and the formulae for the formal-adjoints of the involved operators,
explained in Remark \ref{rem:sgsymm}, are useful in the
corresponding arguments.

\par

\begin{thm}
\label{thm:3.1}
Let $r_j,\rho _j\in [0,1]$, $\varphi \in \Ph$ and let $\omega _j
\in \mascP_{r_j,\rho _j}(\rr {2d})$, $j=0,1,2$, be such
that
$$
r_2=1,
\quad r_0=\min\{r_1,1\} ,\quad \rho _0=\min\{ \rho_1,\rho _2,1\},
\quad \omega _0 =\omega_1\cdot (\Theta _{1,\fy} \omega _2),
$$
and $\omega _2\in \mathscr{P}_{r,1}(\rr{2d})$ is
$(\phi,1)$-invariant with respect to $\phi \colon
x \mapsto \varphi^\prime_\xi (x,\xi)$.
Also let $a \in \SG^{(\omega _1)} _{r_1,\rho _1}(\rr {2d})$ and
$p \in \SG^{(\omega _2)}_{1,\rho _2}(\rr {2d})$.%, and let
Then
\begin{alignat*}{2}
\op _\varphi(a) \circ \op (p) &= \op_{\varphi}(c) \operatorname{Mod}
\op _\varphi (\SG ^{(\omega \vartheta _{-\infty ,0})}_0 ),& \quad \rho _1=0,
\\[1ex]
\op_\varphi(a) \circ \op (p) &= \op_{\varphi}(c) \operatorname{Mod}
\op (\mathscr S ),& \quad \rho _1>0,
\end{alignat*}
where the transpose ${^t}c$ of $c \in \SG^{(\omega _0)}
_{r_0,\rho _0}(\rr {2d})$ admits the asymptotic expansion
\eqref{eq:0.3}, after $p$ and $a$ have been replaced by
${^t}p$ and ${^t}a$, respectively.
\end{thm}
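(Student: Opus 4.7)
My plan is to reduce Theorem~\ref{thm:3.1} to Theorem~\ref{thm:0.1} by combining the two symmetries given in Remark~\ref{rem:sgsymm}: the formal $L^2$-adjoint, which interchanges Type~I and Type~II operators having the same phase and amplitude, and the Fourier conjugation~\eqref{eq:typeI-II}, which rewrites a Type~II operator with phase $\varphi$ as a Type~I operator with phase $-\varphi^*$ flanked on both sides by $\cF^{-1}$. The hypotheses of Theorem~\ref{thm:3.1} are obtained from those of Theorem~\ref{thm:0.1} by exchanging the roles of the position index $r$ and the momentum index $\rho$ (with the concomitant swaps $\varphi'_x\leftrightarrow\varphi'_\xi$ and $\Theta_{2,\varphi}\leftrightarrow\Theta_{1,\varphi}$), and since Fourier conjugation implements precisely the swap $x\leftrightarrow\xi$ at the level of symbols, phases and weights, composing these two operations should convert our composition into one already handled.

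Concretely, I would first take the adjoint
\[
(\op_\varphi(a)\circ\op(p))^* = \op(p)^*\circ\op_\varphi(a)^* = \op(p^{\ast})\circ\op_\varphi^*(a),
\]
using Remark~\ref{rem:sgsymm} and the fact that the formal adjoint of a generalized $\SG$ pseudo-differential operator remains in the same symbol class, modulo the usual asymptotic correction. Next, I would insert $\cF^{-1}\cF$ and invoke \eqref{eq:typeI-II} to obtain
\[
\op(p^{\ast})\circ\op_\varphi^*(a) = \cF^{-1}\circ\op(\tilde p)\circ\op_{-\varphi^*}(a^*)\circ\cF^{-1},
\]
where $\tilde p$ is defined by $\op(\tilde p)=\cF\circ\op(p^{\ast})\circ\cF^{-1}$ and, by the standard formulas for Fourier conjugation, lies in $\SG^{({^t}\omega_2)}_{\rho_2,1}$, with position and momentum orders interchanged. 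At this point the hypotheses of Theorem~\ref{thm:0.1} apply to the inner composition $\op(\tilde p)\circ\op_{-\varphi^*}(a^*)$: one checks that $-\varphi^*\in\Phr$, that $\tilde p$ has momentum index $1$ as required, and that the $(\phi,2)$-invariance of ${^t}\omega_2$ with respect to $\xi\mapsto(-\varphi^*)'_x(x,\xi)$ is exactly the restatement of the $(\phi,1)$-invariance of $\omega_2$ with respect to $x\mapsto\varphi'_\xi(x,\xi)$ assumed in Theorem~\ref{thm:3.1}. Theorem~\ref{thm:0.1} then yields $\op(\tilde p)\circ\op_{-\varphi^*}(a^*)=\op_{-\varphi^*}(\tilde c)$, with $\tilde c$ in the correct class and admitting an asymptotic expansion of the form~\eqref{eq:0.3}. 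Unwinding the two Fourier conjugations via~\eqref{eq:typeI-II} once more, and taking adjoint, yields $\op_\varphi(a)\circ\op(p)=\op_\varphi(c)$ for some $c\in\SG^{(\omega_0)}_{r_0,\rho_0}$.

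The main technical obstacle will be to track the asymptotic expansion of $c$ through the chain \emph{adjoint $\to$ Fourier conjugation $\to$ Theorem~\ref{thm:0.1} $\to$ Fourier conjugation $\to$ adjoint}, and to verify that the transpose ${^t}c$ satisfies \eqref{eq:0.3} with $p$ and $a$ replaced by ${^t}p$ and ${^t}a$. At the level of leading terms, each Fourier conjugation acts on a $\SG$ symbol as the transposition $b\mapsto {^t}b$ (up to the sign in $\xi$ absorbed by the passage $\varphi\mapsto-\varphi^*$), while each formal adjoint is the identity modulo controlled lower-order corrections in the generalized $\SG$ calculus; composing with the transposition effectively performed by Theorem~\ref{thm:0.1} on the ``mirror'' composition then produces the claimed expansion. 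The residual terms transfer correctly to $\SG^{(\omega\vartheta_{-\infty,0})}_\rho$ when $\rho_1=0$ and to $\mathscr S$ when $\rho_1>0$, because both adjoint and Fourier conjugation preserve these residual classes.
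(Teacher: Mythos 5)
The paper never writes out a proof of Theorem \ref{thm:3.1}; it only records that the result follows by modifying the direct argument of Theorem \ref{thm:0.1}, adding that the Type I/Type II and adjoint relations of Remark \ref{rem:sgsymm} are ``useful in the corresponding arguments''. Your proposal commits entirely to that second hint, turning it into a formal reduction \emph{adjoint $\to$ Fourier conjugation $\to$ Theorem \ref{thm:0.1}}, and the bookkeeping you indicate (the swaps $r\leftrightarrow\rho$, $\Theta_{1,\fy}\leftrightarrow\Theta_{2,\fy}$, $(\phi,1)$- versus $(\phi,2)$-invariance, and the residual classes) does line up with the statement. This is a legitimate alternative to what the authors presumably intend, namely the mirror-image direct computation in which $c(x,\eta)$ is written as an oscillatory integral in $(y,\xi)$ and $\varphi(x,\xi)-\varphi(x,\eta)$ is Taylor-expanded about $\xi=\eta$, dually to the expansion of $\varphi(y,\xi)-\varphi(x,\xi)$ about $y=x$ in the proof of Theorem \ref{thm:0.1}. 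Two points in your route need more care than you give them. First, \eqref{eq:typeI-II} sandwiches the Type I operator between \emph{two} copies of $\cF^{-1}$, not $\cF^{-1}$ and $\cF$, so unwinding the identities produces a parity operator; since a general $\omega\in\mascP_{r,\rho}(\rr{2d})$ need not be even, you must carry the reflected weight through the argument and check that the two reflections cancel at the end. Second, and more substantially, $\op(p)^*$ is not $\op(p^*)$ with $p^*(x,\xi)=\overline{p(\xi,x)}$: its Kohn--Nirenberg symbol is only given by an asymptotic expansion of the type \eqref{eq:1.11.2}. Your chain therefore composes three expansions (the adjoint of $p$, the expansion \eqref{eq:0.3} delivered by Theorem \ref{thm:0.1}, and the final adjoint), and the claim that the net result collapses exactly to \eqref{eq:0.3} with ${}^tp$ and ${}^ta$ --- with no leftover $D_x^\alpha D_\xi^\alpha$ terms from the adjoint --- is precisely the content of the ``tracking'' step you defer. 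That the cancellation must occur is signalled by comparison with Theorems \ref{thm:3.2} and \ref{thm:3.3}, whose expansions do retain the adjoint symbol $q$ of \eqref{eq:1.11.2}, whereas the expansion for ${}^tc$ in Theorem \ref{thm:3.1} is stated in terms of $p$ itself; but it still has to be verified, e.g.\ by the standard argument that two mutually inverse formal series compose to the identity modulo remainders of the type \eqref{eq:gensgasexp}. The direct mirror-image computation avoids this issue altogether, which is probably why the statement is formulated with transposes rather than adjoints.
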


\par

\begin{thm}
\label{thm:3.2}
Let $r_j,\rho _j\in [0,1]$, $\varphi \in \Ph$ and let $\omega _j
\in \mascP_{r_j,\rho _j}(\rr {2d})$, $j=0,1,2$, be such
that
$$
\rho _2=1,
\quad r_0=\min\{r_1,r_2,1\} ,\quad \rho _0=\min\{ \rho_1,1\},
\quad \omega _0 =\omega_1\cdot (\Theta _{2,\fy} \omega _2),
$$
and $\omega _2\in \mathscr{P}_{r,1}(\rr{2d})$ is
$(\phi,2)$-invariant with respect to $\phi \colon
\xi \mapsto \varphi ^\prime _x(x,\xi)$.
Also let $b \in \SG^{(\omega _1)} _{r_1,\rho_1}(\rr {2d})$,
$p \in \SG^{(\omega _2)}_{r_2,1}(\rr {2d})$, $\psi$
be the same as in \eqref{eq:0.4}, and let $q \in \SG^{(\omega _2)}
_{r_2,1}(\rr {2d})$ be such that
\begin{equation}
\label{eq:1.11.2}
q(x,\xi)\sim\sum_{\alpha}\frac{i^{|\alpha|}}{\alpha!}D^\alpha_x D^\alpha_\xi\overline{p(x,\xi)}.
\end{equation}
Then
\begin{alignat*}{2}
\op _\varphi ^*(b) \circ \op(p) &= \op_{\varphi}(c) \operatorname{Mod}
\op _\varphi ^*(\SG ^{(\omega \vartheta _{0,-\infty })}_0 ),& \quad r_1=0,
\\[1ex]
\op _\varphi ^*(b) \circ \op(p) &= \op_{\varphi}(c) \operatorname{Mod}
\op (\mathscr S ),& \quad r_1>0,
\end{alignat*}
where $c \in \SG^{(\omega _0)}_{r_0,\rho _0}(\rr {2d})$
admits the asymptotic expansion
\begin{equation}
\label{eq:3.38bis}
c(x,\xi) \sim \sum_{\alpha} \frac{i^{|\alpha|}}{\alpha!} 
(D^\alpha_\xi q)(x, \varphi^\prime_x(x,\xi))
D^\alpha_y \!\!\left[ e^{i \psi(x,y,\xi)} b(y,\xi) \right ]_{y=x}.
\end{equation}
\end{thm}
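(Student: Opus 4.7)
The plan is to reduce the statement to Theorem \ref{thm:0.1} via formal $L^2$-adjunction, using the relations collected in Remark \ref{rem:sgsymm}. The symbol $q$ defined by \eqref{eq:1.11.2} is precisely the asymptotic expansion of the symbol of $\op(p)^*$ in the generalized $\SG$ pseudo-differential calculus, so that $\op(p)^* = \op(q)$ modulo a smoothing operator with symbol in $\mathscr S(\rr{2d})$ (this is a standard feature of generalized $\SG$ pseudo-differential operators recalled in Subsection \ref{subs:1.3}). In particular $q\in\SG^{(\omega_2)}_{r_2,1}(\rr{2d})$, since adjunction preserves the weight class and parameters of $p$.

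Next I would take formal adjoints on both sides. By Remark \ref{rem:sgsymm} one has $(\op_\varphi^*(b))^* = \op_\varphi(b)$, whence
\[
	\bigl(\op_\varphi^*(b)\circ\op(p)\bigr)^*
	= \op(p)^*\circ\op_\varphi(b)
	= \op(q)\circ\op_\varphi(b)\ \ \operatorname{Mod}\ \op(\mathscr S).
\]
The hypotheses of Theorem \ref{thm:3.2} are arranged to match precisely those of Theorem \ref{thm:0.1} under the substitution $(p,a)\to(q,b)$: the weight $\omega_2$ is $(\phi,2)$-invariant with respect to $\xi\mapsto\varphi^\prime_x(x,\xi)$, the triples $(r_j,\rho_j)$, $j=0,1,2$, satisfy the same relations, and $\omega_0=\omega_1\cdot(\Theta_{2,\varphi}\omega_2)$. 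Applying Theorem \ref{thm:0.1} to $\op(q)\circ\op_\varphi(b)$ therefore produces $\op_\varphi(c)$ modulo the appropriate residual class, with $c\in\SG^{(\omega_0)}_{r_0,\rho_0}(\rr{2d})$ admitting the asymptotic expansion \eqref{eq:0.3} in which $p$ and $a$ are replaced by $q$ and $b$; this coincides with \eqref{eq:3.38bis}.

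Taking adjoints once more and invoking $\op_\varphi(c)^* = \op_\varphi^*(c)$ from Remark \ref{rem:sgsymm}, together with the observation that the residual class $\op_\varphi(\SG^{(\omega\vartheta_{0,-\infty})}_0)$ of Theorem \ref{thm:0.1} is mapped under adjunction into $\op_\varphi^*(\SG^{(\omega\vartheta_{0,-\infty})}_0)$, and that $\op(\mathscr S)$ is self-adjoint, yields the identity stated in the theorem modulo the residual class described there (in both cases $r_1=0$ and $r_1>0$).

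The main technical checkpoint is the verification that the $(\phi,2)$-invariance of $\omega_2$ and the resulting composite weight $\omega_0$ are preserved when $p$ is replaced by its adjoint symbol $q$. Since the leading term in the expansion \eqref{eq:1.11.2} coincides with $\overline{p}$ and the corrections lie in strictly better weight classes $\SG^{(\omega_2\vartheta_{-|\alpha|,-|\alpha|})}_{r_2,1}$, all invariance properties required to apply Theorem \ref{thm:0.1} are inherited automatically. The bookkeeping of the remainder classes under adjunction is routine, given Lemma \ref{lemma:omegainv} and the considerations in Remark \ref{rem:3.2}.
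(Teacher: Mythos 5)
Your proposal is correct and follows essentially the route the paper itself indicates: the paper gives no detailed proof of Theorem \ref{thm:3.2}, saying only that it is obtained by modifying the proof of Theorem \ref{thm:0.1} with the help of the formal-adjoint relations of Remark \ref{rem:sgsymm}, which is precisely the adjunction-plus-Theorem-\ref{thm:0.1} reduction you carry out (with $q$ playing the role of the symbol of $\op (p)^*$). Note only that the adjunction argument naturally produces the composition as a type II operator $\op _\varphi ^*(c)$, consistent with the summary in the introduction and with the stated residual class, so the $\op _{\varphi}(c)$ in the statement should be read as $\op _{\varphi}^*(c)$.
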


\par

\begin{thm}
\label{thm:3.3}
Let $r_j,\rho _j\in [0,1]$, $\varphi \in \Ph$ and let $\omega _j
\in \mascP_{r_j,\rho _j}(\rr {2d})$, $j=0,1,2$, be such
that
$$
r_2=1,
\quad r_0=\min\{r_1,1\} ,\quad \rho _0=\min\{ \rho_1,\rho _2,1\},
\quad \omega _0 =\omega _1\cdot (\Theta _{1,\fy} \omega _2),
$$
and $\omega _2\in \mathscr{P}_{r,1}(\rr{2d})$ is
$(\phi,1)$-invariant with respect to $\phi \colon
x \mapsto \varphi^\prime_\xi (x,\xi)$.
Also let $a \in \SG^{(\omega _1)} _{r_1,\rho _1}(\rr {2d})$ and
$p \in \SG^{(\omega _2)}_{1,\rho _2}(\rr {2d})$.
Then
\begin{alignat*}{2}
\op (p)\circ \op _\varphi ^*(b) &= \op_{\varphi}(c) \operatorname{Mod}
\op _\varphi ^*(\SG ^{(\omega \vartheta _{-\infty ,0})}_0 ),& \quad \rho _1=0,
\\[1ex]
\op (p)\circ \op _\varphi ^*(b) &= \op_{\varphi}(c) \operatorname{Mod}
\op (\mathscr S ),& \quad \rho _1>0,
\end{alignat*}
where the transpose ${^t}c$ of $c \in \SG^{(\omega _0)}
_{r_0,\rho _0}(\rr {2d})$ admits the asymptotic expansion
\eqref{eq:3.38bis}, after $q$ and $b$ have been replaced by
${^t}q$ and ${^t}b$, respectively.
\end{thm}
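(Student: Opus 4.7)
The plan is to reduce Theorem \ref{thm:3.3} to Theorem \ref{thm:3.1} via the formal-adjoint correspondence between Type I and Type II operators provided by Remark \ref{rem:sgsymm}. First, since $p \in \SG^{(\omega_2)}_{1,\rho_2}(\rr{2d})$, the standard formal-adjoint formula in the Kohn--Nirenberg calculus, applied in the generalized $\SG$ setting by means of Proposition \ref{propasymp}, yields a symbol $q \in \SG^{(\omega_2)}_{1,\rho_2}(\rr{2d})$ with $\op(p)^* = \op(q)$ and $q \sim \sum_{\alpha} \frac{i^{|\alpha|}}{\alpha!}\, D^\alpha_x D^\alpha_\xi \overline{p(x,\xi)}$, that is, $q$ satisfies \eqref{eq:1.11.2}. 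Using the identity $\op_\varphi^*(b)^* = \op_\varphi(b)$ from Remark \ref{rem:sgsymm}, I rewrite
\[
\op(p) \circ \op_\varphi^*(b) = \bigl( \op_\varphi^*(b)^* \circ \op(p)^* \bigr)^* = \bigl( \op_\varphi(b) \circ \op(q) \bigr)^*.
\]

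Next, I apply Theorem \ref{thm:3.1} to $\op_\varphi(b) \circ \op(q)$, relabeling $(a, p) \mapsto (b, q)$. The hypotheses of Theorem \ref{thm:3.1} and those of the present statement coincide under this substitution: both require $r_2 = 1$, $r_0 = \min\{r_1, 1\}$, $\rho_0 = \min\{\rho_1, \rho_2, 1\}$, $\omega_0 = \omega_1 \cdot (\Theta_{1,\varphi} \omega_2)$, together with the $(\phi,1)$-invariance of $\omega_2$ relative to $\phi \colon x \mapsto \varphi^\prime_\xi(x,\xi)$. Theorem \ref{thm:3.1} then produces $\tilde c \in \SG^{(\omega_0)}_{r_0,\rho_0}(\rr{2d})$ such that $\op_\varphi(b) \circ \op(q) = \op_\varphi(\tilde c)$ modulo $\op_\varphi(\SG^{(\omega\vartheta_{-\infty,0})}_0)$ (when $\rho_1 = 0$) or modulo $\op(\mathscr S)$ (when $\rho_1 > 0$), and such that ${}^t \tilde c$ admits the asymptotic expansion \eqref{eq:0.3} after $p$ and $a$ there are replaced by ${}^t q$ and ${}^t b$, respectively.

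Finally, taking formal adjoints and using $\op_\varphi(\tilde c)^* = \op_\varphi^*(\tilde c)$ from Remark \ref{rem:sgsymm}, together with the stability of the residual classes under $Q \mapsto Q^*$ (which maps $\op_\varphi(\SG^{(\omega\vartheta_{-\infty,0})}_0)$ onto $\op_\varphi^*(\SG^{(\omega\vartheta_{-\infty,0})}_0)$ and preserves $\op(\mathscr S)$), I conclude $\op(p) \circ \op_\varphi^*(b) = \op_\varphi^*(c)$ modulo the remainder asserted in the statement, with $c := \tilde c$. The asymptotic expansion of ${}^t c = {}^t \tilde c$ provided by Theorem \ref{thm:3.1} is term-by-term the right-hand side of \eqref{eq:0.3} with $(p,a)$ replaced by $({}^t q, {}^t b)$, which is the same formula as \eqref{eq:3.38bis} with $(q, b)$ replaced by $({}^t q, {}^t b)$, so the claimed expansion holds.

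The main obstacle I foresee is strictly bookkeeping: verifying that every parameter constraint, weight-invariance assumption and residual class transports correctly through the adjoint rewriting, so that the hypotheses of Theorem \ref{thm:3.1} are indeed met and the residual class obtained is no larger than what is stated. No further oscillatory-integral estimates are required, since these were already carried out in the proof of Theorem \ref{thm:0.1} and inherited by Theorem \ref{thm:3.1}.
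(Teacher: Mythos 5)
Your proposal is correct and follows the route the paper itself indicates: the paper gives no detailed proof of Theorem \ref{thm:3.3}, stating only that it follows from the formal-adjoint relations of Remark \ref{rem:sgsymm} together with the arguments for Theorem \ref{thm:0.1}, and your reduction $\op(p)\circ\op_\varphi^*(b)=\bigl(\op_\varphi(b)\circ\op(q)\bigr)^*$ followed by an application of Theorem \ref{thm:3.1} (whose hypotheses transfer verbatim under the relabeling $(a,p)\mapsto(b,q)$) is exactly that. The only point worth flagging is that the operator you obtain is the type II operator $\op_\varphi^*(c)$, consistent with the formula $\op(p)\circ\op_\varphi^*(b)=\op_\varphi^*(c_2)$ announced in the introduction; the occurrence of $\op_\varphi(c)$ in the displayed conclusion of the theorem is evidently a misprint, so your conclusion is the intended one.
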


\par

\subsection{Composition between $\SG$ FIOs of type I and type II}
\label{subs:2.4}
The subsequent Theorems \ref{thm:3.4} and \ref{thm:3.5} deal with the
composition of a type I operator with a type II operator, and show that such compositions
are pseudo-differential operators with symbols in natural classes. We give the argument
only for Theorem \ref{thm:3.4}, since the proof of Theorem \ref{thm:3.5} follows, with
similar modifications, from the one given in \cite{coriasco} for the corresponding
composition result.

\par

The main difference, with respect to the arguments in \cite{coriasco}
for the analogous composition results, is that we again make use, in both cases, of 
the generalized asymptotic expansions introduced in Definition \ref{def:gensgasexp}.
This allows to overcome the additional difficulty, not arising there, that the amplitudes
appearing in the computations below involve weights which are still polynomially bounded, but
which do not satisfy, in general, the moderateness condition \eqref{moderate}. On the 
other hand, all the terms appearing in the associated asymptotic expansions belong 
to $\SG$ classes with weights of the form $\widetilde{\omega}_{2,\varphi}\cdot\vartheta_{-k,-k}$,
where $\widetilde{\omega}=\omega_1\cdot\omega_2$. In view of the results
in \cite{CoTo}, this allows to conclude as desired, since the
remainders are of the  forms given in Proposition \ref{propasymp}.

\par

In order to formulate our next result, it is convenient to let $S_\fy$ with $\fy \in \Ph$,
be the operator, defined by the formulas
\begin{equation}\label{SvarphiDef}
\begin{gathered}
(S_\fy f)(x,y,\xi ) = f(x,y,\Phi (x,y,\xi ))\cdot \left | \det \Phi '_\xi (x,y,\xi )\right |
\\[1ex]
\text{where}\quad
\int _0^1 \fy _x'(y+t(x-y),\Phi (x,y,\xi ))\, dt =\xi .
\end{gathered}
\end{equation}
That is, for every fixed $x,y\in \rr d$, $\xi \mapsto \Phi (x,y,\xi )$ is the inverse of the map
\begin{equation}\label{eq:diffeo}
\xi \mapsto \int _0^1 \fy _x'(y+t(x-y),\xi )\, dt .
\end{equation}
Notice that, as proved in \cite{coriasco}, the map \eqref{eq:diffeo} is indeed invertible for $(x,y)$ belonging to the 
support of the elements of $\Xi ^\Delta (\ep )$, provided $\ep$ is chosen suitably small, and it turns out to be, in that case, a $\SG$ diffeomorphism with $\SG^0$ parameter dependence.

\par

\begin{thm}
\label{thm:3.4}
Let $r_j\in [0,1]$, $\varphi \in \Ph$ and let $\omega _j
\in \mascP_{r_j,1}(\rr {2d})$, $j=0,1,2$, be such that $\omega _1$ and
$\omega _2$ are $(\phi,2)$-invariant
with respect to $\phi \colon
\xi \mapsto (\varphi ^\prime _x)^{-1}(x,\xi)$,
$$
\quad r_0=\min\{r_1,r_2,1\} \quad \text{and}
\quad \omega _0(x,\xi ) =\omega_1(x,\phi (x,\xi ))\omega_{2}(x,\phi (x,\xi )),
$$
Also let $a \in \SG^{(\omega _1)} _{r_1,1}(\rr {2d})$ and
$b \in \SG^{(\omega _2)}_{r_2,1}(\rr {2d})$.
%%%
Then
\begin{equation*}
\op _\varphi (a) \circ \op _\varphi ^*(b) = \op (c),
\end{equation*}
for some $c \in \SG^{(\omega _0)}_{r_0,1}(\rr {2d})$.
Furthermore, if $\ep \in (0,1)$, $\chi \in \Xi ^\Delta (\ep )$,
$c_0(x,y,\xi )= a(x,\xi )b(y,\xi )\chi (x,y)$ and $S_\fy$ is given by
\eqref{SvarphiDef}, then $c$ admits the asymptotic expansion
\begin{equation}
\label{eq:3.38}
c(x,\xi) \sim \sum _{\alpha} \frac{i^{|\alpha |}}{\alpha !} 
(D^\alpha _yD^\alpha_\xi (S_\fy c_0))(x,y,\xi) \big |_{y=x}.
\end{equation}
\end{thm}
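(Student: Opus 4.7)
The plan is to reduce $\op_\varphi(a)\circ\op_\varphi^*(b)$ to an oscillatory integral with phase $\langle x-y,\eta\rangle$ by a change of contour in the fibre variable, using the diffeomorphism $\Phi$ implicit in \eqref{SvarphiDef}. First I would compute the distribution kernel of the composition directly from \eqref{eq:0.1} and \eqref{eq:0.1.0}:
\begin{equation*}
K(x,y) = (2\pi)^{-d}\int e^{i(\varphi(x,\xi)-\varphi(y,\xi))}\,a(x,\xi)\,\overline{b(y,\xi)}\,d\xi.
\end{equation*}
Pick $\chi\in\Xi^\Delta(\varepsilon)$ with $\varepsilon$ so small that \eqref{eq:diffeo} is an $\SG$ diffeomorphism with $\SG^0$ parameter dependence on $\supp\chi$ (such an $\varepsilon$ exists by the remark following \eqref{SvarphiDef} and the analysis in \cite{coriasco}). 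Split $K = K_0 + K_1$ using $\chi(x,y)$ and $1-\chi(x,y)$, respectively.

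For $K_1$, I would show $K_1\in\mathscr S(\rr{2d})$ by a regularisation argument entirely parallel to Lemma \ref{lemma:3.5}: on $\supp(1-\chi)$ one has $|x-y|\gtrsim\norm x+\norm y$, so repeated integration by parts with the operators $\widetilde R_1$ (acting on $\xi$ via the identity $\widetilde R_1 e^{i\varphi(x,\xi)} = e^{i\varphi(x,\xi)}$ applied symmetrically in $x$ and $y$, exploiting $\varphi\in\Ph$) produces arbitrary decay in $x$, $y$ jointly. For $K_0$, I would apply the mean-value identity
\begin{equation*}
\varphi(x,\xi)-\varphi(y,\xi) = \Bigl\langle x-y,\,\int_0^1 \varphi'_x(y+t(x-y),\xi)\,dt\Bigr\rangle,
\end{equation*}
and perform the change of variables $\xi = \Phi(x,y,\eta)$ defined by \eqref{SvarphiDef}, valid on $\supp\chi$. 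This yields
\begin{equation*}
K_0(x,y) = (2\pi)^{-d}\int e^{i\langle x-y,\eta\rangle}\,(S_\varphi c_0)(x,y,\eta)\,d\eta,
\end{equation*}
with $c_0(x,y,\xi)=\chi(x,y)a(x,\xi)\overline{b(y,\xi)}$, so $\op_\varphi(a)\circ\op_\varphi^*(b) = \op(c_{\mathrm{amp}})$ with amplitude $c_{\mathrm{amp}} = S_\varphi c_0$ in the sense of Definition \ref{def:psidoamp}.

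Next I would verify the amplitude class. Since $a\in\SG^{(\omega_1)}_{r_1,1}$ and $b\in\SG^{(\omega_2)}_{r_2,1}$, we have $c_0\in\SG^{(\widetilde\omega)}_{\min\{r_1,r_2\},\min\{r_1,r_2\},1}$ on $\supp\chi$ with $\widetilde\omega(x,y,\xi)=\omega_1(x,\xi)\omega_2(y,\xi)$. The map $(x,y,\xi)\mapsto(x,y,\Phi(x,y,\xi))$ is an $\SG$ diffeomorphism with $\SG^0$ parameter dependence in the fibre variable (as stated after \eqref{eq:diffeo}), $\Phi(x,x,\xi)=\phi(x,\xi)$, and its Jacobian factor is bounded with bounded derivatives of the appropriate $\SG$ type. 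Thus, by Remark \ref{rem:3.2} and the $(\phi,2)$-invariance of $\omega_1$ and $\omega_2$, $S_\varphi c_0$ lies in an $\SG$ amplitude class with weight controlled by $\widetilde\omega_\Phi(x,y,\xi):=\omega_1(x,\Phi(x,y,\xi))\omega_2(y,\Phi(x,y,\xi))$; evaluating at $y=x$ reproduces $\omega_0$.

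Finally, I would apply the standard amplitude-to-symbol reduction: using Taylor expansion around $y=x$ inside the oscillatory integral and integration by parts in $\xi$, one obtains the formal sum in \eqref{eq:3.38} with terms $c_\alpha(x,\xi)=\frac{i^{|\alpha|}}{\alpha!}(D_y^\alpha D_\xi^\alpha (S_\varphi c_0))(x,y,\xi)\big|_{y=x}$. A direct symbol estimate based on Lemma \ref{lemma:omegainv}, $\varphi\in\Ph$, and the chain rule shows that $c_\alpha\in\SG^{(\omega_0\vartheta_{-k_\alpha,-k_\alpha})}_{r_0,1}$ with $k_\alpha\to\infty$ as $|\alpha|\to\infty$ (in the appropriate sense of non-positive indices when $r_0>0$), so the formal sum is a generalised $\SG$ asymptotic expansion as in Definition \ref{def:gensgasexp}, and Proposition \ref{propasymp} together with \cite[Corollary 16]{CoTo} yields a symbol $c\in\SG^{(\omega_0)}_{r_0,1}$ with $c\sim\sum c_\alpha$ and remainder of the forms in \eqref{eq:gensgasexp}. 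The main obstacle will be the bookkeeping in the second step: controlling that the amplitude $S_\varphi c_0$ genuinely belongs to the symbol class associated with the product weight $\omega_0$, since a priori the weight obtained from the change of variables is only $(\phi,2)$-pulled-back, and it is precisely the $(\phi,2)$-invariance hypothesis that rescues the moderateness required to apply the Weyl--H\"ormander asymptotic-expansion machinery.
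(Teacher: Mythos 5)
Your overall route coincides with the paper's: the same kernel computation, the same cutoff decomposition $K=K_0+K_1$ with $\chi\in\Xi^\Delta(\varepsilon)$, the same mean-value identity and change of variables $\xi=\Phi(x,y,\eta)$ turning $K_0$ into $\op(S_\varphi c_0)$, and the same reduction of the amplitude $S_\varphi c_0$ to a symbol via the generalized asymptotic expansions of Proposition \ref{propasymp} and \cite[Corollary 16]{CoTo}, with the $(\phi,2)$-invariance playing exactly the role you identify at the end.

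There is, however, a genuine gap in your treatment of the off-diagonal piece $K_1$. The kernel of the composition is an integral in $\xi$ only; there is no auxiliary frequency variable $\eta$ as in Lemma \ref{lemma:3.5}, so the operators $\widetilde R_1$ (which integrate by parts in $y$) and $R_2$ cannot be transplanted, and the identity $\widetilde{R}_1 e^{i\varphi(x,\xi)}=e^{i\varphi(x,\xi)}$ is of no use here because the phase is the difference $\varphi(x,\xi)-\varphi(y,\xi)$: a regularizer adapted to $\varphi(x,\xi)$ alone has a transpose that hits $e^{-i\varphi(y,\xi)}$ and produces factors of size $\norm{\varphi'_\xi(y,\xi)}\asymp\norm{y}$ which cancel whatever gain in $\norm{x}$ you obtain (take $y=-x$ with $|x|$ large, which lies in $\supp(1-\chi)$). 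The only usable non-stationarity is $f'_\xi(x,y,\xi)=\varphi'_\xi(x,\xi)-\varphi'_\xi(y,\xi)$, and the whole step rests on the lower bound $|f'_\xi(x,y,\xi)|\gtrsim|x-y|\gtrsim\norm{x}+\norm{y}$ on $\supp(1-\chi)$. This bound does \emph{not} follow from $\varphi\in\Ph$: it requires the global invertibility of $x\mapsto\varphi'_\xi(x,\xi)$ with uniformly bounded inverse differential, i.e.\ the regularity $\varphi\in\Phr$ (via Proposition \ref{prop:3.2}); the paper's own proof invokes $\Phr$ both here and for the lower bound on the Jacobian of $\tilde{d}_x\varphi$ in the diagonal piece, even though the statement reads $\Ph$. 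Once you replace your regularizer by the operator $R_3$ of Lemma \ref{lemma:3.4.1} built from $f=\varphi(x,\xi)-\varphi(y,\xi)$ and make the regularity hypothesis explicit, the rest of your argument goes through as in the paper.
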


\par

\begin{proof}
Let us write explicitly the composition for $u \in \cS$. We find
\begin{align*}
\op_\varphi(a)\circ&\op^*_\varphi(b) u(x)
\\
& \!=\!
      (2\pi)^{-d}\int e^{i \varphi(x,\xi)} \, a(x,\xi)
      \left[\int e^{-i \varphi(y,\xi)} \, \overline{b(y,\xi)} \, u(y) \, dy\right] \!d \xi 
\\
& \!=\!
      (2\pi)^{-d}\int  e^{i f(x,y,\xi)} \, q(x,y,\xi) \, u(y) \, dy d \xi, 
\end{align*}
where we have set $f(x,y,\xi) = \varphi(x,\xi) - \varphi(y,\xi)$
and $q(x,y,\xi) = a(x,\xi)\cdot\overline{b(y,\xi)} \in \SG^{(\omega)}_{r_1,r_2,1}$. Let us
choose $\chi \in \Xi^\Delta(\varepsilon)$, $\varepsilon\in(0,1)$, and write
\begin{eqnarray*}
\op_\varphi(a)\circ\op^*_\varphi(b) u(x)
& = & 
      (2\pi)^{-d}\int e^{i f(x,y,\xi)} \, c_{0}(x,y,\xi) \, u(y) \, dy d \xi 
\\
& + &
      (2\pi)^{-d}\int e^{i f(x,y,\xi)} \, c_{1}(x,y,\xi) \, u(y) dy d\xi
\\
& = & (C_{0} + C_{1}) u(x)
\end{eqnarray*}
with $c_{0}(x,y,\xi) = \chi(x,y) q(x,y,\xi)$ and 
$c_{1}(x,y,\xi)=(1-\chi(x,y))q(x,y,\xi)$. Of course,
$c_0,c_1\in\SG^{(\omega)}_{r,s,1}$. We begin by proving that, under our hypotheses, 
$C_{1}$ is a smoothing operator. Then we will show that
$C_0$ can be rewritten as the $\SG$ pseudo-differential operator described in the statement, 
provided $\varepsilon\in(0,1)$ is chosen suitably small.

\begin{enumerate}

\item \textbf{$C_{1}$ is smoothing.} \newline
      First of all, notice that we have 
      $|x-y| \ge \frac{\varepsilon}{2} \norm{x}$ on $\supp{c_{1}}$. Then, 
      in the integral defining $C_1 u(x)$, we can use the operator 
      \[
      	R_3=\frac{1}{|f^\prime_\xi(x,y,\xi)|^{2}}
      \sum_{k=1}^n f^\prime_{\xi_j}(x,y,\xi) \, D_{\xi_j},
      \]
      analogous to that defined in \eqref{eq:3.10.1}.
      In fact, let us set $v = \varphi '_{\xi} (x,\xi)$ and 
      $w = \varphi '_{\xi} (y,\xi)$. By making use of Proposition 
      \ref{prop:3.2} and in view of $\varphi \in \SG^{1,1}_{1,1}$, we can write,
      for a suitable constant $M>0$,
      \begin{eqnarray*}
      |x - y| &  =  & |  (\varphi '_{\xi} )^{-1}(v,\xi) 
                       - (\varphi '_{\xi} )^{-1}(w,\xi)|
      \\
              &  =  & \left|
                         \int_{0}^{1}  \scal{v - w}{d_{x}(\varphi '_{\xi} )^{-1}(tv + (1-t)w,\xi)}\,dt 
                      \right|
      \\
              & \le & |v-w| \sup_{\rr{d} \times \rr{d}}
                            \|d_{x}(\varphi '_{\xi} )^{-1}
                                (z,\xi)  \|
      \\
              & \le & M |  \varphi '_{\xi} (x,\xi)
                         - \varphi '_{\xi} (y,\xi)| 
      \\
              &  =  & M |  f^\prime_\xi(x,y,\xi)|,
      \end{eqnarray*}
      which implies
      \[
      		|f^\prime_\xi(x,y,\xi)| \gtrsim |x-y| \gtrsim \norm{x} + \norm{y}
      \]
      on $\supp{c_1}$. Then, using $R_3 e^{if}=e^{if}$,
      \eqref{eq:3.10.2}, \eqref{eq:3.10.3}, \eqref{eq:3.10.4}
      and again $\varphi \in \SG^{1,1}_{1,1}$, for any integer $l$,
      \[
      C_{1}u(x) = (2\pi)^{-d}\int  e^{if(x,y,\xi)} \, 
                                           (({{^t} R_3})^l c_{1})(x,y,\xi) \,
                                           u(y)\,dyd \xi
      \]
      and
      \[  
      \begin{aligned}
      (({^t R_3})^l c_{1}) (x,y,\xi)
                &   =     \frac{1}{|f^\prime_\xi(x,y,\xi)|^{4l}}
                           \sum_{|\alpha| \le l}
                           P_{l\alpha}
                           \partial_{\xi}^\alpha c_{1} (x,y,\xi)
      \\       
                & \lesssim   \frac{\sum_{|\alpha| \le l}
                                 (\norm{x} + \norm{y})^{3l}
                                 \norm{\xi}^{|\alpha|-l}
                                 \omega_1(x,\xi)\,\omega_2(y,\xi)
                                 \norm{\xi}^{-|\alpha|}
                                }{(\norm{x} + \norm{y})^{4l}}
      \\       
                &   \lesssim    \frac{
                	               \omega_1(x,y,\xi)
                                }{(\norm{x} + \norm{y})^l}
                                \norm{\xi}^{-l}
      \end{aligned}
      \]                          
      Then, we can rewrite $C_1$ as
      \begin{eqnarray*}
      C_{1}u(x) & = & (2\pi)^{-d} \int \left[
       \int e^{i f(x,y,\xi)} \, 
                                           ({^t R_3})^l c_{1}(x,y,\xi) \, d \xi \; 
                                           \right] u(y)\,dy      \\                                  
                & = & \int k_1(x,y)\, u(y)\,dy,
      \end{eqnarray*}
      with an arbitrarily chosen large integer $l$. Recalling that $\omega$, by \eqref{moderate},
      is polynomially bounded, and  
      $\norm{x} + \norm{y} \ge (\norm{x} \norm{y})^{\frac{1}{2}}$, 
      it follows $k_1(x,y)\lesssim (\norm{x}\norm{y})^{-N}$ for any integer $N$.
      The estimates for the derivatives of $D^\alpha_x D^\beta_y k_1(x,y)$ follow
      similarly by differentiation
      under the integral sign, since then we just have to start with some
      other $\tilde{c}_1\in\SG^{(\tilde{\tilde{\omega}})}_{r,s,1}$.
        
\item \textbf{$C_{0}$ is a generalized $\SG$ pseudo-differential operator.} \newline
      On $\supp{c_{0}}$ we have $|x-y|\le\varepsilon\norm{x}\Rightarrow 
      \norm{x}  \asymp \norm{y}$. Let us define,
      \[
      \tilde{d}_{x} \varphi(x,y,\xi) = \int_{0}^{1}  
      \varphi '_x (y+t(x-y),\xi)\,dt.
      \]
      In \cite{coriasco} it has been proved that 
      \[
      	\phi\colon\rr{d}\times\rr{2d}:(\xi,(x,y))\mapsto\phi(\xi,x,y)=\tilde{d}_{x} \varphi(x,y,\xi)
      \]
      is, on the support of $c_0$,
      an $\SG$ diffeomorphism with $\SG^0$ parameter dependence. For the sake of completeness,
      we recall the proof of this result. First observe
      \begin{align*}
      \tilde{d}_{x} \varphi(x,y,\xi)
                    & = \varphi^\prime_x (y,\xi)+w(x,y,\xi),
      \\
       w(x,y,\xi)   
                    & = \int_{0}^{1} \int _{0}^{1}
                          (x-y)\cdot H(y+t_{1}t_{2}(x-y),\xi) dt_{1} dt_{2}, 
      \\
        H(x,\xi)
        	            & = \varphi^{\prime\prime}_{xx}(x,\xi),
      \end{align*}
      \begin{equation}
      \label{eq:3.46}
      \begin{aligned}
      \Rightarrow
        d _ \xi
        \tilde{d}_{x} \varphi(x,y;\xi)
                    &= \varphi^{\prime\prime}_{x\xi} (y,\xi)
      \\
      &+ \int_{0}^{1} \int _{0}^{1}
                           \; t_{1}
                          (x-y)\cdot
                           H^\prime_\xi
                           (y+t_{1}t_{2}(x-y),\xi) \,dt_1dt_2.
      \end{aligned}
      \end{equation}
      Provided $\varepsilon\in(0,1)$ is small enough, the integrand in \eqref{eq:3.46} can be estimated 
      on $\supp c_0$ as follows:
      \begin{eqnarray*}
      & & 
      \sum_{k=1}^d(x_k - y_k) \, \partial_{\xi _l}
                     \varphi^{\prime\prime}_{x_j x_{k}}(y+t_{1}t_{2}(x-y),\xi) 
      \\
      & &
      \lesssim
      |x-y| \sup_{t \in [0,1]} 
            \norm{y+t(x-y)}^{-1}
      \\
      & &
      \lesssim
      \varepsilon \norm{x} \norm{y}^{-1} \lesssim \varepsilon,
      \end{eqnarray*}
      so that the Jacobian of $\tilde{d}_{x}\varphi(x,y,\xi)$ is a 
      small perturbation of the one of $\varphi '_x (y,\xi)$. Then, possibly
      taking a smaller value of $\varepsilon$ and recalling $\varphi \in \Phr$,
      on $\supp c_0$ we can assume
      \[
      \left|
        \det
        d_\xi 
        \tilde{d}_{x} \varphi(x,y,\xi)
      \right| \ge \frac{\kappa}{2} > 0.
      \]
      Moreover, it is easy to see that, on $\supp c_0$, the components
      of $\tilde{d}_{x} \varphi(x,y,\xi)$ satisfy $\SG^{0,0,1}_{1,1,1}$ estimates, since
      \begin{equation}
      \label{eq:3.48}
      	  \begin{aligned}
         \partial^{\alpha}_x \partial^{\beta}_y \partial_{\xi}^\gamma 
          &\tilde{d}_{x} \varphi(x,y,\xi) \lesssim
          \\
          &\lesssim  \norm{y}^{-|\alpha+\beta|}\norm{\xi}^{1-|\gamma|}
            =  \norm{x}^{-|\alpha|}
            \norm{y}^{-|\beta|}
            \norm{\xi}^{1-|\gamma|} .
            \end{aligned}
      \end{equation}
      We now prove that, on $\supp c_0$,
      \[
      \norm{\tilde{d}_{x} 
      \varphi(x,y,\xi)}  \asymp \norm{\xi}.
      \]
      In fact, the upper bound is immediate, and we also have
      \begin{align*}
      |w(x,y,\xi)| & \le |x-y|\cdot\sup_{t\in[0,1]}\|H(y+t(x-y),\xi)\|\lesssim\varepsilon\norm{x}\norm{y}^{-1}\norm{\xi}
      \\
      &\lesssim \varepsilon\norm{\xi}\lesssim\varepsilon\norm{\varphi^\prime_x(y,\xi)}
      \\
         \norm{\tilde{d}_{x} \varphi(x,y,\xi)} & = \norm{\varphi^\prime_x(y,\xi)+w(x,y,\xi)}
          \asymp\norm{\varphi^\prime_x(y,\xi)} \asymp\norm{\xi}.
      \end{align*}
      Then, with a suitable choice of $\varepsilon\in(0,1)$,
      $\tilde{d}_{x} \varphi(x,y;\xi)$ satisfies all the requirements 
      of Definition \ref{def:sgdiffeo}, and, on $\supp c_0$, 
      $\tilde{d}_{x} \varphi(x,y,\xi)$ is an $\SG$ diffeomorphism with
      $\SG^{0}$ parameter dependence. With this in mind, we can rewrite
      $C_{0}u(x)$ as
      \begin{eqnarray*}
         C_{0}u(x) & = & (2\pi)^{-d}\int 
                              e^{i( \varphi(x,\xi) - \varphi(y,\xi) )} \,
                              c_{0}(x,y,\xi) \, u(y)\,d \xi dy
      \\
      & = & (2\pi)^{-d}\int 
                 e^{i \scal{x-y}{\tilde{d}_{x} \varphi(x,y,\eta)}} \,
                 c_{0}(x,y,\eta) \, u(y)\,d \eta dy.
      \end{eqnarray*}
      By the above arguments, it follows that we can make the substitution
      \[
      	\xi  = \tilde{d}_{x} \varphi(x,y;\eta)
      	\Leftrightarrow
      	\eta = (\tilde{d}_{x} \varphi)^{-1}(x,y;\xi),
      \]
      so that we can conclude
      \begin{eqnarray*}
      C_{0}u(x) & = & (2\pi)^{-d}\int e^{i \scal{x-y}{\xi}}
                      \, c_{0}(x,y, (\tilde{d}_{x} \varphi)^{-1}(x,y,\xi))\cdot 
      \\
                &   & \cdot \left|
                         \det d_\xi
                              (\tilde{d}_{x} \varphi)^{-1}(x,y,\xi)
                      \right| 
                      u(y)\,dy d \xi
      \\
                & = & (2\pi)^{-d} \int e^{i \scal{x-y}{\xi}}
                      \, (S_\varphi c_0)(x,y,\xi) \, u(y)\,dy d \xi,
      \end{eqnarray*}
      where, by the definition of $S_\fy$,
      \begin{equation}\label{eq:3.42}
      	(S_\varphi c_0)(x,y,\xi)
	=
	c_{0}(x,y, (\tilde{d}_{x} \varphi)^{-1}(x,y,\xi))
       \cdot \left|
                         \det d_\xi
                              (\tilde{d}_{x} \varphi)^{-1}(x,y,\xi)
                      \right| 
      \end{equation}
\end{enumerate}
The pseudo-differential operator with amplitude $S_\varphi c_0$ can be rewritten as
an operator with symbol through the asymptotic expansion
\begin{equation}\label{eq:asympt}
c(x,\xi)\sim
\sum_{\alpha} \frac{i^{|\alpha|}}{\alpha!}\left.(D^\alpha_y D^\alpha_\xi (S_\varphi c_0))(x,y,\xi) \right|_{y=x}.
\end{equation}
That \eqref{eq:asympt} is indeed an expansion as in Definition \ref{def:gensgasexp}
and Proposition \ref{propasymp} is a consequence of the following observations:
\begin{itemize}
	\item[-] the second factor in \eqref{eq:3.42} is an amplitude in $\SG^{0,0,1}
	_{1,1,1}$, see \cite{coriasco};
	\item[-] as a weight on $\rr{3d}$, $\omega$ could happen not to be polynomially
	moderate, but it is still polynomially
	bounded, that is, $\omega(x,y,\xi)\lesssim\norm{x}^{m_1}\norm{y}^{m_2}
	\norm{\xi}^\mu$ on $\rr{3d}$, for suitable
	$m_1,m_2,\mu\ge0$, and the same clealry holds for $\omega(x,y,(\tilde{d}_{x} \varphi)^{-1}(x,y,\xi))$; 
	then, $\Op{S_\fy c_0}$ still gives rise to a pseudo-differential operator of $\SG$ type, see \cite{Co},
	and in the asymptotic expansion argument it is still possible to obtain remainders
	of arbitrary low order in at least one of the
	variables, giving rise to remainders of the type in Proposition \ref{propasymp};
	\item[-] the $\alpha$-derivatives of the first factor in \eqref{eq:3.42} with respect to $y$ and $\xi$, evaluated for
	$y=x$, contain only the derivatives of the $\SG$ diffeomorphism with $\SG^0$ parameter dependence
	 $\phi\colon(\xi,x)\mapsto(\varphi '_x)^{-1} (x,\xi)$ and the derivatives of $a$ and $b$ evaluated at the image
	 $(x,(\varphi '_x)^{-1} (x,\xi))$; then, in view of the properties of the $\SG$ diffeomorphism $\phi$ and
	 the $(\phi,1)$-invariance of $\omega_1$ and $\omega_2$, Lemma \ref{lemma:omegainv} implies
	 that $\tilde{\omega}$ is again a 
	 polynomially moderate weight, and ``the order of the terms decreases'' (at least with respect to the covariable).
\end{itemize}
By Proposition \ref{propasymp} and the results in \cite{CoTo},
we then find $c \in \SG^{(\widetilde{\omega})}_{\tilde{r},1}$, with $\widetilde{\omega}$ and $\tilde{r}$
as stated, satisfying $\op(S_\varphi c_0)=\op(c)$ modulo remainders. The proof is complete.
\end{proof}

\par

For the next result it is convenient to modify the operator $S_\fy$ in \eqref{SvarphiDef}
such that it fulfills the formulas
\begin{equation}\label{SvarphiDef2}
\begin{gathered}
(S_\fy f)(x,\xi ,\eta ) = f(\Phi (x,y,\xi ),\xi,\eta )\cdot \left | \det \Phi '_x (x,\xi ,\eta )\right |
\\[1ex]
\text{where}\quad
\int _0^1 \fy _\xi '(\Phi (x,\xi ,\eta ), \eta +t(\xi -\eta ),)\, dt =x .
\end{gathered}
\end{equation}

\par

\begin{thm}
\label{thm:3.5}
Let $\rho _j\in [0,1]$, $\varphi \in \Phr$ and let $\omega _j
\in \mascP_{1,\rho _j}(\rr {2d})$, $j=0,1,2$, be such that $\omega _1$ and
$\omega _2$ are $(\phi,1)$-invariant
with respect to $\phi \colon
x \mapsto (\varphi ^\prime _\xi )^{-1}(x,\xi)$,
$$
\quad \rho _0=\min\{ \rho _1,\rho _2,1\} \quad \text{and}
\quad \omega _0(x,\xi ) =\omega_1(\phi (x,\xi ),\xi )\omega_{2}(\phi (x,\xi ),\xi ),
$$
Also let $a \in \SG^{(\omega _1)} _{1,\rho _1}(\rr {2d})$ and
$b \in \SG^{(\omega _2)}_{1,\rho _2}(\rr {2d})$.
Then
\begin{equation*}
\op _\varphi ^*(b) \circ \op _\varphi (a) = \op (c),
\end{equation*}
for some $c \in \SG^{(\omega _0)}_{1,\rho _0}(\rr {2d})$.
Furthermore, if $\ep \in (0,1)$, $\chi \in \Xi ^\Delta (\ep )$,
$c_0(x,\xi ,\eta )= a(x,\xi )b(x,\eta )\chi (\xi ,\eta )$ and $S_\fy$ is given by
\eqref{SvarphiDef2}, then $c$ admits the asymptotic expansion
\begin{equation}
\label{eq:3.38B}
c(x,\xi) \sim \sum _{\alpha} \frac{i^{|\alpha |}}{\alpha !} 
(D^\alpha _xD^\alpha_\eta (S_\fy c_0))(x,\xi ,\eta ) \big |_{\eta =\xi}.
\end{equation}
\end{thm}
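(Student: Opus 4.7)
The plan is to mirror the strategy of Theorem \ref{thm:3.4}, with the roles of the spatial variable and the covariable interchanged, exploiting the regularity of $\varphi$ to produce an $\SG$-diffeomorphism on the covariable side. For $u \in \cS(\rr d)$, I first write
\begin{equation*}
(\op_\varphi^*(b) \circ \op_\varphi(a))u(x) = (2\pi)^{-3d/2}\iiint e^{if(x,y,\xi,\eta)}\,\overline{b(y,\xi)}\,a(y,\eta)\,\widehat{u}(\eta)\,dy\,d\xi\,d\eta,
\end{equation*}
with $f(x,y,\xi,\eta)=\langle x,\xi\rangle-\varphi(y,\xi)+\varphi(y,\eta)$. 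Applying the fundamental theorem of calculus in the covariable yields
\begin{equation*}
f(x,y,\xi,\eta) = \langle x,\eta\rangle + \langle \xi-\eta,\, x-\widetilde{d}_\xi\varphi(y,\xi,\eta)\rangle,\quad \widetilde{d}_\xi\varphi(y,\xi,\eta):=\int_0^1\varphi'_\xi(y,\eta+t(\xi-\eta))\,dt,
\end{equation*}
which is the covariable analog of the map $\widetilde{d}_x\varphi$ used in Theorem \ref{thm:3.4}. Picking $\chi\in\Xi^\Delta(\varepsilon)$ and splitting the amplitude into a near-diagonal part $q_0=\chi(\xi,\eta)\,\overline{b(y,\xi)}a(y,\eta)$ and an off-diagonal part $q_1=(1-\chi)\,\overline{b(y,\xi)}a(y,\eta)$ separates the two regimes.

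For the off-diagonal piece, on $\supp q_1$ one has $|\xi-\eta|\gtrsim\norm{\xi}+\norm{\eta}$. Since $\varphi\in\Phr$, the map $\xi\mapsto\varphi'_y(y,\xi)$ is a global $\SG^0$-parameter $\SG$ diffeomorphism, hence $|\varphi'_y(y,\eta)-\varphi'_y(y,\xi)|\gtrsim|\xi-\eta|$, and repeated integration by parts via an operator of the $R_3$-type \eqref{eq:3.10.1} in $y$, combined with integration by parts in $\xi$ (or $\eta$) using an $R_2$-type operator \eqref{eq:defr2} anchored on the non-vanishing $x-\widetilde{d}_\xi\varphi(y,\xi,\eta)$, produces a kernel rapidly decaying in all variables, so this contribution lies in $\op(\cS(\rr{2d}))$. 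For the diagonal piece, the key technical fact, to be proved exactly as in Theorem \ref{thm:3.4}, is that on $\supp\chi$ the map $y\mapsto\widetilde{d}_\xi\varphi(y,\xi,\eta)$ is a small $\SG^{0,0,0}_{1,1,1}$-perturbation of $y\mapsto\varphi'_\xi(y,\eta)$; hence for $\varepsilon\in(0,1)$ small it is itself an $\SG$-diffeomorphism with $\SG^0$ parameter dependence in $(\xi,\eta)$, with inverse $\Phi$ satisfying \eqref{SvarphiDef2}. The change of variables $y=\Phi(z,\xi,\eta)$ reduces the phase to $\langle x,\eta\rangle+\langle \xi-\eta,x-z\rangle$, and after the substitution $\tau=\xi-\eta$ the remaining integral has the amplitude-form of a pseudo-differential operator in $(x,\eta)$ with amplitude
\begin{equation*}
A(x,z,\tau;\eta)=(S_\varphi c_0)(z,\tau+\eta,\eta),\qquad c_0(z,\xi,\eta)=a(z,\xi)b(z,\eta)\chi(\xi,\eta).
\end{equation*}

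The symbol $c(x,\eta)$ is then obtained by the standard amplitude-to-symbol reduction (Kuranishi-type Taylor expansion of $A$ along $z=x$, $\tau=0$), producing exactly \eqref{eq:3.38B}. To close the argument, one must verify that each term $\frac{i^{|\alpha|}}{\alpha!}D_x^\alpha D_\eta^\alpha(S_\varphi c_0)(x,\xi,\eta)\big|_{\eta=\xi}$ belongs to a generalized $\SG$ class with order decreasing in $|\alpha|$ (with respect to the covariable), and that the asymptotic series defines an element of $\SG^{(\omega_0)}_{1,\rho_0}(\rr{2d})$ by Proposition \ref{propasymp}. Here the $(\phi,1)$-invariance of $\omega_1,\omega_2$ with respect to $\phi\colon x\mapsto(\varphi'_\xi)^{-1}(x,\xi)$ together with Lemma \ref{lemma:omegainv} guarantees that the composition with $\Phi$ (evaluated at $\eta=\xi$) still yields moderate weights of the claimed form $\omega_0(x,\xi)=\omega_1(\phi(x,\xi),\xi)\,\omega_2(\phi(x,\xi),\xi)$. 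The main obstacle, as highlighted in the remarks preceding the statement, is that the three-variable weight governing the amplitude $S_\varphi c_0$ is only polynomially bounded but not necessarily moderate in the $\rr{3d}$-sense; this is exactly the situation handled by \cite{CoTo}, which provides the sharpened asymptotic expansion with remainders in the classes \eqref{eq:gensgasexp}, thereby delivering $c\in\SG^{(\omega_0)}_{1,\rho_0}$ with $\op_\varphi^*(b)\circ\op_\varphi(a)=\op(c)$ modulo smoothing.
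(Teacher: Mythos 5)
Your proposal is correct and follows essentially the route the paper itself indicates for this statement: the paper gives no separate argument for Theorem \ref{thm:3.5}, saying only that it follows from the proof of Theorem \ref{thm:3.4} (and the corresponding result in \cite{coriasco}) with the roles of variables and covariables interchanged, and this is precisely what you carry out --- the near/far splitting via $\chi(\xi,\eta)$, the parameter-dependent diffeomorphism $y\mapsto\widetilde{d}_\xi\varphi(y,\xi,\eta)$ inverted by $\Phi$ as in \eqref{SvarphiDef2}, and the reduction to Proposition \ref{propasymp} and the generalized expansions of \cite{CoTo} to handle the merely polynomially bounded three-variable weight. The one blemish is in the off-diagonal step, where the claim that $x-\widetilde{d}_\xi\varphi(y,\xi,\eta)$ is non-vanishing on $\supp(1-\chi)$ is unjustified (and unnecessary): the $y$-integration by parts based on $|\varphi'_x(y,\eta)-\varphi'_x(y,\xi)|\gtrsim|\xi-\eta|\gtrsim\norm{\xi}+\norm{\eta}$ already produces a rapidly decreasing kernel in $(\xi,\eta)$, exactly mirroring the treatment of $C_1$ in the proof of Theorem \ref{thm:3.4}.
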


\par

\subsection{Elliptic FIOs of generalized $\SG$ type and parametrices.
Egorov Theorem}
\label{sec:2.5}
The results about the parametrices of the subclass of
generalized ($\SG$) elliptic  Fourier integral operators
are achieved in the usual way, by means of the composition
theorems in Subsections \ref{subs:2.3} and \ref{subs:2.4}.
The same holds for the versions of the Egorov's theorem
adapted to the present situation. The additional conditions,
compared with the statements in \cite{coriasco}, concern
the invariance of the weights, so that the hypotheses of
the composition theorems above are fulfilled. Here we omit
the proofs.

\begin{defn}
\label{def:3.1}
A type I  or a type II $\SG$ FIO, $\op_{\varphi}(a)$ or $\op^*_{\varphi}(b)$, respectively,
is said ($\SG$) elliptic if $\varphi \in \Phr$ and the amplitude $a$, respectively $b$, is ($\SG$) elliptic. 
\end{defn}
\begin{lemma}
\label{lemma:3.15}
Let a type I $\SG$ FIO $\op_{\varphi}(a)$ be elliptic, with 
$a\in\SG^{(\omega)}_{1,1}(\rr{2d})$. Assume that $\omega$ is $\phi$-invariant, 
$\phi=(\phi_1,\phi_2)$, where $\phi_2$ and $\phi_1$ are the $\SG$ diffeomorphisms
appearing in Theorems \ref{thm:3.4} and \ref{thm:3.5}, respectively.
Then, the two pseudo-differential operators $\op_{\varphi}(a)\circ\op^*_{\varphi}(a)$ and $\op^*_{\varphi}(a)\circ\op_{\varphi}(a)$ are $\SG$ elliptic.
\end{lemma}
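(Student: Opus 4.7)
The plan is to apply Theorems~\ref{thm:3.4} and~\ref{thm:3.5} to reduce each of the two compositions to a pseudo-differential operator, then to show that the leading term of the corresponding asymptotic expansion satisfies the $\SG$-ellipticity lower bound while the sub-principal contributions lie in a strictly smaller weight class.

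For the first composition I would invoke Theorem~\ref{thm:3.4} with $b=a$ and $\omega_1=\omega_2=\omega$; the $(\phi_2,2)$-invariance of $\omega$ required there is part of the assumed $(\phi_1,\phi_2)$-invariance. The theorem yields $\op_\varphi(a)\circ\op^*_\varphi(a)=\op(c)$ with $c\in\SG^{(\omega_0)}_{1,1}(\rr{2d})$, where $\omega_0(x,\xi)=\omega(x,\phi_2(x,\xi))^{2}$, together with the expansion \eqref{eq:3.38} for $c_0(x,y,\xi)=a(x,\xi)\,\overline{a(y,\xi)}\,\chi(x,y)$. Since $\chi(x,x)=1$ and the defining relation in \eqref{SvarphiDef} forces $\Phi(x,x,\xi)=\phi_2(x,\xi)$, the $\alpha=0$ term evaluated at $y=x$ reduces to
\[
 c_{\mathrm{pr}}(x,\xi)=|a(x,\phi_2(x,\xi))|^{2}\,|\det\Phi'_\xi(x,x,\xi)|.
\]
The $\SG$-ellipticity of $a$ gives $|a(x,\eta)|\gtrsim\omega(x,\eta)$ for $|x|+|\eta|$ large, and because $\phi_2$ is an $\SG$ diffeomorphism with $\norm{\phi_2(x,\xi)}\asymp\norm{\xi}$, taking $|x|+|\xi|$ large forces $|x|+|\phi_2(x,\xi)|$ to be large. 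Together with the identity $|\det\Phi'_\xi(x,x,\xi)|=|\det\varphi''_{x\xi}(x,\phi_2(x,\xi))|^{-1}$ and the uniform lower bound coming from $\varphi\in\Phr$, this gives $|c_{\mathrm{pr}}(x,\xi)|\gtrsim\omega_0(x,\xi)$ outside a compact set.

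The terms with $|\alpha|\ge1$ in \eqref{eq:3.38}, by the standard derivative count in $\SG$-classes (each application of $D_y D_\xi$ gains a factor $\norm{y}^{-1}\norm{\xi}^{-1}$) combined with Lemma~\ref{lemma:omegainv} applied to the composition with $\Phi$, lie in $\SG^{(\omega_0\vartheta_{-1,-1})}_{1,1}(\rr{2d})$, so at infinity they are absorbed by $c_{\mathrm{pr}}$ and the $\SG$-ellipticity of $\op(c)$ follows. The composition $\op^*_\varphi(a)\circ\op_\varphi(a)$ is treated by the parallel application of Theorem~\ref{thm:3.5}, whose $(\phi_1,1)$-invariance hypothesis is again supplied by our assumption; the corresponding leading symbol read off from \eqref{eq:3.38B} with $\Phi$ given by \eqref{SvarphiDef2} is $|a(\phi_1(x,\xi),\xi)|^{2}\,|\det\Phi'_x(x,\xi,\xi)|$, and is bounded below by $\omega_0(x,\xi)=\omega(\phi_1(x,\xi),\xi)^{2}$ by the same argument. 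The main obstacle is not the identification of the principal symbol, which is essentially algebraic once the composition formulas are in place, but the bookkeeping of the weight losses in the generalized $\SG$ asymptotic expansion of Proposition~\ref{propasymp}: one has to verify that, under repeated chain-rule differentiation through the diffeomorphism $\Phi$, the $(\phi_1,\phi_2)$-invariance of $\omega$ (together with Remark~\ref{rem:3.2}) is precisely what guarantees that every remainder sits in a class $\SG^{(\omega_0\vartheta_{-k,-k})}_{1,1}$ with $k\ge1$, so that the leading symbol truly dominates at infinity and the $\SG$-ellipticity is preserved.
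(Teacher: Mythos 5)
Your argument is correct and follows precisely the route the paper itself indicates: the paper omits the proof of Lemma \ref{lemma:3.15}, stating only that it is obtained ``in the usual way, by means of the composition theorems'' of Subsections \ref{subs:2.3} and \ref{subs:2.4}, which is exactly what you carry out. Your identification of the $\alpha=0$ term of \eqref{eq:3.38} (resp. \eqref{eq:3.38B}) as $|a(x,\phi_2(x,\xi))|^2\,|\det\varphi''_{x\xi}(x,\phi_2(x,\xi))|^{-1}$, bounded below by $\omega_0$ thanks to the ellipticity of $a$, the bound $\norm{\phi_2(x,\xi)}\asymp\norm{\xi}$ and $\varphi\in\Phr$, together with the observation that the $|\alpha|\ge 1$ terms lie in $\SG^{(\omega_0\vartheta_{-1,-1})}_{1,1}(\rr{2d})$, is the standard and intended completion of the argument.
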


\begin{thm}
\label{thm:3.8}
Let $\varphi\in\Phr$, $a\in\SG^{(\omega)}_{1,1}(\rr{2d})$, with $a$ $\SG$ elliptic.
Assume that $\omega$ is $\phi$ invariant, 
$\phi=(\phi_1,\phi_2)$, where $\phi_2$ and $\phi_1$ are the $\SG$ diffeomorphisms
appearing in the Theorems \ref{thm:3.4} and \ref{thm:3.5}, respectively.
Then, the elliptic $\SG$ FIOs $\op_{\varphi}(a)$ and $\op^*_{\varphi}(a)$ admit a parametrix.
These are elliptic $\SG$ FIOs of type II and type I, respectively.
\end{thm}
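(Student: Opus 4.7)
The plan is to build the parametrix for $A=\op_\varphi(a)$ by combining Lemma \ref{lemma:3.15} with the classical pseudo-differential parametrix construction, and then transport the result to $A^*$ by taking formal adjoints.

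First I would apply Lemma \ref{lemma:3.15}: under the stated invariance of $\omega$, both $AA^*=\op_\varphi(a)\circ\op_\varphi^*(a)$ and $A^*A=\op_\varphi^*(a)\circ\op_\varphi(a)$ are $\SG$-elliptic pseudo-differential operators with symbols in classes $\SG^{(\omega_1)}_{r_0,1}$, where $\omega_1$ is built from $\omega$ by the formulas of Theorems \ref{thm:3.4} and \ref{thm:3.5}. Since elliptic $\SG$ pseudo-differential operators in the generalized calculus of Subsection \ref{subs:1.3} admit two-sided pseudo-differential parametrices modulo $\op(\mathscr S)$ (obtained in the standard way by inverting the principal symbol and correcting by the usual Neumann-type asymptotic expansion, which is legitimate by Proposition \ref{propasymp}), we obtain $Q_1$ and $Q_2$, both pseudo-differential with symbols in $\SG^{(1/\omega_1)}_{r_0,1}$, satisfying
\begin{equation*}
Q_1\circ (AA^*)\equiv I,\qquad (A^*A)\circ Q_2\equiv I,
\end{equation*}
modulo smoothing remainders.

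The candidate right parametrix of $A$ is then $B_R:=A^*\circ Q_2$, and the candidate left parametrix is $B_L:=Q_1\circ A^*$. By Theorem \ref{thm:3.3} the composition $Q_1\circ A^*$ is a type II $\SG$ FIO with phase $\varphi$, and by Theorem \ref{thm:3.2} (applied to $A^*\circ Q_2$) the operator $B_R$ is again a type II $\SG$ FIO with phase $\varphi$; the required invariance of the weights for these compositions is exactly what is assumed on $\omega$ through $\phi=(\phi_1,\phi_2)$. A direct check gives $A\circ B_R=(AA^*)\circ Q_2\equiv I$ and $B_L\circ A=Q_1\circ(A^*A)\equiv I$ modulo smoothing, and the usual one-line argument $B_L\equiv B_L(AB_R)=(B_LA)B_R\equiv B_R$ shows that $B_L$ and $B_R$ coincide modulo $\op(\mathscr S)$, so either one provides a two-sided parametrix $P$. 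Ellipticity of $P$ follows from reading the leading term of the asymptotic expansion in Theorem \ref{thm:3.3} (or \ref{thm:3.2}): the principal part of the amplitude of $P$ is $q_2\cdot \overline{a}^*$ (up to the factors produced by $\Theta_{j,\varphi}$), and since $q_2$ is elliptic with reciprocal behavior to $a\overline{a}$ and $a$ is elliptic, the resulting amplitude satisfies the required lower bound at infinity, i.e., it is an elliptic element of $\SG^{(1/\omega)}_{1,1}$.

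For the type II operator $A^*=\op_\varphi^*(a)$, the parametrix is obtained by taking the formal $L^2$-adjoint of $P$: from $AP\equiv I\equiv PA$ we get $P^*A^*\equiv I\equiv A^*P^*$, and by Remark \ref{rem:sgsymm} $P^*$ is a type I $\SG$ FIO with the same phase $\varphi$ and amplitude equal to the complex conjugate of that of $P$, hence still elliptic. The only point that requires care, and which I expect to be the main obstacle, is the bookkeeping of the invariance hypotheses: one has to check that the weight $1/\omega_1$ attached to $Q_1, Q_2$ lies in the appropriate $\mathscr P_{r,1}^{\phi,j}$ class so that Theorems \ref{thm:3.2} and \ref{thm:3.3} apply, which is ensured precisely by the assumed $\phi=(\phi_1,\phi_2)$-invariance of $\omega$ together with Lemma \ref{lemma:omegainv} and the identifications of $\phi_1,\phi_2$ with the diffeomorphisms produced by the phase $\varphi\in\Phr$ (Proposition \ref{prop:3.2}).
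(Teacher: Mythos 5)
Your proposal follows essentially the route the paper itself indicates (the paper omits the proof, describing it as ``the usual way, by means of the composition theorems'': Lemma \ref{lemma:3.15} makes $AA^*$ and $A^*A$ elliptic pseudo-differential operators, one inverts these via Proposition \ref{propasymp}, and Theorems \ref{thm:3.2} and \ref{thm:3.3} identify $A^*\circ Q$ and $Q\circ A^*$ as type II FIOs), so the approach is the intended one. One small bookkeeping slip: as written, $Q_2$ is a parametrix of $A^*A$ yet you later compute $A\circ B_R=(AA^*)\circ Q_2\equiv I$, which needs $Q_2$ to be a parametrix of $AA^*$ --- the roles of $Q_1$ and $Q_2$ should be swapped (or one simply notes that parametrices of elliptic pseudo-differential operators are two-sided, so either choice works after relabeling).
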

%\begin{proof}
%The proof is standard. For the sake of completeness, we repeat here the argument in the case of $\SG$
%FIOs of type I. Let us denote $\op_{\varphi}(a)$ by $A$, the parametrix of $P = A \circ A^*$ by $P^{-1}$,
%and the parametrix of $Q = A^* \circ A$ by $Q^{-1}$. Both $P^{-1}$ and $Q^{-1}$ exist, owing to Lemma 
%\ref{lemma:3.15}. We have
%%
%\begin{eqnarray*}
%P\circ P^{-1} = I - K_{1}, & & P^{-1}\circ P = I - K_{2},
%\\
%Q\circ Q^{-1} = I - K_{3}, & & Q^{-1}\circ Q = I - K_{4},
%\end{eqnarray*}
%%
%with $K_{1},K_{2},K_{3},K_{4}$ smoothing operators. Let us set $F_{l}= Q^{-1} \circ A^*$ and 
%$F_{r}= A^*\circ P^{-1}$. We then have
%%
%\begin{eqnarray*}
%F_{l}\circ A & = & (A^*\circ A)^{-1} \circ (A^*\circ A) = I - K_{4},
%\\
%A \circ F_{r} & = & (A \circ A^*) \circ (A \circ A^*)^{-1} = I - K_{1},
%\\
%F_{l} \circ A \circ F_{r} & = & (I - K_{4})\circ  F_{r} \Rightarrow 
%                F_{l} - F_{l} \circ R_{1} = F_{r} - K_{4} \circ F_{r} 
%\\
%&  \Leftrightarrow & F_{l} = F_{r} \mod\mbox{ smoothing operators},
%\end{eqnarray*}
%%
%so that $F_{r}$ or $F_{l}$ can be chosen as parametrices of $A$. With 
%similar arguments it is possible to find a parametrix for $A^*$, 
%namely setting $G_{r} = A \circ Q^{-1}$ and $G_{l} = P^{-1} \circ A$.
%The second part of the statement follows from the composition Theorems 
%\ref{thm:0.1}, \ref{thm:3.1}, \ref{thm:3.2} and \ref{thm:3.3}.
%\end{proof}
%

\noindent
As usual, in the next two results we need
the canonical transformation $\phi\colon(x,\xi)\mapsto(y,\eta)$ generated by
the phase function $\varphi$, namely
\begin{equation}\label{eq:phi}
	\begin{cases}
		\xi=\varphi^\prime_x(x,\eta)
		\\
		y=\varphi^\prime_\xi(x,\eta).
	\end{cases}
\end{equation}

\begin{thm}
\label{thm:3.21}
Let $A = \op_{\varphi}(a)$ be an $\SG$ FIO of type I with
$a \in \SG^{(\omega_0)}_{1,1}(\rr{2d})$ and $P = \Op{p}$  a pseudo-differential operator with
$p \in \SG^{(\omega)}_{1,1}(\rr{2d})$. Assume that $\omega$ is $\phi$-invariant,
where $\phi$ is the canonical transformation \eqref{eq:phi}, associated with $\varphi$. Assume also that $\omega_0$ is $(\tilde{\phi},2)$-invariant, where $\tilde{\phi}\colon\xi\mapsto(\varphi '_{x})^{-1}(x,\xi)$.
Then, setting $\eta = (\varphi '_{x})^{-1}(x,\xi)$ we have
\begin{equation}
\label{eq:3.72.3bis}
\begin{aligned}
\Sym{A\circ P\circ A^{*}}(x,\xi) &= p( \varphi^\prime_{\xi}(x, \eta), \eta)\,|a(x,\eta)|^2\,
|\det \varphi^{\prime\prime}_{x\xi}(x,\eta)|^{-1}
\\
& \mod \SG^{(\widetilde{\omega}\cdot\vartheta_{-1,-1})}_{1,1}(\rr{2d}),
\end{aligned}
\end{equation}
which is an element of $\SG^{(\widetilde{\omega})}_{1,1}(\rr{2d})$ with 
\[
	\widetilde{\omega}(x,\xi)=\omega(\phi(x,\xi))\cdot
	\omega_0(x,(\varphi '_{x})^{-1}(x,\xi))^2.
\]
\end{thm}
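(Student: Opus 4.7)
The plan is to compute $A\circ P\circ A^*$ as $(A\circ P)\circ A^*$, applying the composition results established earlier in this subsection in two successive steps. In the first step I would apply Theorem~\ref{thm:3.1} to $A\circ P$, obtaining a type~I FIO $\op _{\varphi}(c_1)$ whose amplitude $c_1$ belongs to $\SG^{(\omega_0\cdot\Theta_{1,\varphi}\omega)}_{1,1}(\rr{2d})$. From the $\alpha=0$ term of the asymptotic expansion for ${}^t c_1$, the principal part of $c_1$ satisfies
\[
c_1(x,\xi)\equiv a(x,\xi)\,p(\varphi^\prime_\xi(x,\xi),\xi)\mod \SG^{(\omega_0\cdot\Theta_{1,\varphi}\omega\cdot\vartheta_{-1,-1})}_{1,1}(\rr{2d}).
\]
The $\phi$-invariance of $\omega$ yields, in particular, the $(\phi_1,1)$-invariance required by Theorem~\ref{thm:3.1}, with $\phi_1\colon x\mapsto\varphi^\prime_\xi(x,\xi)$, and Lemma~\ref{lemma:omegainv} together with Remark~\ref{rem:3.2} guarantees $\omega_0\cdot\Theta_{1,\varphi}\omega\in\mascP_{1,1}(\rr{2d})$.

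In the second step I would apply Theorem~\ref{thm:3.4} to $\op _{\varphi}(c_1)\circ\op _{\varphi}^*(a)$. The $(\tilde\phi,2)$-invariance of the two amplitudes' weights, where $\tilde\phi\colon\xi\mapsto(\varphi^\prime_x)^{-1}(x,\xi)$, follows from the assumed $(\tilde\phi,2)$-invariance of $\omega_0$ and from the $\phi$-invariance of $\omega$, since $\tilde\phi$ is the second component of the canonical transformation~\eqref{eq:phi}. Theorem~\ref{thm:3.4} then produces a pseudo-differential operator $\op(c)$ with $c\in\SG^{(\widetilde\omega)}_{1,1}(\rr{2d})$, whose symbol admits the asymptotic expansion~\eqref{eq:3.38} with amplitude $c_0(x,y,\xi)=c_1(x,\xi)\,\overline{a(y,\xi)}\,\chi(x,y)$ for a suitable $\chi\in\Xi^\Delta(\varepsilon)$.

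The principal symbol is the $\alpha=0$ term $(S_\varphi c_0)(x,x,\xi)$, which I would compute explicitly from~\eqref{SvarphiDef}. Setting $y=x$ collapses the defining relation for $\Phi$ to $\varphi^\prime_x(x,\Phi(x,x,\xi))=\xi$, whence $\Phi(x,x,\xi)=(\varphi^\prime_x)^{-1}(x,\xi)=\eta$. Differentiating this identity and invoking the inverse function theorem give $\Phi^\prime_\xi(x,x,\xi)=\bigl(\varphi^{\prime\prime}_{x\xi}(x,\eta)\bigr)^{-1}$, hence $|\det\Phi^\prime_\xi(x,x,\xi)|=|\det\varphi^{\prime\prime}_{x\xi}(x,\eta)|^{-1}$. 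Since $\chi(x,x)=1$, substitution of the principal part of $c_1$ computed in the first step yields
\[
(S_\varphi c_0)(x,x,\xi)=|a(x,\eta)|^2\,p(\varphi^\prime_\xi(x,\eta),\eta)\,|\det\varphi^{\prime\prime}_{x\xi}(x,\eta)|^{-1},
\]
which is exactly the right-hand side of~\eqref{eq:3.72.3bis}. The associated weight $\omega_0(x,\eta)^2\,\omega(\varphi^\prime_\xi(x,\eta),\eta)$ matches $\widetilde\omega(x,\xi)=\omega(\phi(x,\xi))\cdot\omega_0(x,(\varphi^\prime_x)^{-1}(x,\xi))^2$ as stated.

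The hardest part will be the bookkeeping of the remainders: one must verify that the $\vartheta_{-1,-1}$-error from the first step, after being transported through the second composition, together with the higher-order terms in~\eqref{eq:3.38}, yields a total remainder in $\SG^{(\widetilde\omega\cdot\vartheta_{-1,-1})}_{1,1}(\rr{2d})$. This will reduce to a careful but routine application of Proposition~\ref{propasymp} once the invariance hypotheses have been seen to propagate through each step, and once one verifies, via Lemma~\ref{lemma:omegainv}, that composition with the $\SG$ diffeomorphism implicit in~\eqref{SvarphiDef} preserves the relevant weight classes.
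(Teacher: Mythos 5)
Your proposal is correct and follows exactly the route the paper indicates: the proof of Theorem \ref{thm:3.21} is omitted there, but the text states that it is obtained from the composition theorems of Subsections \ref{subs:2.3} and \ref{subs:2.4}, with the weight-invariance hypotheses imposed precisely so that those theorems apply. Your two-step factorization $(A\circ P)\circ A^{*}$ via Theorem \ref{thm:3.1} followed by Theorem \ref{thm:3.4}, together with the evaluation of $(S_\varphi c_0)(x,x,\xi)$ at $y=x$ giving the Jacobian factor $|\det\varphi^{\prime\prime}_{x\xi}(x,\eta)|^{-1}$, is that argument.
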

\begin{thm}
\label{thm:3.21ell}
Let $A = \op_{\varphi}(a)$ be an elliptic $\SG$ FIO of type I with
$a \in \SG^{(\omega_0)}_{1,1}(\rr{2d})$ and $P = \Op{p}$  a pseudo-differential operator with
$p \in \SG^{(\omega)}_{1,1}(\rr{2d})$. Assume that $\omega$ is $\phi$-invariant,
where $\phi$ is the canonical transformation \eqref{eq:phi}, associated with 
$\varphi$. Then,  we have
\begin{equation}
\label{eq:3.72.3}
\Sym{A\circ P\circ A^{-1}}(x,\xi) = p( \phi(x,\xi)) \!\!\mod 
\SG^{(\widetilde{\omega}\cdot\vartheta_{-1,-1})}_{1,1}(\rr{d}),
\end{equation}
with $\widetilde{\omega}(x,\xi)=\omega(\phi(x,\xi))$.
\end{thm}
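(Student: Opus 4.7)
The plan is to obtain the statement as a direct consequence of Theorem \ref{thm:3.8} and the composition theorems of Subsections \ref{subs:2.3} and \ref{subs:2.4}, tracking only the principal symbols throughout. By Theorem \ref{thm:3.8}, a parametrix $A^{-1} = \op^*_\varphi(b)$ of type II exists modulo $\op(\mathscr{S})$. The identity $A \circ A^{-1} \equiv I \mod \op(\mathscr{S})$, combined with Theorem \ref{thm:3.4} and the principal-symbol computation carried out inside the proof of Theorem \ref{thm:3.21}, fixes
\[
a(x,\eta)\,\overline{b(x,\eta)}\,|\det\varphi^{\prime\prime}_{x\xi}(x,\eta)|^{-1} \equiv 1 \mod \SG^{(\vartheta_{-1,-1})}_{1,1},
\]
where $\eta=(\varphi^\prime_x)^{-1}(x,\xi)$ stands for the inverse in the second argument. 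This is the only use of ellipticity.

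Next I would compose $A \circ P$ using Theorem \ref{thm:3.1}: this produces a type I FIO $\op_\varphi(\tilde a)$ with principal symbol $\tilde a(x,\xi) \equiv a(x,\xi)\,p(\varphi^\prime_\xi(x,\xi),\xi)$, modulo an entry in a class of the form $\SG^{(\omega_0 \cdot (\Theta_{1,\varphi}\omega) \cdot \vartheta_{-1,-1})}_{1,1}$. The assumed $\phi$-invariance of $\omega$ contains in particular the $(\phi_1,1)$-invariance with respect to $x\mapsto\varphi^\prime_\xi(x,\xi)$ that is required by Theorem \ref{thm:3.1}, and Lemma \ref{lemma:omegainv} ensures that the composite weight lies in $\mascP_{1,1}(\rr{2d})$. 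I would then compose with $A^{-1}$ via Theorem \ref{thm:3.4}: the leading ($\alpha=0$) term of the resulting pseudo-differential symbol $c$, namely $S_\varphi c_0(x,x,\xi)$ as computed in the proof of Theorem \ref{thm:3.21}, equals
\[
\tilde a(x,\eta)\,\overline{b(x,\eta)}\,|\det\varphi^{\prime\prime}_{x\xi}(x,\eta)|^{-1}.
\]
Substituting the first step now collapses the factor $a\,\bar b\,|\det\varphi^{\prime\prime}_{x\xi}|^{-1}$ to $1$ modulo $\vartheta_{-1,-1}$, leaving the principal symbol
\[
p(\varphi^\prime_\xi(x,\eta),\eta) = p(\phi(x,\xi)),
\]
the final equality being the definition \eqref{eq:phi} of the canonical transformation.

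Finally I would verify that every term of order $|\alpha|\ge 1$ in the three asymptotic expansions involved carries at least one additional factor $\vartheta_{-1,-1}$, matching the residual class claimed in \eqref{eq:3.72.3}, and that the ambient weight is $\widetilde\omega=\omega\circ\phi\in\mascP_{1,1}(\rr{2d})$ by Lemma \ref{lemma:omegainv} together with the $\phi$-invariance hypothesis. The main obstacle is the bookkeeping: one must follow the interplay between the $\omega_0$-factors coming from $\tilde a$ and the $\omega_0^{-1}$-factors coming from $b$, and check that they cancel at every stage modulo $\vartheta_{-1,-1}$. This cancellation is precisely what distinguishes \eqref{eq:3.72.3} from \eqref{eq:3.72.3bis}, by removing the factor $|a(x,\eta)|^2\,|\det\varphi^{\prime\prime}_{x\xi}(x,\eta)|^{-1}$ appearing there and replacing it with $1$.
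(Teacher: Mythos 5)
Your proposal is correct and follows exactly the route the paper indicates (the proof is omitted in the text, which states only that the Egorov-type results ``are achieved in the usual way, by means of the composition theorems''): parametrix from Theorem \ref{thm:3.8}, the normalization $a\,\overline{b}\,|\det\varphi''_{x\xi}|^{-1}\equiv 1$ from $A\circ A^{-1}\equiv I$ via Theorem \ref{thm:3.4}, then Theorem \ref{thm:3.1} followed by Theorem \ref{thm:3.4} for $(A\circ P)\circ A^{-1}$, with the $\vartheta_{-1,-1}$ gain per order in each expansion. The only caveat is inherited from the paper's own statement rather than from your argument: invoking Theorem \ref{thm:3.8} tacitly requires the invariance of $\omega_0$ under the maps $\phi_1,\phi_2$, which Theorem \ref{thm:3.21ell} does not list explicitly.
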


\par

%%%%%%%%%%%%%%%%%%%%%%%%%%%%%%%%%%
\section{$L^2(\rr{d})$-continuity of regular generalized $\SG$
FIOs with uniformly bounded amplitude}\label{sec3}
%%%%%%%%%%%%%%%%%%%%%%%%%%%%%%%%%%

\par

In this section we prove a $L^2(\rr{d})$-boundedness results
for the generalized $\SG$ FIOs with 
amplitude $a  \in \SG^{0,0}_{r,\rho}(\rr{2d})$, $r,\rho\ge0$,
and regular phase function.
More precisely, we have the following.

\par

\begin{thm}
\label{thm:3.6}
Let $A=\op_{\varphi}(a)$ be a type I $\SG$ Fourier integral operator
with $\varphi \in \Phr$ and $a  \in \SG^{0,0}_{r,\rho}(\rr{2d})$, $r,\rho\ge0$.
Then, $A \in \cL(L^{2}(\rr{d}))$.
\end{thm}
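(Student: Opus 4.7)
The plan is to prove the $L^2$-boundedness of $A$ by the classical $T^{*}T$ argument, combined with the composition Theorem \ref{thm:3.4}. First I would observe that, since $\SG^{0,0}_{r,\rho}(\rr{2d})\subseteq \SG^{0,0}_{r',\rho'}(\rr{2d})$ whenever $r'\le r$ and $\rho'\le\rho$ (larger decay indices give a smaller symbol class), we may assume without loss of generality that $r,\rho\in[0,1]$. By the $C^{*}$-identity $\|A\|_{\cL(L^2)}^2 = \|AA^*\|_{\cL(L^2)}$, it suffices to prove $L^2$-boundedness of $AA^{*}=\op_{\varphi}(a)\circ\op_{\varphi}(a)^{*}$, which by Remark \ref{rem:sgsymm} coincides with $\op_{\varphi}(a)\circ\op_{\varphi}^{*}(a)$.

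I would then apply Theorem \ref{thm:3.4} to this composition with the trivial weight choice $\omega_{1}=\omega_{2}\equiv 1$. The constant weight belongs to $\mascP_{r,\rho}(\rr{2d})$ for any $r,\rho \ge 0$, is trivially $(\phi,2)$-invariant with respect to any $\SG$ diffeomorphism $\phi$, and fulfils condition \eqref{WeightPhaseCond} automatically. The composition theorem thus yields $AA^{*}=\op(c)$ for some $c\in \SG^{0,0}_{r_{0},\rho_{0}}(\rr{2d})$ with $r_{0}=\min(r,1)$, $\rho_{0}=\min(\rho,1)$, and asymptotic expansion of the form \eqref{eq:3.38}. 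Since $c$ is uniformly bounded together with all its derivatives, $\op(c)$ is bounded on $L^{2}(\rr{d})$ by the $\SG$-calculus version of the Calder\'on--Vaillancourt theorem (see \cite{CJT2, Co}). Combining this with the $T^{*}T$ identity gives $A\in\cL(L^{2}(\rr{d}))$.

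The main technical point is verifying that Theorem \ref{thm:3.4} applies in the full range $\rho\in[0,1]$, even though its statement imposes $\rho_{2}=1$. However, this hypothesis is used only to control the moderateness of the weight under composition with the phase-generated $\SG$ diffeomorphism $\phi\colon \xi \mapsto (\varphi^{\prime}_{x})^{-1}(x,\xi)$ (cf. Lemma \ref{lemma:omegainv} and the relevant steps in the proof of Theorem \ref{thm:3.4}); the constant weight $\omega\equiv 1$ trivially satisfies all such invariance properties without any constraint on $\rho$, so the proof of Theorem \ref{thm:3.4} carries over essentially verbatim and yields the asserted symbol class for $c$. An alternative, more hands-on route would be a direct $TT^{*}$ kernel analysis: one computes
\[
K_{AA^{*}}(x,x')=(2\pi)^{-d}\int e^{i(\varphi(x,\xi)-\varphi(x',\xi))}a(x,\xi)\overline{a(x',\xi)}\,d\xi,
\]
splits into near- and far-diagonal regions via an element of $\Xi^{\Delta}(\varepsilon)$, uses the change of variables $\xi\mapsto\tilde{d}_{x}\varphi(x,x',\xi)$ (admissible by regularity of $\varphi$ and Proposition \ref{prop:3.2}) on the near-diagonal piece, and integrates by parts with respect to $R_{3}$ in \eqref{eq:3.10.1} on the far-diagonal piece to obtain rapid decay; a Cotlar--Stein summation after a dyadic decomposition in $\xi$ then closes the argument without invoking the composition theorem. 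Either route goes through, but the composition-theoretic route is the most economical given the machinery already developed in Section \ref{sec2}.
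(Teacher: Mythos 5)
Your main route has a genuine gap. Theorem \ref{thm:3.4} requires the amplitudes to lie in $\SG^{(\omega_1)}_{r_1,1}$ and $\SG^{(\omega_2)}_{r_2,1}$, i.e.\ with second regularity index equal to $1$; an amplitude $a\in\SG^{0,0}_{r,\rho}$ with $\rho<1$ does \emph{not} belong to $\SG^{0,0}_{r,1}$, and your reduction to $r,\rho\in[0,1]$ goes in the wrong direction for this purpose, since the indices can only be decreased, never increased. Moreover, the hypothesis is not used ``only to control the moderateness of the weight'': in the proof of Theorem \ref{thm:3.4} the decay $\langle\xi\rangle^{-|\beta|}$ of the $\xi$-derivatives of the amplitude is what makes the $\xi$-integrals converge after the $R_3$ integration by parts on the far-from-diagonal piece, and the positivity of $r+\rho$ is what makes \eqref{eq:3.38} a genuine asymptotic expansion in the sense of Proposition \ref{propasymp}, whose hypothesis $r+\rho>0$ fails precisely in the central case $a\in\SG^{0,0}_{0,0}$. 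That case is the whole point of Theorem \ref{thm:3.6} (the FIO analogue of Calder\'on--Vaillancourt): there all terms of \eqref{eq:3.38} have the same order, no symbol $c$ is produced, and the composition machinery of Section \ref{sec2} simply does not apply. Your final appeal to a ``Calder\'on--Vaillancourt theorem for $\SG$ classes'' with $r_0=\rho_0=0$ also presupposes a result of exactly the same nature as the one to be proved.

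Your alternative sketch is much closer to what the paper actually does, but as written it inherits the same difficulty: for $\rho=0$ the far-from-diagonal kernel
$\int e^{i(\varphi(x,\xi)-\varphi(x',\xi))}\bigl(({}^{t}R_3)^{l}q\bigr)(x,x',\xi)\,d\xi$, with $q=a(x,\xi)\overline{a(x',\xi)}(1-\chi(x,x'))$, gains decay only in $\langle x\rangle+\langle x'\rangle$ and not in $\xi$ (the worst term in \eqref{eq:3.10.2}, with $|\alpha|=l$, carries no negative power of $\langle\xi\rangle$ when the $\xi$-derivatives of the amplitude do not decay), so the $d\xi$ integral diverges. The paper resolves this by the Asada--Fujiwara scheme: a continuous unit-scale partition of unity $\psi_Z$, $Z=(z,\zeta)$, localizing in \emph{both} $x$ and $\xi$, so that each piece $A_Z=\op_\varphi(\psi_Z a)$ has compactly supported amplitude and all integrals converge trivially; integration by parts in $\xi$ then yields kernel bounds of the form $(1+|x-y|^2)^{-N}$ for $A_{Z_1}A_{Z_2}^*$, with orthogonality in the $\zeta$-variable coming for free from the supports, and Schur's lemma together with the continuous Cotlar lemma (Lemmas \ref{lemma:B.3.11} and \ref{lemma:B.3.12}) closes the argument. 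A dyadic decomposition in $\xi$ alone would not produce the required almost-orthogonality; the unit-scale localization in both sets of variables is essential.
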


\par

The proof of Theorem \ref{thm:3.6} is given as an adapted
version of a general $L^{2}$-boundedness result by Asada and Fujiwara
\cite{AsFu78}. The argument below is a slightly modified version of the one originally given
in \cite{coriasco} for the case $a\in\SG ^{0,0}_{1,1}$
(see also, e.g., \cite{BoBuRo} and \cite{RuSu}, and the references quoted therein).
We illustrate below the full argument, since Theorem \ref{thm:3.6} is a first relevant
mapping property for the class of 
generalized $\SG$ Fourier integral operators. Other mapping properties of this type,
including a continuity result between suitable weighted modulation spaces,
are given in \cite{CJT4}.

\par

We need some preparations for the proof and begin to recall
the classical Schur's lemma.

\par

\begin{lemma}
\label{lemma:B.3.11}
If $K \in C(\rr{d} \times \rr{d})$,
\begin{equation*}
\sup_{y} \int  |K(x,y)|\, dx \le M
\quad \text{and}\quad
\sup_{x} \int |K(x,y)|\, dy \le M,
\end{equation*}
then the integral operator on $L^2(\rr d)$ with kernel $K$
has norm less than or equal to $M$.
\end{lemma}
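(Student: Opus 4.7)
The plan is to give the standard Cauchy--Schwarz proof of Schur's test. Denote by $T$ the integral operator with kernel $K$, so that $(Tf)(x)=\int K(x,y)f(y)\,dy$ for $f\in L^2(\rr d)$, and aim to show $\|Tf\|_{L^2}\le M\|f\|_{L^2}$.

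First I would factor the kernel inside the integral defining $Tf(x)$ as $|K(x,y)|^{1/2}\cdot |K(x,y)|^{1/2}$ (with the phase absorbed into one factor) and apply the Cauchy--Schwarz inequality, obtaining the pointwise estimate
\begin{equation*}
|Tf(x)|^2\le \left(\int |K(x,y)|\,dy\right)\left(\int |K(x,y)|\,|f(y)|^2\,dy\right)\le M\int |K(x,y)|\,|f(y)|^2\,dy,
\end{equation*}
where the second inequality uses the row bound $\sup_x\int|K(x,y)|\,dy\le M$.

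Next I would integrate in $x$, apply Tonelli's theorem to interchange the order of integration (all integrands being non-negative), and use the column bound $\sup_y\int|K(x,y)|\,dx\le M$:
\begin{equation*}
\|Tf\|_{L^2}^2\le M\int|f(y)|^2\left(\int|K(x,y)|\,dx\right)dy\le M^2\|f\|_{L^2}^2.
\end{equation*}
Taking square roots yields the claimed bound $\|T\|_{\mathcal L(L^2)}\le M$.

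The argument involves no obstacle of substance; the only points requiring a line of care are the justification of the interchange of integrals (handled by Tonelli thanks to non-negativity) and the fact that the integrand in the Cauchy--Schwarz step is measurable, which follows from $K\in C(\rr d\times\rr d)$. The hypothesis $K\in C(\rr d\times \rr d)$ is used only to guarantee measurability and to make sense of $Tf(x)$ pointwise for $f\in L^2$; the proof extends verbatim to merely measurable $K$.
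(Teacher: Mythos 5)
Your proof is correct: the Cauchy--Schwarz factorization $|K|^{1/2}\cdot|K|^{1/2}$, followed by Tonelli and the two row/column bounds, is the standard argument for Schur's test, and your remarks on measurability and on the hypotheses are accurate (continuity of $K$ is indeed not needed beyond measurability). The paper states this lemma as a classical fact without giving any proof, so your argument is precisely the canonical one it implicitly invokes, and there is nothing further to compare.
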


\par

For the proof of Theorem \ref{thm:3.6} we also needs the following
version of Cotlar's lemma.

\par

\begin{lemma}
\label{lemma:B.3.12}
Let $x\mapsto T_x$ be a measurable function from $\rr n$ to the set of linear and
continuous operators on $L^2(\rr d)$, and let $h_j(x,y)$, $j=1,2$, be
positive functions on  $\rr{2n}$ such that
\begin{equation}
\label{eq:B.3.63}
\| T_xT_y^* \| \le h_1(x,y)^{2},
\quad
\| T_x^*T_y \| \le h_2(x,y)^{2}.
\end{equation}
If $h_1$ and $h_2$ statisfy
\begin{equation}
\label{eq:B.3.64}
\int h_1(x,y)) \, dx \le M
\quad \text{and}\quad
\int h_2(x,y) \, dx\le M,
\end{equation}
for some constant $M$, then
$$
\Big \Vert \int (T_xf)\, dx \Big \Vert _{L^2}\le M\nm f{L^2},\quad f\in L^2(\rr d).
$$
\end{lemma}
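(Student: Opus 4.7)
The plan is to prove this as the integral form of the Cotlar-Stein almost orthogonality lemma, via the standard $TT^*$ trick combined with taking large powers. First, by a truncation argument, it suffices to consider the case where $x\mapsto T_x$ is supported in a compact set $K\subset\rr n$, so that $S_K:=\int_K T_x\,dx$ is well-defined as a bounded operator on $L^2(\rr d)$; the bound for the general operator-valued integral then follows by a limiting argument $K\nearrow\rr n$, applied to the uniform estimate $\|S_K\|\le M$.

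For such $S_K$ and every positive integer $N$ one has the identity $\|S_K\|^{2N}=\|(S_K S_K^*)^N\|$. Expanding the right-hand side as an iterated integral and bringing the norm inside yields
\[
\|S_K\|^{2N}\le\int_{K^{2N}}\|T_{x_1}T_{x_2}^{*}T_{x_3}T_{x_4}^{*}\cdots T_{x_{2N-1}}T_{x_{2N}}^{*}\|\,dx_1\cdots dx_{2N}.
\]
The integrand will be estimated in two different ways. Bracketing as $(T_{x_1}T_{x_2}^{*})(T_{x_3}T_{x_4}^{*})\cdots(T_{x_{2N-1}}T_{x_{2N}}^{*})$ gives the bound $\prod_{j=1}^{N}h_1(x_{2j-1},x_{2j})^{2}$, while bracketing as $T_{x_1}(T_{x_2}^{*}T_{x_3})\cdots(T_{x_{2N-2}}^{*}T_{x_{2N-1}})T_{x_{2N}}^{*}$ gives $\|T_{x_1}\|\,\|T_{x_{2N}}\|\prod_{j=1}^{N-1}h_2(x_{2j},x_{2j+1})^{2}$. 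Taking the geometric mean of these two upper bounds produces an estimate in which each $h_1$ and each $h_2$ factor appears linearly, together with the half-powers $\|T_{x_1}\|^{1/2}\|T_{x_{2N}}\|^{1/2}$ of the endpoints.

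Substituting this geometric-mean bound and integrating iteratively over $x_2,x_3,\ldots,x_{2N-1}$ in alternation, each integration applies one of the assumptions in \eqref{eq:B.3.64} (using the symmetry $h_j(x,y)=h_j(y,x)$ inherited from $\|T_xT_y^{*}\|=\|T_yT_x^{*}\|$) and contributes a factor of $M$, for a total of $M^{2N-2}$ after $2N-2$ steps. The remaining integration over the endpoints $x_1$ and $x_{2N}$ pairs the factors $\|T_{x_j}\|^{1/2}$, bounded pointwise by $h_1(x_j,x_j)^{1/2}$ since $\|T_x\|^2=\|T_xT_x^{*}\|\le h_1(x,x)^2$, against $h_1(x_1,x_2)$ and $h_1(x_{2N-1},x_{2N})$; thanks to the truncation to $K$, this step is bounded by a constant $C_K$ independent of $N$. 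Hence $\|S_K\|^{2N}\le C_K\,M^{2N}$, and taking the $2N$-th root while letting $N\to\infty$ yields $\|S_K\|\le M$. Removing the truncation produces the claimed estimate $\bigl\|\int T_xf\,dx\bigr\|_{L^2}\le M\|f\|_{L^2}$.

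The main obstacle is the careful control of the endpoint factors $\|T_{x_1}\|^{1/2}\|T_{x_{2N}}\|^{1/2}$ arising from the geometric mean: they must be absorbed in a constant independent of $N$ so that the $2N$-th root washes it out in the limit. Introducing the truncation is precisely what guarantees this, and allows the almost orthogonality mechanism to yield the sharp constant $M$; a Schur-type estimate (cf.\ Lemma \ref{lemma:B.3.11}) could alternatively be invoked to organize the iterated $L^1$ integrations cleanly.
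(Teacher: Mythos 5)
The paper does not actually prove this statement: it is quoted as a known ``version of Cotlar's lemma'' (in the spirit of Asada--Fujiwara \cite{AsFu78}) and used as a black box in the proof of Theorem \ref{thm:3.6}, so there is no internal proof to compare against. Your argument is the standard Cotlar--Stein proof adapted to the continuous parameter, and it is essentially correct: the identity $\|S_K\|^{2N}=\|(S_KS_K^*)^N\|$, the two bracketings, the geometric mean, and the alternating integrations using \eqref{eq:B.3.63}--\eqref{eq:B.3.64} are all sound, and the truncation to $K$ correctly isolates an $N$-independent constant that the $2N$-th root removes. Two small technical points deserve a remark. First, for $S_K$ to be a well-defined bounded operator and for your endpoint constant $C_K$ to be finite you implicitly need $x\mapsto\|T_x\|$ to be locally integrable (the hypotheses only give $\|T_x\|\le h_1(x,x)$, which need not be locally bounded); this is a standing implicit assumption in the lemma as stated and is harmless in the application, where $\|A_Z\|$ is uniformly bounded, but it should be flagged. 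Second, the symmetrization of $h_j$ can be avoided altogether by integrating in the order $x_1,x_2,\dots,x_{2N}$, since then each $h_j$ is always integrated in its first argument, exactly as in \eqref{eq:B.3.64}; your symmetry observation $\|T_xT_y^*\|=\|T_yT_x^*\|$ is nevertheless a valid alternative route.
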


\par

\begin{proof}[Proof of Theorem \ref{thm:3.6}.]
Let $g \in C^\infty(\mathbf R)$ be decreasing and such that 
$g (t) = 1$ for $t < \frac{1}{2}$ and $g (t) = 0$ for $t > 1$,
and set $\chi (x)=g(|x|)$, $x\in \rr d$, and
\[
\psi _{Z}(x,\xi) = \frac{\chi (|x-z|) \chi (|\xi - \zeta |)}
                       {\nm \chi{L^1}^2},\qquad Z=(z,\zeta )\in \rr {2d}.
\]
Then
\begin{gather}
\label{eq:B.3.56}
\supp{\psi_Z} \subseteq U_Z \equiv
                          \sets
                             {(x,\xi) \in \rr{2d}}
                             {|x-z| \le 1,\  |\xi - \zeta | \le 1},
\\
\label{eq:B.3.57}
\max_{|\alpha+\beta| \le N} \;
\sup_{x,\xi \in \rr{d}} \;
\left |
\partial _{\xi}^\alpha \partial ^{\beta} _x \psi _{Z}(x,\xi)
\right | \le C_{N},
\\
\nonumber
\int  \psi _Z(x,\xi) \, dZ= 1,
\end{gather}
where the constants $C_{N}$ are independent of $Z$. For $Z$ fixed,
let
\begin{equation}
\label{eq:B.3.59}
a_{Z}(x,\xi) = \psi_{Z}(x,\xi) a(x,\xi),
\quad \text{and}\quad
A_{Z} = \op_{\varphi}(a_{Z}).
\end{equation}

\par

Now \eqref{eq:B.3.56}, \eqref{eq:B.3.57} and \eqref{eq:B.3.59} imply that
$A_{Z}$ is linear from $C_{0}^\infty(\rr{d})$ to itself, and
$\| A_{Z} f \|_{L^{2}}$ $\le C \| f \|_{L^{2}}$, where the constant $C$ 
is independent of $Z$. In fact, $a_{Z}$ has compact support and 
\eqref{eq:B.3.57} holds. Moreover,
\[
   \psi_{Z} \in C_{0}^\infty \subseteq  \SG^{0,0}_{\min\{r,1\},\min\{\rho,1\}}
\]
and
\[
  A f(x) = \lim_{N \to \infty}
                       \int _{|Z|\le N} 
                       A_{Z}f(x)\,dZ ,
\]
where the limit exists pointwise for all $x \in \rr{d}$ and with 
respect to the strong topology of $L^{2}$.

\par

The result follows if we prove that for all compact sets $K \subset \rr{2d}$
\begin{equation}
\label{eq:B.3.62}
\left \| \int _{K} A_{Z}f \,dZ \right \| _{L^{2}}
\le 
M \| f \|_{L^{2}},
\qquad
f \in C_{0}^\infty(\rr{d}),
\end{equation}
for some constant $M$ independent of $f$ and $K$. To this aim,
we shall prove that $A_Z$ obey the hypothesis in Lemma
\ref{lemma:B.3.12}.
%will use Cotlar's lemma, which,
%
%adapted to our operators $A_{w}$, can be stated in the following form.
%
%
%
%
%
%Here we cannot use Theorem \ref{thm:3.4},

\par

% but we observe that the 
For this reason we consider the kernel $K_{Z_1,Z_2}(x,y)$
of $A_{Z_1}A^*_{Z_2}$, which can be
written as
\begin{equation}
\label{eq:B.3.68}
K_{Z_1,Z_2}(x,y) = (2\pi)^{-d/2}\int e^{i ( \varphi (x,\xi) - \varphi(y,\xi) )} \,
                                        q_{Z_1,Z_2}(x,y,\xi)\,d \xi ,
\end{equation}
with
\[
q_{Z_1,Z_2}(x,y,\xi) = a_{Z_1}(x,\xi) \overline{a_{Z_2}(y,\xi)}\in \cS (\rr{3d})
\]
supported in
$$
\sets { (x,y,\xi)}
                                   {|x-z_1| \le 1,\ |y-z_2 | \le 1,\ 
                                   |\xi - \zeta _1| \le 1,\  
                                   |\xi - \zeta _2| \le 1 }.
$$
We shall prove that $K_{Z_1,Z_2}$ satisfies the
hypotheses of Lemma \ref{lemma:B.3.11} for a suitable $M$.

\par

Let $T$ be the operator
\[
T = H_\fy \cdot ( 1 - L ),
\]
where
\begin{align*}
L &=i \sum_{j=1}^n 
              \left(
                                 \varphi^\prime_{\xi_j}(x,\xi) - \varphi^\prime_{\xi_j}(y,\xi)
                              \right)  \partial_{\xi_j}
\intertext{and}
H_\fy (x,y,\xi ) &=  (1 + \left |
                                 \varphi^\prime_{\xi}(x,\xi) - \varphi^\prime_{\xi}(y,\xi)  
              \right|^{2})^{-1}.
\end{align*}
Then
\begin{eqnarray*}
&
T e^{i ( \varphi(x,\xi) - \varphi(y,\xi) )} =
   e^{i ( \varphi(x,\xi) - \varphi(y,\xi) )},
\end{eqnarray*}
and since
$$
\left|
       \varphi^\prime_\xi(x,\xi) - \varphi^\prime_\xi(y,\xi)    
\right| \gtrsim |x - y|,
$$
by the first part of the proof of Theorem 
\ref{thm:3.4}, we get
$$
H_\fy (x,y,\xi )\lesssim \norm{x-y}^{2}.
$$
Consequently, if $\Div$ is the map $F\mapsto H_\fy \cdot F$, then $L$ and $\Div$
are continuous on $\cS (\rr {3d})$. % Moreover,
% also, setting $\Div f = f/\mathfrak{D}$,
%
%\begin{eqnarray*}
%& \Div \; : \; \cS (\rr{d} \times \rr{d} \times \rr{d}) \to
%             \cS (\rr{d} \times \rr{d} \times \rr{d}),
%\\
%&  L \; : \; \cS (\rr{d} \times \rr{d} \times \rr{d}) \to
%             \cS (\rr{d} \times \rr{d} \times \rr{d}),
%\end{eqnarray*}
%
%\begin{equation*}
%\supp{q_{Z_1,Z_2}} \subseteq ,
%\end{equation*}
%%
%and
%$$
%q_{Z_1,Z_2} .
%$$
Since an analogous formula to \eqref{eq:2.2.1} holds
for $({^t T})^N$, by 
the hypotheses and the above observations we have, for arbitrary
$N\in \mathbf N$ and suitable differential operators $V_{N}(\Div  , L)$, 
depending on $\Div$, $L$ and $N$,  
\begin{multline*}
          K_{Z_1,Z_2}(x,y)=
           (2\pi)^{-d/2}\int  T^N \,
                             e^{i ( \varphi(x,\xi) - \varphi(y,\xi) )} \,
                             q_{Z_1,Z_2}(x,y,\xi)\,d \xi 
\\
= (2\pi)^{-d/2}\int  e^{i ( \varphi(x,\xi) - \varphi(y,\xi) )} \,
                             ({^t T})^N \,
                             q_{Z_1,Z_2}(x,y,\xi)\,d \xi
\\
= (2\pi)^{-d/2}\int e^{i ( \varphi(x,\xi) - \varphi(y,\xi) )} \,
                             (\Div ^N + V_{N}(\Div , L) ) \,
                             q_{Z_1,Z_2}(x,y,\xi)\,d \xi
\end{multline*}
Since each term appearing in $V_N(\Div , L)$ contains exactly $N$
operators with $\Div$,
by standard arguments we find
\begin{equation}
\label{eq:B.3.69}
K_{Z_1,Z_2}(x,y) \lesssim 
\tau \left( \frac{\zeta _1-\zeta _2}{2} \right) \,
\tau(x-z_1) \,
\tau(y-z_2) \,
(1 +|x-y|^{2})^{-N},
\end{equation}
where $\tau = \chi_{B_1(0)}$ is the characteristic function of the 
unit ball in $\rr{d}$. Then:
\begin{multline*}
\lefteqn{
\sup _{y} \int  |K_{Z_1,Z_2}(x,y)| \,dx
         }
\\
\lesssim \tau \left( \frac{\zeta _1-\zeta _2}{2} \right) \,
          \sup_{y \in B_1(z_2)} \, 
          \int _{B_1(0)}  (1 + |x + (z_1 - y)|^{2})^{-N} \,dx
\\
\lesssim \tau \left( \frac{\zeta _1-\zeta _2}{2} \right ) \,
          \sup_{y \in B_1(z_2)} \, 
          (1 + |z_1 - y|^{2})^{-N}
\\
\lesssim \tau \left( \frac{\zeta _1-\zeta _2}{2} \right) \,
          (1 + |z_1 - z_2 |^{2})^{-N} 
\end{multline*}
and analogously for $\sup_{x} \int  |K_{Z_1,Z_2}(x,y)|\,dy$, owing to the 
symmetry in the estimate \eqref{eq:B.3.69}. So, all requirements of Lemma
\ref{lemma:B.3.11} are satisfied and summing up, we have:
\begin{eqnarray*}
|\zeta _1-\zeta _2| \ge 2
& \Rightarrow & A_{Z_1} A^*_{Z_2} = 0
\\
|\zeta _1 - \zeta _2| \le 2
& \Rightarrow & \| A_{Z_1} A^*_{Z_2} \| \lesssim
               (1 + |z_1 - z_2|^{2})^{-N}.
\\
\end{eqnarray*}
An analogous estimate can be obtained for $A^*_{Z_1} A_{Z_2}$, in view of  the
symmetry in the role of variables and covariables in $\SG$ phases and amplitudes. Then, also the
requirements \eqref{eq:B.3.63} and \eqref{eq:B.3.64} of Lemma \ref{lemma:B.3.12}
are satisfied. This gives the result.
\end{proof}

%%%%%%%%%%%%%%%
% Reference list
%%%%%%%%%%%%%%%


\begin{thebibliography}{150}

\bibitem{AndPhD} 
G.D. Andrews,
{\it A Closed Class of $\SG$ Fourier Integral Operators with Applications}.
PhD thesis, Imperial College, London (2004).

\bibitem{AsFu78}
K.~Asada, D.~Fujiwara,
\emph{On Some Oscillatory Transformation in $L^2(\rr{n})$}.
Japan J. Math., \textbf{4} (1978), 229--361.

\bibitem{BaCo} U. Battisti, S. Coriasco, \emph{Wodzicki Residue for Operators on Manifolds with Cylindrical Ends}. {Ann. Global Anal. Geom.} \textbf{40}, 2  (2011), 223-249, DOI 10.1007/s10455-011-9255-3.

\bibitem{Berger}
M.~S. Berger,
\newblock \emph{Nonlinearity and Functional Analysis}.
\newblock Academic Press (1977).

\bibitem{BoBuRo} P. Boggiatto, E. Buzano, L. Rodino, \emph{Global
Hypoellipticity and Spectral Theory}.  Mathematical Research, 92,
Akademie Verlag, Berlin (1996).

\bibitem{Bn1} {J. M. Bony}, \emph{Caract{\'e}risations des
Op{\'e}rateurs Pseudo-Diff{\'e}rentiels.  \rm{In: S{\'e}\-mi\-nai\-re
sur les {\'E}quations aux D{\'e}riv{\'e}es Partielles}}, 1996--1997,
Exp. No. XXIII, S{\'e}min. {\'E}cole Polytech., Palaiseau (1997). %A1

\bibitem{Bn2} {J. M. Bony}, \emph{Sur l'In\'egalit\'e de
Fefferman-Phong. \rm{In: S\'eminaire sur les  \'Equations aux
D\'eriv\'ees Partielles}}, 1998--1999, Exp. No. III, S\'emin. \'Ecole
Polytech., Palaiseau (1999). %A1

\bibitem{BC} {J. M. Bony, J. Y. Chemin}, \emph{Espaces Functionnels
Associ\'es au Calcul de Weyl-H{\"o}rmander}. Bull. Soc. math. France
\textbf{122} (1994), 77--118. %A1

\bibitem{BoL} {J. M. Bony, N. Lerner}, \emph{Quantification
Asymptotique et Microlocalisations d'Ordre Su\-p\'e\-rieur I}.
Ann. Scient. \'Ec. Norm. Sup., \textbf{22} (1989), 377--433. %A1

\bibitem{Bo11}
M.~Borsero,
\newblock {\em Microlocal Analysis and Spectral Theory
of Elliptic Operators on Non-compact Manifolds.}
\newblock Tesi di Laurea Magistrale in Matematica, Universit{\'a}
di Torino (2011).

\bibitem{BuNi} {E. Buzano, N. Nicola}, \emph{Pseudo-differential
Operators and Schatten-von Neumann Classes. \rm{In: P. Boggiatto,
R. Ashino, M. W. Wong (eds),} Advances in Pseudo-Differential
Operators, Proceedings of the Fourth ISAAC Congress,} Operator
Theory: Advances and Applications, Birkh{\"a}user Verlag, Basel, %A1
(2004).

\bibitem{BuTo} E. Buzano, J. Toft, \emph{Schatten-von Neumann properties in
the Weyl calculus}. J. Funct. Anal. \textbf{259} (2010), 3080--3114. %A1

\bibitem{CaRo} 
M. Cappiello, L. Rodino, 
{\em $\SG$-pseudodifferential operators and Gelfand-Shilov spaces}.
{Rocky Mountain J. Math.}, \textbf{36}, 4 (2006), 1117--1148.

\bibitem{CNR07} E.~Cordero, F.~Nicola and L.~Rodino,
 {\it  Boundedness of Fourier Integral Operators 
    on $\mathcal{F} L^p$ spaces}.
    Trans. Amer. Math. Soc.  {\bf 361}  (2009), 6049--6071.    

\bibitem{CorNicRod1} E. Cordero, F. Nicola, L. Rodino, \emph{On the Global
Boundedness of Fourier Integral Operators}. Ann. Global Anal. Geom.
\textbf{38} (2010), 373--398.

\bibitem{Co} H. O. Cordes, \emph{The Technique of Pseudodifferential
Operators.} Cambridge Univ. Press (1995).

\bibitem{coriasco} S. Coriasco, \emph{Fourier Integral Operators in $\SG$
classes I. Composition Theorems and Action on $\SG$ Sobolev spaces}.
Rend. Sem. Mat. Univ. Politec. Torino \textbf{57} (1999), 4, 249-302 (2002).

\bibitem{coriasco2} S. Coriasco, \emph{Fourier Integral Operators
in $\SG$ Classes II: Application to $\SG$ Hyperbolic Cauchy Problems}.
Ann. Univ. Ferrara, Sez. VII -- Sc. Mat. \textbf{44} (1998), 81--122.

\bibitem{CJT1} S. Coriasco, K. Johansson, J. Toft, \emph{Local
wave-front sets of Banach and Fr{\'e}chet types, and pseudo-differential
operators}. {Monatsh. Math.},
\textbf{169} (2013), 285--316.
 
\bibitem{CJT2} S. Coriasco, K. Johansson, J. Toft, \emph{Global wave-front
sets of Banach, Fr{\'e}chet and Modulation space types, and pseudo-differential
operators}. {J. Differential Equations} \textbf{254} (2013), 3228--3258.

\bibitem{CJT3} S. Coriasco, K. Johansson, J. Toft,
\emph{Global Wave-front Sets of Intersection and Union Type.
{\rm
{in: M. Ruzhansky, V. Turunen (eds)}},
Fourier Analysis - Pseudo-differential Operators, Time-Frequency
Analysis and Partial Differential Equations,}
Trends in Mathematics, Birkh{\"a}user,
Heidelberg NewYork Dordrecht London (2014), pp. 91--106.

\bibitem{CJT4} S. Coriasco, K. Johansson, J. Toft,
\emph{Calculus, continuity and global wave-front properties for Fourier integral operators on $\rr{d}$}.
Preprint arXiv:1307.6249 (2013).

\bibitem{CoMa} S. Coriasco, L. Maniccia, \emph{Wave front set
at infinity and hyperbolic linear operators with multiple characteristics}.
{Ann. Global Anal. Geom.} \textbf{24} (2003), 375--400.

\bibitem{CoMa2} S. Coriasco, L. Maniccia, \emph{On the Spectral Asymptotics of Operators on Manifolds with Ends}. {Abstr. Appl. Anal.} vol. 2013, 
Article ID 909782, DOI 10.1155/2013/909782 (2013).

\bibitem{CoPa} S. Coriasco, P. Panarese, \emph{Fourier Integral Operators Defined by 
Classical Symbols with Exit Behaviour}. {Math. Nachr.} \textbf{242} (2002), 61-78.

\bibitem{CoRo} S. Coriasco, L. Rodino, \emph{Cauchy problem for
$\SG$-hyperbolic equations with constant multiplicities}. Ric. di
Matematica, \textbf{48} (Suppl.) (1999), 25--43.

\bibitem{CoRu} S. Coriasco, M. Ruzhansky, \emph{Global $L^p$-continuity
of Fourier Integral Operators}. 
Trans. Amer. Math. Soc. (2014) DOI 10.1090/S0002-9947-2014-05911-4.

\bibitem{CoSch} S. Coriasco, R. Schulz, \emph{Global Wave Front Set of
Tempered Oscillatory Integrals with Inhomogeneous Phase Functions}.
J. Fourier Anal. Appl. \textbf{19}, 5 (2013), 1093-1121
DOI 10.1007/s00041-013-9283-4.

\bibitem{CoSc14} S. Coriasco, R. Schulz, \emph{$\SG$-Lagrangian submanifolds
and their parametrization}. Preprint arXiv:1406.1888 (2014).

\bibitem{CoTo} S. Coriasco, J. Toft, \emph{Asymptotic expansions
for H{\"o}rmander symbol classes in the calculus of pseudo-differential
operators}. J. Pseudo-Differ. Oper. Appl. \textbf{5}, 1 (2014), 27-41, 
DOI 10.1007/s11868-013-0086-9.

\bibitem{ES97}
Y.~V. Egorov, B.-W. Schulze,
\newblock {\em Pseudo-differential operators, singularities, applications}.
  Volume~93 of {\em Operator Theory: Advances and Applications}.
\newblock Birkh\"auser Verlag, Basel (1997).

\bibitem{F1}  H.~G.~Feichtinger, \emph{Modulation spaces on locally
compact abelian groups. Technical report}, {University of
Vienna}, Vienna, 1983; also in: M. Krishna, R. Radha,
S. Thangavelu (Eds) Wavelets and their applications, Allied
Publishers Private Limited, NewDehli Mumbai Kolkata Chennai Hagpur
Ahmedabad Bangalore Hyderbad Lucknow (2003), pp. 99--140.

\bibitem{Feichtinger3}  {H. G. Feichtinger, K. H. Gr{\"o}chenig},
\emph{Banach spaces related to integrable group representations and
their atomic decompositions, I}. J. Funct. Anal. \textbf{86}
(1989), 307--340.

\bibitem{Feichtinger6} {H. G. Feichtinger, K. H. Gr{\"o}chenig},
\emph{Modulation spaces: Looking back and ahead}.
Sampl. Theory Signal Image Process. \textbf{5} (2006), 109--140.

\bibitem{Fo}  {G. B. Folland}, \emph
{Harmonic analysis in phase space}. {Princeton U. P., Princeton}
(1989).

\bibitem{Gro-book} K. Gr\"{o}chenig, \newblock \emph{Foundations of
Time-Frequency Analysis}.
\newblock Birkh\"auser, Boston (2001).

\bibitem{GT} K. Gr{\"o}chenig, J. Toft, \emph{Isomorphism properties of
Toeplitz operators and pseudo-differential operators between modulation
spaces}. J. Anal. Math. \textbf{114} (2011), 255--283.

\bibitem{Ho1} L. H\"ormander, \emph{The Analysis of Linear
Partial Differential Operators}, vol {I--IV}.
Springer-Verlag, Berlin Heidelberg NewYork Tokyo (1983, 1985).

\bibitem{Ku} H.~Kumano-go,
\emph{Pseudo-Differential Operators}.
\newblock MIT Press (1981).

\bibitem{LuRa} F. Luef, Z. Rahbani, \emph{On pseudodifferential operators with
symbols in generalized Shubin classes and an application to Landay-Weyl operators}.
Banach J. Math. Anal. \textbf{5} (2011), 59--72.

\bibitem{MP02}
L.~Maniccia and P.~Panarese,
\newblock \emph{Eigenvalue asymptotics for a class of md-elliptic {$\psi$}do's on
  manifolds with cylindrical exits}.
\newblock {Ann. Mat. Pura Appl.} \textbf{181}, 3 (2002), 283--308, .

%
\bibitem{Me} R. Melrose, \emph{Geometric scattering theory.} Stanford Lectures.
Cambridge University Press, Cambridge (1995).

%
\bibitem{PA72}
C.~Parenti,
\newblock Operatori pseudodifferenziali in $\mathbb{R}^n$ e applicazioni.
\newblock {\em Ann. Mat. Pura Appl.} \textbf{93} (1972) 359--389.

%
\bibitem{RuSu} M. Ruzhansky, M. Sugimoto,  
      \emph{Global $L^2$ boundedness 
      theorems for a class of Fourier integral operators.} 
      Comm. Partial Differential Equations
      \textbf{31} (2006), 547--569. 

%
\bibitem{Sc} E. Schrohe, \emph{Spaces of weighted symbols and
weighted Sobolev spaces on manifolds} In: H. O. Cordes,
B. Gramsch, and H. Widom (eds), Proceedings, Oberwolfach,
\textbf{1256} Springer LMN, New York, (1986), pp. 360-377.
%

     \bibitem{SSS91} A. Seeger, C.D. Sogge, E.M. Stein, 
     \textit{Regularity
       properties of Fourier integral operators.} Ann. of Math.
       \textbf{134} (1991), 231--251.


\bibitem{To8} J. Toft, \emph{Continuity
properties for modulation spaces with applications to
pseudo-differential calculus, II}. {Ann. Global Anal. Geom.}
\textbf{26} (2004), 73--106.

\bibitem{To10} J. Toft, \emph{Schatten-von Neumann properties in the
Weyl calculus, and calculus of metrics on symplectic vector spaces}.
Ann. Glob. Anal. and Geom. \textbf{30} (2006), 169--209.

\end{thebibliography}
\end{document}